\newcommand{\Title}[1]{\bigskip\bigskip\centerline{\bf #1}\bigskip}
\newcommand{\Author}[1]{\medskip\centerline{ \it #1}}
\newcommand{\Affiliation}[1]{\medskip\centerline{#1}}
\newcommand{\Email}[1]{\medskip\centerline{#1}\bigskip}
\begin{document}

\newcommand{\N}{\mbox {$\mathbb N $}}
\newcommand{\Z}{\mbox {$\mathbb Z $}}
\newcommand{\Q}{\mbox {$\mathbb Q $}}
\newcommand{\R}{\mbox {$\mathbb R $}}
\newcommand{\lo }{\longrightarrow }
\newcommand{\ul}{\underleftarrow }
\newcommand{\rl}{\underrightarrow }
\newcommand{\rs }{\rightsquigarrow }
\newcommand{\ra }{\rightarrow }
\newcommand{\dd }{\rightsquigarrow }
\newcommand{\ol }{\overline }
\newcommand{\la }{\langle }
\newcommand{\tr }{\triangle }
\newcommand{\xr }{\xrightarrow }
\newcommand{\de }{\delta }
\newcommand{\pa }{\partial }
\newcommand{\LR }{\Longleftrightarrow }
\newcommand{\Ri }{\Rightarrow }
\newcommand{\va }{\varphi }
\newcommand{\Den}{{\rm Den}\,}
\newcommand{\Ker}{{\rm Ker}\,}
\newcommand{\Reg}{{\rm Reg}\,}
\newcommand{\Fix}{{\rm Fix}\,}
\newcommand{\Sup}{{\rm Sup}\,}
\newcommand{\Inf}{{\rm Inf}\,}
\newcommand{\Img}{{\rm Im}\,}
\newcommand{\Id}{{\rm Id}\,}
\newcommand{\ord}{{\rm ord}\,}

\newtheorem{theorem}{Theorem}[section]
\newtheorem{lemma}[theorem]{Lemma}
\newtheorem{proposition}[theorem]{Proposition}
\newtheorem{corollary}[theorem]{Corollary}
\newtheorem{definition}[theorem]{Definition}
\newtheorem{example}[theorem]{Example}
\newtheorem{examples}[theorem]{Examples}
\newtheorem{xca}[theorem]{Exercise}
\theoremstyle{remark}
\newtheorem{remark}[theorem]{Remark}
\newtheorem{remarks}[theorem]{Remarks}
\numberwithin{equation}{section}

\def\leftmark{L.C. Ciungu}

\Title{IMPLICATIVE-ORTHOMODULAR ALGEBRAS} 
\title[Implicative-orthomodular algebras]{}
                                                                           
\Author{\textbf{LAVINIA CORINA CIUNGU}}
\Affiliation{Department of Mathematics} 
\Affiliation{St Francis College}
\Affiliation{180 Remsen Street, Brooklyn Heights, NY 11201-4398, USA}
\Email{lciungu@sfc.edu}

\begin{abstract} 
Starting from involutive BE algebras, we redefine the orthomodular algebras, by introducing the notion of 
implicative-orthomodular algebras. 
We investigate properties of implicative-orthomodular algebras, and give characterizations of these algebras.   
Then we define and study the notions of filters and deductive systems, and characterize certain classes of filters. 
Furthermore, we introduce and characterize the commutative deductive systems in implicative-orthomodular algebras. 
We also show that any deductive system determines a congruence, and conversely, for any congruence we can define a deductive system in an implicative-orthomodular algebra. 
We define the quotient implicative-orthomodular algebra with respect to the congruence induced by a deductive system, 
and prove that a deductive system is commutative if and only if all deductive systems of the corresponding quotient algebra are commutative. \\

\textbf{Keywords:} {implicative-orthomodular algebra, quantum-Wajsberg algebra, filter, deductive system, congruence} \\
\textbf{AMS classification (2020):} 06F35, 03G25, 06A06, 81P10, 06C15
\end{abstract}

\maketitle

\section{Introduction} 

In the last decades, the study of algebraic structures related to the logical foundations of quantum mechanics 
became a central topic of research. Generally known as quantum structures, these algebras serve as algebraic 
semantics for the classical and non-classical logics, as well as for the quantum logics. 
As algebraic structures connected with quantum logics we mention the following algebras: bounded involutive lattices,  De Morgan algebras, ortholattices, orthomodular lattices, MV algebras, quantum MV algebras, orthomodular algebras.  
The quantum-MV algebras (or QMV algebras) were introduced by R. Giuntini in \cite{Giunt1} 
as non-lattice generalizations of MV algebras and as non-idempotent generalizations of orthomodular lattices. 
These structures were intensively studied by R. Giuntini (\cite{Giunt2, Giunt3, Giunt4, Giunt5, Giunt6}), 
A. Dvure\v censkij and S. Pulmannov\'a (\cite{DvPu}), R. Giuntini and S. Pulmannov\'a (\cite{Giunt7}) and by 
A. Iorgulescu in \cite{Ior30, Ior31, Ior32, Ior33, Ior34, Ior35}. 
An extensive study on the orthomodular structures as quantum logics can be found in \cite{Ptak}. \\
A. Iorgulescu introduced and studied the m-BE algebras (\cite{Ior30}) and the orthomodular algebras as 
generalizations of quantum-MV algebras (\cite{Ior31}). 
A left-m-BE algebra is an algebra $(X,\odot,^{*},1)$ of type $(2,1,0)$ satisfying the following properties: 
(PU) $1\odot x=x=x\odot 1$, (Pcomm) $x\odot y=y\odot x$, (Pass) $x\odot (y\odot z)=(x\odot y)\odot z$, 
(m-L) $x\odot 0=0$, (m-Re) $x\odot x^{*}=0$, where $0:=1^*$. 
A left-orthomodular algebra (left-OM algebra for short) is an involutive left-m-BE algebra satisfying 
condition (Pom) $(x\odot y)\oplus ((x\odot y)^*\odot x)=x$, where $x\oplus y=(x^*\odot y^*)^*$. 
The right-m-BE algebras and right-orthomodular algebras (right-OM algebras for short) are dually defined. 
The orthomodular algebras (OM algebras for short) were defined as left-OM algebras. 
We redefined in \cite{Ciu78} the quantum-MV algebras as involutive BE algebras, by introducing the notion of quantum-Wajsberg algebras (or QW algebras for short) and studied these structures. 
The implicative-ortholattices (also particular involutive BE algebras, introduced and studied in 
\cite{Ior30, Ior35}) are definitionally equivalent to ortholattices (redefined as particular involutive m-BE algebras in \cite{Ior30, Ior35}), and the implicative-Boolean algebras (introduced by A. Iorgulescu in 2009, also as particular involutive BE algebras, namely as particular involutive BCK algebras) are definitionally equivalent to Boolean algebras (redefined as particular involutive m-BE algebras, namely as particular (involutive) m-BCK algebras in \cite{Ior35}). \\
In this paper, we redefine the orthomodular algebras, by introducing the notion of implicative-orthomodular algebras, and we show that the almost all the properties verified by quantum-Wajsberg algebras are also verified by implicative-orthomodular algebras. 
We prove that any quantum-Wajsberg algebra is an implicative-orthomodular algebra, but the converse is not true. 
We characterize the implicative-orthomodular algebras and we give conditions for implicative-orthomodular algebras to be quantum-Wajsberg algebras. 
We also introduce and study the notions of filters and deductive systems in implicative-orthomodular algebras, as well as the maximal and strongly maximal filters, and we show that a strongly maximal filter is maximal. 
Furthermore, we define and characterize the commutative deductive systems, and we prove that any 
deductive system of a quantum-Wajsberg algebra is commutative. 
We also show that any deductive system determines a congruence, and conversely, for any congruence we can define a deductive system in an implicative-orthomodular algebra. 
We define the quotient implicative-orthomodular algebra with respect to the congruence induced by a 
deductive system, and prove that a deductive system is commutative if and only if all deductive 
systems of the corresponding quotient algebra are commutative. 
We mention that the filters and congruences in quantum-Wajsberg algebras have been studied in \cite{Ciu79}, and we will 
show that certain results proved for quantum-Wajsberg algebras are also valid in the case of implicative-orthomodular algebras. 
Additionally, we prove new properties of involutive BE algebras.

$\vspace*{1mm}$

\section{Preliminaries}

In this section, we recall some basic notions and results regarding BE algebras and quantum-Wajsberg algebras   
that will be used in the paper. Additionally, we prove new properties of involutive BE algebras. 
For more details on quantum-Wajsberg algebras we refer the reader to \cite{Ciu78}. 

The \emph{BE algebras} were introduced in \cite{Kim1} as algebras $(X,\ra,1)$ of type $(2,0)$ satisfying the 
following conditions, for all $x,y,z\in X$: 
$(BE_1)$ $x\ra x=1;$ 
$(BE_2)$ $x\ra 1=1;$ 
$(BE_3)$ $1\ra x=x;$ 
$(BE_4)$ $x\ra (y\ra z)=y\ra (x\ra z)$. 
A relation $\le$ is defined on $X$ by $x\le y$ iff $x\ra y=1$. 
A BE algebra $X$ is \emph{bounded} if there exists $0\in X$ such that $0\le x$, for all $x\in X$. 
In a bounded BE algebra $(X,\ra,0,1)$ we define $x^*=x\ra 0$, for all $x\in X$. 
A bounded BE algebra $X$ is called \emph{involutive} if $x^{**}=x$, for any $x\in X$. \\
Note that, according to \cite[Cor. 17.1.3]{Ior35}, the involutive BE algebras are definitionally equivalent to involutive m-BE algebras, by the mutually inverse transformations (\cite{Ior30, Ior35}): \\ 
$\hspace*{3cm}$ $\Phi:$\hspace*{0.2cm}$ x\odot y:=(x\ra y^*)^*$ $\hspace*{0.1cm}$ and  
                $\hspace*{0.1cm}$ $\Psi:$\hspace*{0.2cm}$ x\ra y:=(x\odot y^*)^*$.

\begin{lemma} \label{qbe-10} $\rm($\cite{Ciu78}$\rm)$ 
Let $(X,\ra,1)$ be a BE algebra. The following hold for all $x,y,z\in X$: \\
$(1)$ $x\ra (y\ra x)=1;$ \\
$(2)$ $x\le (x\ra y)\ra y$. \\
If $X$ is bounded, then: \\
$(3)$ $x\ra y^*=y\ra x^*;$ \\
$(4)$ $x\le x^{**}$. \\
If $X$ is involutive, then: \\
$(5)$ $x^*\ra y=y^*\ra x;$ \\
$(6)$ $x^*\ra y^*=y\ra x;$ \\
$(7)$ $(x\ra y)^*\ra z=x\ra (y^*\ra z);$ \\
$(8)$ $x\ra (y\ra z)=(x\ra y^*)^*\ra z;$ \\   
$(9)$ $(x^*\ra y)^*\ra (x^*\ra y)=(x^*\ra x)^*\ra (y^*\ra y)$.  
\end{lemma}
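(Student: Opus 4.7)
The plan is to prove the nine identities in order, using each as needed for the subsequent ones. The primary engine is the exchange axiom $(BE_4)$; the auxiliary devices are the definition $x^* = x \to 0$, involutivity $x^{**}=x$, and the identities $x\to x=1$, $x\to 1=1$, $1\to x=x$.

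For (1), I would apply $(BE_4)$ to swap $x$ and $y$: $x\to(y\to x) = y\to(x\to x) = y\to 1 = 1$. For (2), reducing $x\to((x\to y)\to y)$ via $(BE_4)$ yields $(x\to y)\to(x\to y)=1$, which says exactly $x\le (x\to y)\to y$. For (3), I just expand $y^* = y\to 0$, apply $(BE_4)$, and fold back: $x\to y^* = x\to(y\to 0) = y\to(x\to 0) = y\to x^*$. For (4), similarly, $x\to x^{**} = x\to(x^*\to 0) = x^*\to(x\to 0) = x^*\to x^* = 1$.

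For the involutive identities (5) and (6), I would leverage (3). For (5), use $y=y^{**}$ and apply (3): $x^*\to y = x^*\to y^{**} = y^*\to x^{**} = y^*\to x$. Part (6) is the same trick: $x^*\to y^* = y\to x^{**} = y\to x$. For (7), I would chain (5) with the exchange axiom. First, by (5), $(x\to y)^*\to z = z^*\to(x\to y)$; then by $(BE_4)$, this equals $x\to(z^*\to y)$; finally, another application of (5) gives $x\to(y^*\to z)$. Then (8) is an immediate corollary of (7): replace $y$ by $y^*$ and use $y^{**}=y$.

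The main obstacle, and the only step that is not a one-line manipulation, is (9). The strategy is to reduce both sides to the common expression $x^*\to(x^*\to(y^*\to y))$ via (7). On the left, apply (7) with the same $x^*$ and with $z = x^*\to y$ to rewrite
\[
(x^*\to y)^*\to(x^*\to y) = x^*\to\bigl(y^*\to(x^*\to y)\bigr),
\]
then swap inside with $(BE_4)$ to get $x^*\to(x^*\to(y^*\to y))$. On the right, applying (7) in the form $(x^*\to x)^*\to(y^*\to y) = x^*\to(x^*\to(y^*\to y))$ gives exactly the same thing. Matching the two yields (9). The delicate bookkeeping in (9) is to track where each $*$ migrates, but once (7) is available, every transformation is forced.
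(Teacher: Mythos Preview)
Your proof is correct in every step. Each of the nine identities is handled cleanly: the uses of $(BE_4)$ in $(1)$--$(4)$ are routine, the derivations of $(5)$ and $(6)$ from $(3)$ via involutivity are sound, the chain $(x\ra y)^*\ra z = z^*\ra(x\ra y) = x\ra(z^*\ra y) = x\ra(y^*\ra z)$ for $(7)$ is valid, $(8)$ follows immediately, and your reduction of both sides of $(9)$ to $x^*\ra(x^*\ra(y^*\ra y))$ via $(7)$ and $(BE_4)$ is correct.

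As to comparison: the paper does not supply its own proof of this lemma. It is stated with a citation to \cite{Ciu78} and left unproved in the present text, so there is nothing in the paper to compare your argument against. Your self-contained derivation is entirely adequate and would serve as a proof here; in particular, your treatment of $(9)$ --- isolating $(7)$ as the key device and applying it once on each side --- is the natural approach and is likely what the cited reference does as well.
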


\noindent
In a BE algebra $X$, we define the additional operation: \\
$\hspace*{3cm}$ $x\Cup y=(x\ra y)\ra y$. \\
If $X$ is involutive, we define the operations: \\
$\hspace*{3cm}$ $x\Cap y=((x^*\ra y^*)\ra y^*)^*$, \\
$\hspace*{3cm}$ $x\odot y=(x\ra y^*)^*=(y\ra x^*)^*$, \\
and the relation $\le_Q$ by: \\
$\hspace*{3cm}$ $x\le_Q y$ iff $x=x\Cap y$. \\
We can easily check that the operation $\odot$ is commutative and associative. 

\begin{proposition} \label{qbe-20} $\rm($\cite{Ciu78}$\rm)$ Let $X$ be an involutive BE algebra. 
Then the following hold for all $x,y,z\in X$: \\
$(1)$ $x\le_Q y$ implies $x=y\Cap x$ and $y=x\Cup y;$ \\
$(2)$ $\le_Q$ is reflexive and antisymmetric; \\
$(3)$ $x\Cap y=(x^*\Cup y^*)^*$ and $x\Cup y=(x^*\Cap y^*)^*;$ \\ 
$(4)$ $x\le_Q y$ implies $x\le y;$ \\
$(5)$ $0\le_Q x \le_Q 1;$ \\
$(6)$ $0\Cap x=x\Cap 0=0$ and $1\Cap x=x\Cap 1=x;$ \\
$(7)$ $(x\Cap y)\ra z=(y\ra x)\ra (y\ra z);$ \\
$(8)$ $z\ra (x\Cup y)=(x\ra y)\ra (z\ra y);$ \\
$(9)$ $x\Cap y\le x,y\le x\Cup y;$ \\
$(10)$ $x\Cap (y\Cap x)=y\Cap x$ and $x\Cap (x\Cap y)=x\Cap y$. 
\end{proposition}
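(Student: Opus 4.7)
The plan is to verify the ten items in an order where later ones can lean on earlier ones, working throughout with the two rewritings
\[
x\Cap y=((x^*\ra y^*)\ra y^*)^*=(y\ra x)\odot y
\]
(the second via Lemma~\ref{qbe-10}(6) and the definition of $\odot$) and the exponential law $(a\odot b)\ra c=a\ra(b\ra c)$, which follows from Lemma~\ref{qbe-10}(7) after replacing $b$ by $b^*$ and using involutivity.

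I would begin with (3), the De Morgan duality, which is immediate from the definitions and $u^{**}=u$: $(x^*\Cup y^*)^*=((x^*\ra y^*)\ra y^*)^*=x\Cap y$, and the companion identity follows by the substitution $x\mapsto x^*,y\mapsto y^*$ followed by another $^*$. Next I would knock off the straightforward computations (6) and (5): using $0^*=1,1^*=0,(BE_1),(BE_2),(BE_3)$ one checks for instance $0\Cap x=((1\ra x^*)\ra x^*)^*=(x^*\ra x^*)^*=1^*=0$ and $1\Cap x=((0\ra x^*)\ra x^*)^*=(1\ra x^*)^*=x$, with the remaining three analogous. These give $0\le_Q x\le_Q 1$.

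For (7), I would rewrite $x\Cap y=(y\ra x)\odot y$ and then apply the exponential law to obtain $(x\Cap y)\ra z=((y\ra x)\odot y)\ra z=(y\ra x)\ra(y\ra z)$. Identity (8) is literally an instance of $(BE_4)$ after unfolding $x\Cup y=(x\ra y)\ra y$. With (7) and (8) available, (9) is immediate: $(x\Cap y)\ra x=(y\ra x)\ra(y\ra x)=1$ and $(x\Cap y)\ra y=(y\ra x)\ra(y\ra y)=1$, and dually $x\ra(x\Cup y)=(x\ra y)\ra(x\ra y)=1$ and $y\ra(x\Cup y)=(x\ra y)\ra(y\ra y)=1$. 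Then (4) will drop out from (1): combining $x\le x\Cup y$ (from (9)) with $y=x\Cup y$ (from (1)) gives $x\le y$.

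The hard step, and the main obstacle, is (1). Assume $x=x\Cap y$, i.e.\ $x^*=(x^*\ra y^*)\ra y^*=(y\ra x)\ra y^*$, using Lemma~\ref{qbe-10}(6). To derive $x=y\Cap x$, I must show $x^*=(x\ra y)\ra x^*$; to derive $y=x\Cup y$, I must show $y=(x\ra y)\ra y$, equivalently (via (3)) $y^*=x^*\Cap y^*=((x\ra y)\ra y)^*$. Both reductions are pushed through by systematically rewriting via Lemma~\ref{qbe-10}(3),(5),(6),(7),(8) to migrate starred factors across $\ra$; the key move is transforming the hypothesis $x^*=(y\ra x)\ra y^*$ with Lemma~\ref{qbe-10}(3) into $x^*=y\ra(y\ra x)^*$ and chaining this with Lemma~\ref{qbe-10}(8) to reach the desired forms. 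This item is where careful bookkeeping of the involutive BE-algebra identities matters most.

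Finally, (2) and (10) follow cleanly. Reflexivity of $\le_Q$ is $x\Cap x=((x^*\ra x^*)\ra x^*)^*=(1\ra x^*)^*=x$; antisymmetry follows by using (1) twice, since $x\le_Q y$ and $y\le_Q x$ give $x=y\Cap x$ and $y=y\Cap x$, hence $x=y$. For (10), I invoke (9) together with $1\odot z=z$: $x\Cap(y\Cap x)=((y\Cap x)\ra x)\odot(y\Cap x)=1\odot(y\Cap x)=y\Cap x$, and $x\Cap(x\Cap y)=((x\Cap y)\ra x)\odot(x\Cap y)=1\odot(x\Cap y)=x\Cap y$. This closes the proof.
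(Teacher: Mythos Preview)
Your overall scheme is sound, and the computations for items (3), (5)--(10), and (2) are correct as written. The paper itself does not prove this proposition (it is quoted from \cite{Ciu78} without argument), so there is no reference proof to compare against; I can only assess correctness.

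The one weak spot is your handling of (1), which you flag as ``the hard step'' but leave as a vague sketch about migrating starred factors via Lemma~\ref{qbe-10}(3),(8). In fact (1) is a two-line computation once you notice that the hypothesis $x\le_Q y$ already forces $x\ra y=1$: writing $x^*=(x^*\ra y^*)\ra y^*$ and using Lemma~\ref{qbe-10}(6) together with $(BE_4)$,
\[
x\ra y \;=\; y^*\ra x^* \;=\; y^*\ra\bigl((x^*\ra y^*)\ra y^*\bigr)
\;=\;(x^*\ra y^*)\ra(y^*\ra y^*)\;=\;(x^*\ra y^*)\ra 1\;=\;1.
\]
From $x\ra y=1$ both conclusions of (1) are immediate: $y\Cap x=((x\ra y)\ra x^*)^*=(1\ra x^*)^*=x$ and $x\Cup y=(x\ra y)\ra y=1\ra y=y$. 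This also yields (4) directly, so your detour through (9) and (1) to reach (4) is unnecessary. Your hints are not wrong---they can be pushed through to the same conclusion---but they obscure the simple mechanism and, as written, do not constitute a proof. With this step made explicit, the rest of your argument goes through cleanly.
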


\begin{proposition} \label{qbe-30} Let $X$ be an involutive BE algebra. 
Then the following hold for all $x,y,z\in X$: \\
$(1)$ $x, y\le_Q z$ and $z\ra x=z\ra y$ imply $x=y;$ \emph{(cancellation law)} \\   
$(2)$ $(x\ra (y\ra z))\ra x^*=((y\ra z)\Cap x)^*;$ \\
$(3)$ $x\ra ((y\ra x^*)^*\Cup z)=y\Cup (x\ra z);$ \\
$(4)$ $((y\ra x)\Cap z)\ra x=y\Cup (z\ra x);$ \\
$(5)$ $x\le_Q y$ implies $(y\ra x)\odot y=x;$ \\ 
$(6)$ $x\ra (z\odot y^*)=((z\ra y)\odot x)^*;$ \\
$(7)$ $(x\Cup y)\Cap y=y$ and $(x\Cap y)\Cup y=y$.   
\end{proposition}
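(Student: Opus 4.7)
The plan is to unfold the derived operations $\Cap$, $\Cup$, $\odot$ via their definitions and rewrite each target expression using the identities from Lemma~\ref{qbe-10} (especially (3), (6), (7)) together with Proposition~\ref{qbe-20} (especially (7), (8)). The single most useful auxiliary identity is obtained by applying Lemma~\ref{qbe-10}(6) inside the definition of $\Cap$: since $x^*\ra y^*=y\ra x$, we have
\[
x\Cap y=((y\ra x)\ra y^*)^*,
\]
so the relation $x\le_Q y$ translates to the single equation $x=((y\ra x)\ra y^*)^*$.

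This reformulation immediately dispatches parts (1) and (5). For (1), writing both $x$ and $y$ in the form $((z\ra (\cdot))\ra z^*)^*$ with the inner slot occupied by $z\ra x=z\ra y$ forces $x=y$. For (5), the expression $(y\ra x)\odot y$ unfolds by the definition of $\odot$ to $((y\ra x)\ra y^*)^*$, which by the reformulation is exactly $x\Cap y$, hence equals $x$ under the hypothesis $x\le_Q y$. Parts (2) and (6) are even more direct: (2) follows by applying the $\Cap$-reformulation to $(y\ra z)\Cap x$; in (6), the observation $z\odot y^*=(z\ra y)^*$ reduces both sides to $(z\ra y)\ra x^*$ via Lemma~\ref{qbe-10}(3).

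For (3), I first apply Lemma~\ref{qbe-10}(7) to rewrite $(y\ra x^*)^*\ra z$ as $y\ra(x\ra z)$, expand the outer $\Cup$, and then swap arguments by $(BE_4)$ to obtain $y\Cup(x\ra z)$. Part (4) is analogous: Proposition~\ref{qbe-20}(7) applied to $(y\ra x)\Cap z$ yields $(z\ra(y\ra x))\ra(z\ra x)$, and one more $(BE_4)$-swap lands in $y\Cup(z\ra x)$. For (7), both identities collapse once the right lemma is invoked: Proposition~\ref{qbe-20}(7) gives $(x\Cap y)\ra y=(y\ra x)\ra 1=1$, whence $(x\Cap y)\Cup y=1\ra y=y$; dually, combining the $\Cap$-reformulation with Proposition~\ref{qbe-20}(8) gives $y\ra(x\Cup y)=(x\ra y)\ra 1=1$, so $(x\Cup y)\Cap y=(1\ra y^*)^*=y$. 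The calculations themselves are mechanical; the only delicate step is selecting, in each part, the correct sequence of symmetry identities from Lemma~\ref{qbe-10} to expose the desired pattern.
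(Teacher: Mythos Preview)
Your proposal is correct and follows essentially the same route as the paper: both proofs unfold $\Cap$, $\Cup$, $\odot$ and reduce each identity via Lemma~\ref{qbe-10} and $(BE_4)$. Your treatment of (4) and (7) is slightly more economical---you invoke Proposition~\ref{qbe-20}(7),(8) directly, whereas the paper re-derives those instances inline (and for (4) detours through the dual form $x^*\ra((y\ra x)^*\Cup z^*)$ before arriving at the same $(y\ra(z\ra x))\ra(z\ra x)$)---but the underlying computations coincide.
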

\begin{proof} 
$(1)$ Since $x,y\le_Q z$ and $z\ra x=z\ra y$, we have: \\
$\hspace*{2.00cm}$ $x=x\Cap z=((x^*\ra z^*)\ra z^*)^*=((z\ra x)\ra z^*)^*$ \\
$\hspace*{2.30cm}$ $=((z\ra y)\ra z^*)^*=((y^*\ra z^*)\ra z^*)^*=y\Cap z=y$. \\
$(2)$ Applying Lemma \ref{qbe-10}, we get: \\
$\hspace*{1.00cm}$ $((y\ra z)\Cap x)^*=(y\ra z)^*\Cup x^*=((y\ra z)^*\ra x^*)\ra x^*=(x\ra (y\ra z))\ra x^*$. \\
$(3)$ Using Lemma \ref{qbe-10}$(8)$, we have \\
$\hspace*{1.00cm}$ $x\ra ((y\ra x^*)^*\Cup z)=x\ra (((y\ra x^*)^*\ra z)\ra z)$ \\
$\hspace*{4.50cm}$ $=((y\ra x^*)^*\ra z)\ra (x\ra z)$ \\
$\hspace*{4.50cm}$ $=(y\ra (x\ra z))\ra (x\ra z)=y\Cup (x\ra z)$. \\ 
$(4)$ Similarly we have: \\
$\hspace*{1.00cm}$ $((y\ra x)\Cap z)\ra x=x^*\ra ((y\ra x)^*\Cup z^*)=x^*\ra (((y\ra x)^*\ra z^*)\ra z^*)$ \\
$\hspace*{4.00cm}$ $=((y\ra x)^*\ra z^*)\ra (x^*\ra z^*)=(z\ra (y\ra x))\ra (z\ra x)$ \\
$\hspace*{4.00cm}$ $=(y\ra (z\ra x))\ra (z\ra x)=y\Cup (z\ra x)$. \\
$(5)$ Since $x\le_Q y$, we have: \\
$\hspace*{2cm}$ $(y\ra x)\odot y=((y\ra x)\ra y^*)^*=((x^*\ra y^*)\ra y^*)^*=x\Cap y=x$. \\
$(6)$ Taking into consideration that $x\ra y=(x\odot y^*)^*$, we get: \\
$\hspace*{2cm}$ $x\ra (z\odot y^*)=(x\odot (z\odot y^*)^*)^*=(x\odot (z\ra y))^*$. \\
$(7)$ We have: \\ 
$\hspace*{2.00cm}$ $(x\Cup y)\Cap y=(((x\Cup y)^*\ra y^*)\ra y^*)^*$ \\
$\hspace*{4.00cm}$ $=((y\ra (x\Cup y))\ra y^*)^*$ \\
$\hspace*{4.00cm}$ $=((y\ra ((x\ra y)\ra y))\ra y^*)^*$ \\
$\hspace*{4.00cm}$ $=(((x\ra y)\ra (y\ra y))\ra y^*)^*=(y^*)^*=y$. \\
Similarly $(x\Cap y)\Cup y=y$.    
\end{proof}

\noindent 
\emph{Quantum-Wajsberg algebras} (\emph{QW algebras, for short}) were defined and studied in \cite{Ciu78} 
as involutive BE algebras $X$ satisfying the following axiom for all $x,y,z\in X$: \\
(QW) $x\ra ((x\Cap y)\Cap (z\Cap x))=(x\ra y)\Cap (x\ra z)$. 

\noindent
Condition (QW) is equivalent to the following conditions: \\
($QW_1$) $x\ra (x\Cap y)=x\ra y;$ \\ 
($QW_2$) $x\ra (y\Cap (z\Cap x))=(x\ra y)\Cap (x\ra z)$. 

\begin{proposition} \label{qbe-60} $\rm($\cite{Ciu78}$\rm)$ Let $X$ be a quantum-Wajsberg algebra. 
The following hold for all $x,y\in X$:\\
$(1)$ $x\ra (y\Cap x)=x\ra y$ and $(x\ra y)\ra (y\Cap x)=x;$ \\
$(2)$ $x\le_Q x^*\ra y$ and $x\le_Q y\ra x;$ \\
$(3)$ $x\ra y=0$ iff $x=1$ and $y=0;$ \\
$(4)$ $(x\ra y)^*\Cap x=(x\ra y)^*;$ \\ 
$(5)$ $(x\Cap y)\Cap y=x\Cap y$ and $(x\Cup y)\Cup y=x\Cup y;$ \\
$(6)$ $x\Cup (y\Cap x)=x$ and $x\Cap (y\Cup x)=x;$ \\
$(7)$ $x\Cap y\le_Q y\le_Q x\Cup y;$ \\
$(8)$ $(x\Cup y)\ra x=(y\Cup x)\ra x=y\ra x;$ \\
$(9)$ $(x\Cup y)\ra y=(y\Cup x)\ra y=x\ra y;$ \\
$(10)$ $x\le y$ iff $y\Cap x=x$.      
\end{proposition}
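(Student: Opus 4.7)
The plan is to dispatch each of the ten items sequentially, leaning on the axiom (QW) together with its equivalent pair $(QW_1)$ and $(QW_2)$, the involutive BE calculus of Lemma \ref{qbe-10}, and the general identities of Propositions \ref{qbe-20} and \ref{qbe-30}. A repeatedly useful preliminary observation is the idempotence
\[
x\Cap x=((x^{*}\ra x^{*})\ra x^{*})^{*}=(1\ra x^{*})^{*}=x.
\]

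The technical core is (1). The first identity $x\ra(y\Cap x)=x\ra y$ is obtained by specialising $(QW_2)$ to $z=x$: the left-hand side becomes $x\ra(y\Cap(x\Cap x))=x\ra(y\Cap x)$ by idempotence, while the right-hand side is $(x\ra y)\Cap(x\ra x)=(x\ra y)\Cap 1=x\ra y$ via Proposition \ref{qbe-20}(6). The second identity $(x\ra y)\ra(y\Cap x)=x$, which is the implicative form of the orthomodular law, is the main obstacle. I would rewrite $y\Cap x=((x\ra y)\ra x^{*})^{*}$ using Lemma \ref{qbe-10}(6), then reduce $(x\ra y)\ra((x\ra y)\ra x^{*})^{*}$ by combining Lemma \ref{qbe-10}(3),(5),(7),(8) with involutivity and invoking $(QW_1)$ to collapse the tail.

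With (1) in hand, item (2) is proved by unfolding $x\Cap(x^{*}\ra y)$ through the definition of $\Cap$, simplifying $x^{*}\ra(x^{*}\ra y)^{*}$ via Lemma \ref{qbe-10}(3) to $(x^{*}\ra y)\ra x$, and collapsing the resulting expression by repeated use of Lemma \ref{qbe-10} together with (1); the symmetric $x\le_Q y\ra x$ follows by the same route after applying Lemma \ref{qbe-10}(5). Item (3) is straightforward from Lemma \ref{qbe-10}(1): since $y\le x\ra y$, the hypothesis $x\ra y=0$ forces $y=0$, and then $x\ra 0=x^{*}=0$ gives $x=1$ by involutivity.

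The remaining items are then routine. For (4) use Lemma \ref{qbe-10}(3) to rewrite $(x\ra y)^{*}\Cap x=(((x\ra y)\ra x^{*})\ra x^{*})^{*}$ and then apply (1) combined with $(QW_1)$. Item (5) computes $(x\Cap y)\Cap y$ via the $\Cup$-formula and reduces using Proposition \ref{qbe-30}(7); the dual for $\Cup$ follows through the de Morgan identity of Proposition \ref{qbe-20}(3). Item (6) is absorption, which follows from (5) and the same de Morgan passage. Item (7) is a repackaging of (5), since $x\Cap y\le_Q y$ amounts to $(x\Cap y)\Cap y=x\Cap y$. For (8) and (9) expand $(x\Cup y)\ra x=((x\ra y)\ra y)\ra x$ and apply Proposition \ref{qbe-30}(4) with $z=x\ra y$, using Lemma \ref{qbe-10}(5),(6) to handle the symmetric versions. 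Finally, (10) is immediate: if $x\le y$ then $x\ra y=1$, so $y\Cap x=((x\ra y)\ra x^{*})^{*}=(1\ra x^{*})^{*}=x$; conversely, $y\Cap x=x$ combined with $y\Cap x\le y$ from Proposition \ref{qbe-20}(9) yields $x\le y$.
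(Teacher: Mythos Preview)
The paper does not give its own proof of this proposition; it is quoted verbatim from \cite{Ciu78}. What the present paper \emph{does} contain is an independent derivation of the key step (Theorem~\ref{roma-50}) and the IOM analogues of most items (Propositions~\ref{roma-40}, \ref{roma-80}, \ref{roma-100}), so the comparison must be made against that material.

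Your plan has a genuine gap at the ``technical core''. Following your own rewriting, the second identity of (1) becomes
\[
(x\ra y)\ra(y\Cap x)=(x\ra y)\ra\bigl((x\ra y)\ra x^{*}\bigr)^{*}
=\bigl((x\ra y)\ra x^{*}\bigr)\ra(x\ra y)^{*}
=x\Cup(x\ra y)^{*},
\]
using Lemma~\ref{qbe-10}(3) twice. Showing that this equals $x$ is precisely condition $(IOM^{''})$ of Lemma~\ref{roma-10}, which is equivalent to item~(2). Your sketch then proposes to obtain~(2) ``by repeated use of Lemma~\ref{qbe-10} together with~(1)'', so the argument is circular. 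Moreover, the claim that $(QW_1)$ alone ``collapses the tail'' is not substantiated: $(QW_1)$ says $x\ra(x\Cap y)=x\ra y$, and no combination of it with the involutive BE identities of Lemma~\ref{qbe-10} seems to yield $x\Cup(x\ra y)^{*}=x$ without already knowing (2).

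The fix is the one the paper actually carries out in Theorem~\ref{roma-50}: set $y:=0$ in $(QW_2)$. Since $0\Cap(z\Cap x)=0$ by Proposition~\ref{qbe-20}(6), one gets $x^{*}=x^{*}\Cap(x\ra z)$, i.e.\ $(IOM^{\prime})$, hence item~(2) directly. With (2) in hand, the second identity of~(1) follows exactly as in the paper's Proposition~\ref{roma-40}(2), and your computations for the remaining items then go through. Two small corrections downstream: items~(8) and~(9) do not need Proposition~\ref{qbe-30}(4); they follow immediately from $(QW_1)$ and the first identity of~(1) after writing $(x\Cup y)\ra x=x^{*}\ra(x^{*}\Cap y^{*})$ and $(y\Cup x)\ra x=x^{*}\ra(y^{*}\Cap x^{*})$. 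Item~(5) is not a consequence of Proposition~\ref{qbe-30}(7); it follows from~(4) (equivalently, from $(x\ra y)^{*}\le_Q x$), as in Proposition~\ref{roma-80}(1) or~\ref{roma-100}(3).
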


\begin{proposition} \label{qbe-70} $\rm($\cite{Ciu78}$\rm)$ Let $X$ be a quantum-Wajsberg algebra. 
If $x,y\in X$ such that $x\le_Q y$, then the following hold for any $z\in X$:\\
$(1)$ $y=y\Cup x;$ \\
$(2)$ $y^*\le_Q x^*;$ \\
$(3)$ $y\ra z\le_Q x\ra z$ and $z\ra x\le_Q z\ra y;$ \\
$(4)$ $x\Cap z\le_Q y\Cap z$ and $x\Cup z\le_Q y\Cup z$. 
\end{proposition}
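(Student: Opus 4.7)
My plan is to prove the four items in the stated order, since each builds on earlier ones. The recipe has two key ingredients: the rewriting identities of Prop \ref{qbe-20}(7)--(8), which convert $(x \Cap y) \ra z$ and $z \ra (x \Cup y)$ into nested implications, together with the collapsing identity $a \le_Q c \ra a$ from Prop \ref{qbe-60}(2). The De Morgan laws from Prop \ref{qbe-20}(3) will provide the duality between $\Cap$ and $\Cup$ needed for (2) and (4a).

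For (1), I apply Prop \ref{qbe-60}(6) in the form $y \Cup (x \Cap y) = y$: since $x \le_Q y$ gives $x \Cap y = x$, this collapses to $y \Cup x = y$. Then (2) is immediate by starring (1) and invoking Prop \ref{qbe-20}(3): $y^* = (y \Cup x)^* = y^* \Cap x^*$. For (3a), I use $x = x \Cap y$ and Prop \ref{qbe-20}(7) to write $x \ra z = (y \ra x) \ra (y \ra z)$, whence
$$(y \ra z) \Cap (x \ra z) = (y \ra z) \Cap \bigl((y \ra x) \ra (y \ra z)\bigr);$$
this has the shape $a \Cap (b \ra a)$ and collapses to $a = y \ra z$ by Prop \ref{qbe-60}(2). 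Part (3b) is symmetric: from $y = y \Cup x$ (just proved) and Prop \ref{qbe-20}(8), $z \ra y = (y \ra x) \ra (z \ra x)$, and the same collapse finishes the argument.

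For (4b), I iterate (3a): the given $x \le_Q y$ yields $y \ra z \le_Q x \ra z$, and feeding the new pair back into (3a) with target $z$ produces $(x \ra z) \ra z \le_Q (y \ra z) \ra z$, i.e.\ $x \Cup z \le_Q y \Cup z$. Part (4a) is the dual: by (2), $y^* \le_Q x^*$, by (4b) $y^* \Cup z^* \le_Q x^* \Cup z^*$, and applying (2) again together with Prop \ref{qbe-20}(3) turns this into $x \Cap z \le_Q y \Cap z$. The main obstacle is spotting the rewriting in (3): expanding the outer $\Cap$ via its definition produces an opaque nested formula, whereas once Prop \ref{qbe-20}(7) (or (8)) is applied, the expression acquires the shape $a \Cap (b \ra a)$ and Prop \ref{qbe-60}(2) closes it instantly.
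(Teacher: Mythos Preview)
Your proof is correct. In this paper Proposition~\ref{qbe-70} is merely cited from \cite{Ciu78} without proof, but the paper does prove the identical statements for IOM algebras in Proposition~\ref{roma-30}, and your argument matches that one essentially step for step: parts $(1)$, $(2)$, and $(4)$ are the same (up to the harmless variation that you deduce $(4a)$ from $(4b)$ by duality whereas the paper redoes the iteration of $(3)$ directly), and in $(3)$ you reach the key expression $(y\ra z)\Cap\bigl((y\ra x)\ra(y\ra z)\bigr)$ via the shortcut of Proposition~\ref{qbe-20}(7)--(8), while the paper unfolds the definition of $\Cap$ by hand before arriving at the same line and applying the same collapse $a\Cap(b\ra a)=a$.
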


\begin{proposition} \label{qbe-80} $\rm($\cite{Ciu78}$\rm)$ Let $X$ be a quantum-Wajsberg algebra. 
The following hold, for all $x,y,z\in X$:\\
$(1)$ $(x\Cap y)\Cap (y\Cap z)=(x\Cap y)\Cap z;$ \\
$(2)$ $\le_Q$ is transitive; \\
$(3)$ $x\Cup y\le_Q x^*\ra y;$ \\
$(4)$ $(x^*\ra y)^*\ra (x\ra y^*)^*=x^*\ra y;$ \\
$(5)$ $(x\ra y)^*\ra (y\ra x)^*=x\ra y;$ \\
$(6)$ $(y\ra x)\ra (x\ra y)=x\ra y;$ \\
$(7)$ $(x\ra y)\Cup (y\ra x)=1;$ \\
$(8)$ $(z\Cap x)\ra (y\Cap x)=(z\Cap x)\ra y$.    
\end{proposition}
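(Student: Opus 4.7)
The plan is to prove each of the eight identities by unfolding the definitions of $\Cap$, $\Cup$, and $\odot$ and applying a small toolkit: the involutive BE identities of Lemma~\ref{qbe-10}, the decomposed forms $(QW_1)$ and $(QW_2)$ of axiom (QW), De Morgan duality $\Cap/\Cup$ from Proposition~\ref{qbe-20}(3), the fact that $x\Cap y\le_Q y$ from Proposition~\ref{qbe-60}(7), and earlier identities of Propositions~\ref{qbe-20}, \ref{qbe-60}, and~\ref{qbe-70}. Several items will reduce to one another, so I will not treat them independently.

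First I would tackle (4)--(7) as a block. By Lemma~\ref{qbe-10}(6), $u^*\ra v^*=v\ra u$, so the left-hand side of (5) rewrites as $(y\ra x)\ra(x\ra y)$, making (5) literally the same statement as (6). By Lemma~\ref{qbe-10}(3), identity (4) becomes $(x^*\ra y)^*\ra(y\ra x^*)^*=x^*\ra y$, and after relabeling $u:=x^*$, $v:=y$ this is once again (5). So the real content of the block is (6); I would apply $(BE_4)$ to get $(y\ra x)\ra(x\ra y)=x\ra((y\ra x)\ra y)$ and identify $(y\ra x)\ra y$ modulo $x\ra(-)$ with an expression of the form $x\Cap y$, finishing via $(QW_1)$ together with Proposition~\ref{qbe-60}(1). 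Once (6) is in hand, (7) is immediate: $(x\ra y)\Cup(y\ra x)=((x\ra y)\ra(y\ra x))\ra(y\ra x)=(y\ra x)\ra(y\ra x)=1$, by (6) with the roles of $x$ and $y$ swapped.

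For (1) and (2) I would use De Morgan duality. For (1), set $a:=x\Cap y$; since $a\le_Q y$, Proposition~\ref{qbe-20}(1) gives $a\Cap y=a=y\Cap a$, and I would reduce $a\Cap(y\Cap z)=a\Cap z$ to the $\Cup$-sided statement $a^*\Cup(y^*\Cup z^*)=a^*\Cup z^*$ with $a^*=x^*\Cup y^*$, and close it by repeated application of Proposition~\ref{qbe-20}(7)--(8) to absorb the intermediate $y^*$. For (2), given $x=x\Cap y$ and $y=y\Cap z$, the chain $x\Cap z=(x\Cap y)\Cap z=(x\Cap y)\Cap(y\Cap z)=(x\Cap y)\Cap y=x\Cap y=x$ finishes the argument, using (1) for the second equality and Proposition~\ref{qbe-60}(5) for the third.

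Item (3), $x\Cup y\le_Q x^*\ra y$, amounts to showing $(x\Cup y)\Cap(x^*\ra y)=x\Cup y$, which I plan to verify directly from the definitions while shuttling through Lemma~\ref{qbe-10}(5)--(8); this will be the most calculation-heavy step. Finally, (8) is the cleanest: Proposition~\ref{qbe-20}(7) gives $(z\Cap x)\ra y=(x\ra z)\ra(x\ra y)$ and, applied with $y$ replaced by $y\Cap x$, gives $(z\Cap x)\ra(y\Cap x)=(x\ra z)\ra(x\ra(y\Cap x))$; since $x\ra(y\Cap x)=x\ra y$ by Proposition~\ref{qbe-60}(1), the two sides coincide. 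The main obstacle I anticipate is (6): the QW axiom has to be invoked in its full form, and producing the correct rewrite of $(y\ra x)\ra y$ into a $\Cap$-shape is the one genuinely nontrivial move on which (4)--(7) all rest; (3) is the second most delicate, since comparing a $\Cup$-term with a pure implication does not obviously telescope.
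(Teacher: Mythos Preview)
The paper does not prove Proposition~\ref{qbe-80}; it is quoted from \cite{Ciu78}. The only points of comparison inside this paper are the IOM analogues: items (1), (2), (8) reappear as Proposition~\ref{roma-80}(5),(7) and Proposition~\ref{roma-100}(1). Against those, your plan is largely sound. Your reductions (4)$\Leftrightarrow$(5)$\Leftrightarrow$(6) and (6)$\Rightarrow$(7) are correct, your derivation of (2) from (1) is exactly the paper's argument for Proposition~\ref{roma-80}(7), and your proof of (8) via Proposition~\ref{qbe-20}(7) together with $x\ra(y\Cap x)=x\ra y$ is actually shorter than the paper's computation in Proposition~\ref{roma-100}(1). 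Your plan for (1) is vague but points the right way; the paper (Proposition~\ref{roma-80}(5)) unfolds $(x\Cap y)\Cap(y\Cap z)$, applies $(BE_4)$ and Proposition~\ref{qbe-20}(7) to reach $\bigl(((x\Cap y)^*\ra z^*)\Cap(y^*\ra z^*)\bigr)\ra z^*$, and then uses the key inequality $(x\Cap y)^*\ra z^*\le_Q y^*\ra z^*$ (from $x\Cap y\le_Q y$) to collapse the inner $\Cap$.

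Two places need repair. For (6), applying $(BE_4)$ first to get $x\ra((y\ra x)\ra y)$ stalls: the term $(y\ra x)\ra y$ is not a $\Cap$-expression and does not simplify on its own. The working order is the reverse: rewrite $x\ra y$ as $x\ra(x\Cap y)$ by $(QW_1)$, so $(y\ra x)\ra(x\ra y)=(y\ra x)\ra(x\ra(x\Cap y))=x\ra((y\ra x)\ra(x\Cap y))$ by $(BE_4)$, and now Proposition~\ref{qbe-60}(1) with $x,y$ swapped gives $(y\ra x)\ra(x\Cap y)=y$, so the whole expression is $x\ra y$. For (3), you overestimate the difficulty: from $x\le_Q x^*\ra y$ (Proposition~\ref{qbe-60}(2)) and Proposition~\ref{qbe-70}(4) you get $x\Cup y\le_Q(x^*\ra y)\Cup y$; and since also $y\le_Q x^*\ra y$, Proposition~\ref{qbe-70}(1) gives $(x^*\ra y)\Cup y=x^*\ra y$, which finishes (3) in two lines without any direct unfolding.
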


\noindent
By Propositions \ref{qbe-20}$(2)$, \ref{qbe-80}$(2)$, in a quantum-Wajsberg algebra $X$, $\le_Q$ is a partial 
order on $X$. 

\begin{proposition} \label{qbe-120} $\rm($\cite{Ciu78}$\rm)$   
In any quantum-Wajsberg algebra $X$ the following hold for all $x,y,z\in X$: \\
$(1)$ $x\ra (y\ra z)=(x\odot y)\ra z;$ \\
$(2)$ $x\le_Q y\ra z$ implies $x\odot y\le z;$ \\
$(3)$ $x\odot y\le z$ implies $x\le y\ra z;$ \\
$(4)$ $(x\ra y)\odot x\le y;$ \\
$(5)$ $x\le_Q y$ implies $x\odot z\le_Q y\odot z;$ \\
$(6)$ $x\le_Q y$ implies $(y\ra x)\odot y=x;$ \\ 
$(7)$ $x\ra (z\odot y^*)=((z\ra y)\odot x)^*$. 
\end{proposition}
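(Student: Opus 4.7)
The plan is to settle the seven clauses in the order $(1), (2), (3), (4), (5), (6), (7)$, because (1) turns out to be a master identity from which (2)--(4) follow essentially for free, while (6)--(7) are already on record.

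For (1), I would simply invoke Lemma \ref{qbe-10}(8): $x\ra(y\ra z)=(x\ra y^*)^*\ra z$, and then recognize the right-hand side as $(x\odot y)\ra z$ via the definition $x\odot y=(x\ra y^*)^*$. For (2), from $x\le_Q y\ra z$ Proposition \ref{qbe-20}(4) yields $x\le y\ra z$, i.e.\ $x\ra(y\ra z)=1$; by (1) this is $(x\odot y)\ra z=1$, so $x\odot y\le z$. Clause (3) is the same computation run backwards, and needs only $\le$, not $\le_Q$. For (4), Lemma \ref{qbe-10}(2) gives $x\le (x\ra y)\ra y$, i.e.\ $x\ra((x\ra y)\ra y)=1$; by (1) this rewrites as $(x\odot(x\ra y))\ra y=1$, and commutativity of $\odot$ (noted immediately after Proposition \ref{qbe-20}) converts this to $(x\ra y)\odot x\le y$.

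The only clause demanding any real bookkeeping is (5). I would chain two contravariant monotonicity facts already proved for quantum-Wajsberg algebras: starting from $x\le_Q y$, Proposition \ref{qbe-70}(3) applied with $z$ replaced by $z^*$ gives $y\ra z^*\le_Q x\ra z^*$; Proposition \ref{qbe-70}(2) then reverses this under $*$ to obtain $(x\ra z^*)^*\le_Q (y\ra z^*)^*$, which by definition of $\odot$ is exactly $x\odot z\le_Q y\odot z$. The delicate point is to track the two sign changes so that the final inequality points in the claimed direction; this is the only place where one cannot proceed by inspection.

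Finally, (6) and (7) are nothing new: (6) is Proposition \ref{qbe-30}(5) and (7) is Proposition \ref{qbe-30}(6), both of which hold in any involutive BE algebra and therefore in any quantum-Wajsberg algebra. Altogether, no part of the proposition requires the full force of the (QW) axiom beyond what has already been absorbed into Propositions \ref{qbe-70} and \ref{qbe-30}, so the main conceptual content of the argument is the residuation-style identity (1).
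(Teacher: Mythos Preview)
Your argument is correct. The paper does not actually supply a proof of Proposition~\ref{qbe-120} here---it is quoted from \cite{Ciu78}---but the individual items are reproved elsewhere in the paper, and your treatment matches those proofs almost verbatim: items (1)--(5) appear as Proposition~\ref{roma-100}(4)--(8) for IOM algebras with the same derivations (Lemma~\ref{qbe-10}(8) for (1), the $\le_Q\Rightarrow\le$ passage for (2)--(3), and the contravariance/$*$-reversal chain for (5)), and items (6)--(7) are exactly Proposition~\ref{qbe-30}(5),(6) as you note. The one cosmetic difference is (4): the paper (in Proposition~\ref{roma-100}(7)) applies (2) to the reflexive instance $x\ra y\le_Q x\ra y$, whereas you go through Lemma~\ref{qbe-10}(2) and commutativity of~$\odot$; both are one-step arguments.
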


$\vspace*{1mm}$

\section{Implicative-orthomodular algebras}

In this section, we redefine the orthomodular algebras, by introducing the notion of implicative-orthomodular algebras, and 
we study the properties of these algebras. 
We show that the almost all the properties verified by quantum-Wajsberg algebras are also verified by implicative-orthomodular algebras. 
Furthermore, we prove that any quantum-Wajsberg algebra is an implicative-orthomodular algebra, but the converse is not true. 
We also characterize these algebras and we give conditions for implicative-orthomodular algebras to be quantum-Wajsberg algebras. 

\begin{lemma} \label{roma-10} Let $X$ be an involutive BE algebra. 
The following are equivalent for all $x,y\in X$: \\
$(IOM)$ $x\Cap (y\ra x)=x;$ \\
$(IOM^{'})$ $x\Cap (x^*\ra y)=x;$ $\hspace*{5cm}$ \\
$(IOM^{''})$ $x\Cup (x\ra y)^*=x$. 
\end{lemma}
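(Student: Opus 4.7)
The plan is to establish the chain $(IOM)\Leftrightarrow (IOM^{'})\Leftrightarrow (IOM^{''})$ by direct manipulation, using only the involutivity of $X$ and a few identities from Lemma \ref{qbe-10} and Proposition \ref{qbe-20}. The statement is a universally quantified equivalence in $x,y$, so the strategy is to rewrite one identity into another by substituting variables and exploiting the bijection $x\mapsto x^*$ on $X$.

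For $(IOM)\Leftrightarrow (IOM^{'})$, the key is to note that Lemma \ref{qbe-10}(5) gives $x^*\ra y=y^*\ra x$; replacing $y$ by $y^*$ and using $y^{**}=y$ yields $y\ra x=x^*\ra y^*$. Substituting this into the left-hand side of $(IOM)$ gives $x\Cap(y\ra x)=x\Cap(x^*\ra y^*)$. Since $y\mapsto y^*$ is a bijection on $X$, asserting the first equals $x$ for all $y$ is the same as asserting the second equals $x$ for all $y$, which is exactly $(IOM^{'})$.

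For $(IOM^{'})\Leftrightarrow (IOM^{''})$, I would apply the de Morgan identity $x\Cap u=(x^*\Cup u^*)^*$ from Proposition \ref{qbe-20}(3) with $u=x^*\ra y$. This rewrites $(IOM^{'})$ as $(x^*\Cup(x^*\ra y)^*)^*=x$, and taking the involution of both sides gives the equivalent form $x^*\Cup(x^*\ra y)^*=x^*$. This is precisely $(IOM^{''})$ with $x$ replaced by $x^*$. Because $x\mapsto x^*$ is a bijection on $X$ by involutivity, the universally quantified statement $(IOM^{''})$ is equivalent to its own $x\mapsto x^*$ substitution, and hence to $(IOM^{'})$.

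No real obstacle arises here: the proof is essentially bookkeeping. The only delicate point to track is the role of the universal quantifier: pointwise at a fixed $(x,y)$ the three identities are distinct, and the equivalence only holds once one is allowed to substitute $y\leftrightarrow y^*$ and $x\leftrightarrow x^*$, which is legal precisely because $X$ is involutive. No properties beyond involutivity and the two identities just cited are needed, so the lemma holds in the full generality of involutive BE algebras (no further axiom such as $(QW)$ or $(Pom)$ is invoked).
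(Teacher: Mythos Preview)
Your proposal is correct and is exactly the kind of routine variable-substitution argument the paper has in mind when it writes ``The proof is straightforward.'' The only ingredients are involutivity, the identity $y\ra x=x^*\ra y^*$ from Lemma~\ref{qbe-10}(5), and the de Morgan law $x\Cap u=(x^*\Cup u^*)^*$ from Proposition~\ref{qbe-20}(3), together with the bijections $y\mapsto y^*$ and $x\mapsto x^*$; nothing is missing.
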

\begin{proof}
The proof is straightforward. 
\end{proof}

\begin{definition} \label{roma-20} \emph {
An involutive BE algebra $X$ satisfying the equivalent conditions from Lemma \ref{roma-10} is called an 
\emph{implicative-orthomodular algebra} (\emph{IOM algebra}, for short). 
}
\end{definition}

\begin{remark} \label{roma-20-10}
By Proposition \ref{qbe-60}$(2)$, any QW algebra is an IOM algebra. 
\end{remark}

\begin{proposition} \label{roma-20-20} Implicative-orthomodular algebras are definitionally equivalent to orthomodular algebras. 
\end{proposition}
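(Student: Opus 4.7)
The strategy is to piggyback on the definitional equivalence between involutive BE algebras and involutive m-BE algebras recalled from \cite[Cor. 17.1.3]{Ior35}, via the mutually inverse transformations $\Phi$ and $\Psi$. With that equivalence already in hand, it suffices to verify that, under the translation, axiom (Pom) defining orthomodular algebras becomes one of the equivalent forms of (IOM) on the BE side. Since Lemma \ref{roma-10} tells us $(IOM)$, $(IOM')$ and $(IOM'')$ are all equivalent, I am free to land on any one of them; the cleanest target turns out to be $(IOM'')$.

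The concrete plan is to rewrite the left-hand side of (Pom), namely $(x\odot y)\oplus((x\odot y)^*\odot x)$, purely in the $\ra$-language. Setting $a=x\odot y=(x\ra y^*)^*$ and unfolding $\oplus$ through $p\oplus q=(p^*\odot q^*)^*$, two ingredients do the real work: the identity $(p\odot q)^*=p\ra q^*$ coming directly from $\Phi$, and the symmetries $p^*\ra q^*=q\ra p$ and $p^*\ra q=q^*\ra p$ from Lemma \ref{qbe-10}(5)--(6). A short chain of substitutions collapses the expression to $(x\ra a)\ra a=x\Cup a$, so (Pom) is equivalent to the identity $x\Cup(x\odot y)=x$ for all $x,y\in X$.

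To finish, I would use the fact that involutivity makes $y\mapsto y^*$ a bijection of $X$; since $x\odot y=(x\ra y^*)^*$, quantifying over $y$ the condition $x\Cup(x\odot y)=x$ is the same as $x\Cup(x\ra y)^*=x$, which is precisely $(IOM'')$. Invoking Lemma \ref{roma-10} to pass between $(IOM'')$ and $(IOM)$ then closes the argument in both directions and yields the definitional equivalence of IOM algebras with OM algebras.

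The only real difficulty is bookkeeping: tracking which subexpressions are starred and unstarred while simplifying $a\oplus(a^*\odot x)$. There is no conceptual obstruction, since the equivalence of the underlying languages is already established and every auxiliary move is furnished by Lemma \ref{qbe-10}.
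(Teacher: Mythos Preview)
Your proposal is correct and follows essentially the same route as the paper: translate (Pom) into the $\ra$-language, simplify using the involutive identities of Lemma~\ref{qbe-10}, and use the bijectivity of $^*$ to rewrite the resulting identity as one of the (IOM) forms. The only cosmetic difference is that you abbreviate $a=x\odot y$ and arrive at $x\Cup a=x$, hence $(IOM'')$, whereas the paper writes out the full chain of substitutions and lands on $x^*\Cap(y\ra x^*)=x^*$, hence $(IOM)$; by Lemma~\ref{roma-10} these are equivalent, so the two arguments are effectively the same.
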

\begin{proof} By definitions of $\odot$ and $\oplus$, and applying Lemma \ref{qbe-10} we get: \\
$\hspace*{2cm}$ $(x\odot y)\oplus ((x\odot y)^*\odot x)=x$ iff \\
$\hspace*{2cm}$ $(x\ra y^*)^*\oplus ((x\ra y^*)\ra x^*)^*=x$ iff \\
$\hspace*{2cm}$ $(x\ra y^*)\ra ((x\ra y^*)\ra x^*)^*=x$ iff \\
$\hspace*{2cm}$ $((x\ra y^*)\ra x^*)\ra (x\ra y^*)^*=x$ iff \\
$\hspace*{2cm}$ $(x\ra (x\ra y^*)^*)\ra (x\ra y^*)^*=x$ iff \\
$\hspace*{2cm}$ $x^*\Cap (y\ra x^*)=x^*$, \\ 
for all $x,y\in X$. 
Since $X^*=\{x^*\mid x\in X\}=X$, it follows that axiom $x^*\Cap (y\ra x^*)=x^*$, for all $x,y\in X$ is 
equivalent to axiom $x\Cap (y\ra x)=x$, for all $x,y\in X$. 
Hence axioms (Pom) and (IOM) are equivalent, so that the implicative-orthomodular algebras are definitionally equivalent to orthomodular algebras. 
\end{proof}

\begin{proposition} \label{roma-30} Let $X$ be an IOM algebra. The following hold for all $x,y,z\in X$: \\
$(1)$ $x\Cap (y\Cup x)=x$ and $x\Cup (y\Cap x)=x$. \\
If $x\le_Q y$, then: \\
$(2)$ $y\Cup x=y$ and $y^*\le_Q x^*;$ \\ 
$(3$ $y\ra z\le_Q x\ra z$ and $z\ra x\le_Q z\ra y;$ \\
$(4)$ $x\Cap z\le_Q y\Cap z$ and $x\Cup z\le_Q y\Cup z;$ \\
$(5)$ $(z\ra y)\Cup (z\ra x)=z\ra y$. 
\end{proposition}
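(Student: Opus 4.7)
For part (1), I would unfold $y\Cup x=(y\ra x)\ra x$ so that $x\Cap(y\Cup x)=x$ becomes a direct instance of (IOM) with $y\ra x$ playing the role of the dummy parameter. The companion identity $x\Cup(y\Cap x)=x$ then drops out by De Morgan duality: using Proposition \ref{qbe-20}(3) twice I rewrite it as $(x^*\Cap(y^*\Cup x^*))^*$, which collapses to $x$ on applying the first identity at $x^*,y^*$ and then involutivity.

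For part (2), the crux is to realise $x$ in the syntactic shape $(y\ra b)^*$ so that (IOM$''$) can absorb it into a $\Cup$ with $y$. Proposition \ref{qbe-30}(5) furnishes exactly this bridge: from $x\le_Q y$ it gives $(y\ra x)\odot y=x$, and unwinding $\odot$ together with Lemma \ref{qbe-10}(3) converts this to $x=(y\ra(y\ra x)^*)^*$. Taking $b=(y\ra x)^*$ in (IOM$''$) then produces $y\Cup x=y$. The companion $y^*\le_Q x^*$ is immediate afterwards: dualising via Proposition \ref{qbe-20}(3) turns $y=y\Cup x$ into $y^*=y^*\Cap x^*$.

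For part (3), I would prove $z\ra x\le_Q z\ra y$ directly from (IOM) applied to $z\ra x$: it yields $(z\ra x)\Cap((y\ra x)\ra(z\ra x))=z\ra x$, and BE$_4$ reshapes $(y\ra x)\ra(z\ra x)=z\ra((y\ra x)\ra x)=z\ra(y\Cup x)=z\ra y$ by part (2). The other direction $y\ra z\le_Q x\ra z$ is then obtained by applying the case just established to the dualised hypothesis $y^*\le_Q x^*$ with $z^*$ in place of $z$, and translating back through Lemma \ref{qbe-10}(6). Parts (4) and (5) are routine bootstrapping: for (4) I iterate (3) twice, using a single $*$-flip via (2) to handle the $\Cap$-half; for (5) I combine (3) with the first half of (2) in one step.

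The main obstacle is clearly (2); everything downstream cascades from it. The non-routine move is spotting that Proposition \ref{qbe-30}(5) is precisely the conversion of the abstract relation $x\le_Q y$ into the concrete syntactic form $(y\ra b)^*$ that (IOM$''$) can consume. Once that identification is made, the rest of the proposition unfolds by iterated applications of (IOM), BE$_4$, Lemma \ref{qbe-10}, and the involutive De Morgan identities in Proposition \ref{qbe-20}(3).
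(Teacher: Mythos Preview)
Your proposal is correct throughout, but there are two places where your route diverges from the paper's.

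For part~(2), the paper's argument is more direct than yours. Since $x\le_Q y$ literally means $x=x\Cap y$, one may substitute into the second identity of part~(1) (with the roles of $x$ and $y$ swapped) to obtain $y\Cup x=y\Cup(x\Cap y)=y$ in one line. Your detour through Proposition~\ref{qbe-30}(5) to manufacture the shape $(y\ra b)^*$ and then invoke (IOM$''$) is valid, but it bypasses the absorption identity you just proved in~(1), which was tailor-made for exactly this step.

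For part~(3), by contrast, your approach is arguably cleaner than the paper's. The paper establishes both monotonicity statements by separate direct computations: it expands $(y\ra z)\Cap(x\ra z)$ and $(z\ra x)\Cap(z\ra y)$ explicitly, each time rewriting via $x=x\Cap y$ or $y=y\Cup x$ and then collapsing with (IOM$'$). Your strategy---prove $z\ra x\le_Q z\ra y$ via (IOM) applied to $z\ra x$, using (BE$_4$) to rewrite $(y\ra x)\ra(z\ra x)=z\ra(y\Cup x)=z\ra y$, and then derive $y\ra z\le_Q x\ra z$ from $y^*\le_Q x^*$ by duality---avoids one of the two long expansions and makes the symmetry between the two claims transparent.

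Parts~(1), (4), and~(5) match the paper essentially verbatim.
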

\begin{proof}
$(1)$ By $(IOM)$, we have: \\
$\hspace*{1.50cm}$ $x\Cap (y\Cup x)=x\Cap ((y\ra x)\ra x)=x;$ \\ 
$\hspace*{1.50cm}$ $x\Cup (y\Cap x)=x\Cup((y^*\ra x^*)\ra x^*)^*=(x^*\Cap ((y\ra x^*)\ra x^*))^*=(x^*)^*=x$. \\
$(2)$ Since $x=x\Cap y$, using $(1)$ we get $y\Cup x=y\Cup (x\Cap y)=y$. 
It follows that $y^*=(y\Cup x)^*=y^*\Cap x^*$, hence $y^*\le_Q x^*$. \\
$(3)$ Since by $(IOM^{'})$, $y\ra z\le_Q (y\ra z)^*\ra (y\ra x)^*$,  we have: \\
$\hspace*{2.00cm}$ $(y\ra z)\Cap (x\ra z)=(y\ra z)\Cap ((x\Cap y)\ra z)$ \\
$\hspace*{5.15cm}$ $=(y\ra z)\Cap(((x^*\ra y^*)\ra y^*)^*\ra z)$ \\
$\hspace*{5.15cm}$ $=(y\ra z)\Cap (z^*\ra ((x^*\ra y^*)\ra y^*))$ \\
$\hspace*{5.15cm}$ $=(y\ra z)\Cap ((x^*\ra y^*)\ra (z^*\ra y^*))$ \\
$\hspace*{5.15cm}$ $=(y\ra z)\Cap ((y\ra x)\ra (y\ra z))$ \\
$\hspace*{5.15cm}$ $=(y\ra z)\Cap ((y\ra z)^*\ra (y\ra x)^*)$ \\
$\hspace*{5.15cm}$ $=y\ra z$. \\
It follows that $y\ra z\le_Q x\ra z$. 
By $(2)$, $x\le_Q y$ implies $y=y\Cup x$, and by $(IOM^{'})$ 
$z\ra x\le_Q (z\ra x)^*\ra (y\ra x)^*$, hence: \\
$\hspace*{2.00cm}$ $(z\ra x)\Cap (z\ra y)=(z\ra x)\Cap (z\ra (y\Cup x))$ \\
$\hspace*{5.15cm}$ $=(z\ra x)\Cap (z\ra ((y\ra x)\ra x))$ \\
$\hspace*{5.15cm}$ $=(z\ra x)\Cap ((y\ra x)\ra (z\ra x))$ \\
$\hspace*{5.15cm}$ $=(z\ra x)\Cap ((z\ra x)^*\ra (y\ra x)^*)$ \\
$\hspace*{5.15cm}$ $=z\ra x$. \\
Thus $z\ra x\le_Q z\ra y$. \\
$(4)$ Since $x\le_Q y$ implies $y^*\le_Q x^*$, using $(3)$ we get $x^*\ra z^*\le_Q y^*\ra z^*$ and 
$(y^*\ra z^*)\ra z^*\le_Q (x^*\ra z^*)\ra z^*$. 
It follows that $((x^*\ra z^*)\ra z^*)^*\le_Q ((y^*\ra z^*)\ra z^*)^*$, 
that is $x\Cap z\le_Q y\Cap z$. 
Applying again $(3)$, we also have $y\ra z\le_Q x\ra z$ and $(x\ra z)\ra z\le_Q (y\ra z)\ra z$. 
Hence $x\Cup z\le_Q y\Cup z$. \\
$(5)$ Since by $(3)$, $x\le_Q y$ implies $z\ra x\le_Q z\ra y$, using $(2)$ we get $(z\ra y)\Cup (z\ra x)=z\ra y$. 
\end{proof}

\begin{proposition} \label{roma-40} Let $X$ be an IOM algebra. The following hold for all $x,y,z\in X$: \\
$(1)$ $(x\ra y)\Cup y=x\ra y;$ \\
$(2)$ $(x\ra y)\ra (y\Cap x)=x;$ \\
$(3)$ $x\ra (y\Cap x)=x\ra y;$ \\
$(4)$ $(x\Cup y)\ra (x\ra y)^*=y^*;$ \\ 
$(5)$ $x\Cap ((y\ra x)\Cap (z\ra x))=x;$ \\
$(6)$ $x\le y$ iff $y\Cap x=x;$ \\
$(7)$ $x\le_Q y$ and $y\le x$ imply $x=y;$ \\
$(8)$ $x\Cap y\le_Q y\le_Q x\Cup y;$ \\
$(9)$ $(x\Cup y)\ra y=x\ra y;$ \\
$(10)$ $x\Cap y, y\Cap x\le_Q x\ra y$.      
\end{proposition}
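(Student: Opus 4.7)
The approach is to unpack Lemma \ref{roma-10} into three convenient $\le_Q$-inequalities: $(IOM)$ reads $x\le_Q y\ra x$, $(IOM^{'})$ reads $x\le_Q x^*\ra y$, and $(IOM^{''})$ reads $x^*\le_Q x\ra y$. From these, almost every item reduces to a short chain built from Lemma \ref{qbe-10}, Propositions \ref{qbe-20}, \ref{qbe-30}, and the monotonicity results of Proposition \ref{roma-30}. Items $(6)$ and $(7)$ are pure involutive BE computations, both pivoting on Lemma \ref{qbe-10}(6), which identifies $y^*\ra x^*$ with $x\ra y$: this collapses $y\Cap x$ to $x$ when $x\le y$, and forces $y=x$ when both $x=x\Cap y$ and $y\ra x=1$ hold.

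The key observation for $(1)$, $(3)$, $(4)$, and $(9)$ is that $(IOM^{'})$ with $(x,y)$ replaced by $(y,x^*)$, together with Lemma \ref{qbe-10}(6), yields $y\le_Q y^*\ra x^*=x\ra y$; Proposition \ref{roma-30}(2) then dualizes to $(x\ra y)^*\le_Q y^*$ and produces $(x\ra y)\Cup y=x\ra y$, which is $(1)$. Item $(9)$ drops out at once, since $(x\Cup y)\ra y=((x\ra y)\ra y)\ra y=(x\ra y)\Cup y$. For $(4)$, Proposition \ref{roma-30}(2) also gives $y^*\Cup(x\ra y)^*=y^*$, whose left-hand side unfolds via Lemma \ref{qbe-10}(6) as $((x\ra y)\ra y)\ra(x\ra y)^*=(x\Cup y)\ra(x\ra y)^*$. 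For $(3)$, Lemma \ref{qbe-10}(3) and (6) transform $x\ra(y\Cap x)$ into $(x\ra y)\Cup x^*$, and $(IOM^{''})$ combined with Proposition \ref{roma-30}(2) collapses this to $x\ra y$. Item $(2)$ is the same manipulation: two applications of Lemma \ref{qbe-10}(3) turn $(x\ra y)\ra(y\Cap x)$ into $x\Cup(x\ra y)^*$, which is $x$ by $(IOM^{''})$. Items $(5)$ and $(8)$ are pure monotonicity arguments: for $(5)$, $(IOM)$ gives both $x\le_Q y\ra x$ and $x\le_Q z\ra x$, so Proposition \ref{roma-30}(4) yields $x=x\Cap(z\ra x)\le_Q(y\ra x)\Cap(z\ra x)$; for $(8)$, $x\Cap y\le_Q y$ reads off Proposition \ref{qbe-20}(10) after swapping $x$ and $y$, and $y\le_Q x\Cup y$ off Proposition \ref{roma-30}(1) after the same swap.

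The subtlest item is $(10)$. The inequality $y\Cap x\le_Q x\ra y$ is immediate: $(IOM)$ applied to $y\Cap x$ with free parameter $x$ gives $y\Cap x\le_Q x\ra(y\Cap x)$, and $(3)$ rewrites the right-hand side as $x\ra y$. The companion $x\Cap y\le_Q x\ra y$ is more delicate, since $(3)$ does not directly yield $x\ra(x\Cap y)=x\ra y$, and $\Cap$ need not be commutative in a general IOM algebra. My plan is to apply Proposition \ref{roma-30}(4) to $y\le_Q x\ra y$ with $z:=x\Cap y$, simplifying $y\Cap(x\Cap y)$ to $x\Cap y$ via Proposition \ref{qbe-20}(10), to produce $x\Cap y\le_Q(x\ra y)\Cap(x\Cap y)$; combining this with the reverse $(x\ra y)\Cap(x\Cap y)\le_Q x\Cap y$ from Proposition \ref{qbe-20}(10) and the antisymmetry of $\le_Q$ from Proposition \ref{qbe-20}(2) forces $(x\ra y)\Cap(x\Cap y)=x\Cap y$. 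The main technical obstacle, and the step where I expect to need an additional manipulation via Lemma \ref{qbe-10} or a further use of $(IOM^{''})$, is converting this identity into the desired $(x\Cap y)\Cap(x\ra y)=x\Cap y$, namely establishing the missing local commutativity of $\Cap$ on the pair in question.
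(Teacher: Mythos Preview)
Your arguments for $(1)$--$(5)$ and $(9)$ are correct and essentially parallel the paper's. The real trouble is a recurring misreading of Proposition~\ref{qbe-20}(10), which undermines both $(8)$ and $(10)$. That proposition gives identities of the shape $a\Cap(b\Cap a)=b\Cap a$ and $a\Cap(a\Cap b)=a\Cap b$; neither yields $(a\Cap b)\Cap b=a\Cap b$, and since $\Cap$ is \emph{not} commutative in an involutive BE algebra, you cannot read off $x\Cap y\le_Q y$ from it (recall $c\le_Q d$ means $c\Cap d=c$, not $d\Cap c=c$). The paper instead observes $y^*\Cap((x^*\ra y^*)\ra y^*)=y^*$ directly from $(IOM)$, i.e.\ $y^*\le_Q x^*\Cup y^*$, and then dualizes via Proposition~\ref{roma-30}(2) to get $x\Cap y\le_Q y$. (A smaller slip: the converse of $(6)$, and hence $(7)$, is not ``pure involutive BE''; from $y\Cap x=x$ one only has $(x\ra y)\ra x^*=x^*$, and the paper needs $(3)$ to conclude $x\ra y=x\ra(y\Cap x)=x\ra x=1$.)

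The same misreading breaks your plan for $x\Cap y\le_Q x\ra y$ in $(10)$: the step ``$(x\ra y)\Cap(x\Cap y)\le_Q x\Cap y$ from Proposition~\ref{qbe-20}(10)'' is not available, so the antisymmetry argument collapses before you even reach the acknowledged ``local commutativity'' obstacle. The paper bypasses all of this: from $y\le_Q x\ra y$ it gets $(x\ra y)^*\le_Q y^*$ by Proposition~\ref{roma-30}(2), and from $(IOM)$ it has $y^*\le_Q (x^*\ra y^*)\ra y^*=(x\Cap y)^*$; chaining gives $(x\ra y)^*\le_Q(x\Cap y)^*$, hence $x\Cap y\le_Q x\ra y$. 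The chaining uses transitivity of $\le_Q$, established independently of $(10)$ in Proposition~\ref{roma-80}(7), so there is no circularity. Your clean one-line argument for $y\Cap x\le_Q x\ra y$ via $(IOM)$ and $(3)$ is exactly the paper's.
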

\begin{proof}
$(1)$ Applying (IOM) and Proposition \ref{roma-30}$(1)$, we get \\
$\hspace*{2cm}$ $(x\ra y)\Cup y=(x\ra y)\Cup (y\Cap (x\ra y))=x\ra y$. \\
$(2)$ Using (IOM), we have: \\
$\hspace*{2.00cm}$ $x^*=(x\Cap (y\ra x))^*=(x^*\ra (y\ra x)^*)\ra (y\ra x)^*$ \\
$\hspace*{2.50cm}$ $=((y\ra x)\ra x)\ra (y\ra x)^*=(y\Cup x)\ra (y\ra x)^*$ \\
$\hspace*{2.50cm}$ $=(y\ra x)\ra (y\Cup x)^*=(x^*\ra y^*)\ra (y^*\Cap x^*)$. \\
Replacing $x$ by $x^*$ and $y$ by $y^*$, we get $(x\ra y)\ra (y\Cap x)=x$. \\
$(3)$ Since $x^*\le_Q y^*\ra x^*$, applying \ref{roma-30}$(2)$, we get: \\
$\hspace*{2.00cm}$ $x\ra (y\Cap x)=x\ra ((y^*\ra x^*)\ra x^*)^*=((y^*\ra x^*)\ra x^*)\ra x^*$ \\
$\hspace*{4.00cm}$ $=(y^*\ra x^*)\Cup x^*=y^*\ra x^*=x\ra y$. \\
$(4)$ Applying (IOM), we have $y=y\Cap (x\ra y)=((y^*\ra (x\ra y)^*)\ra (x\ra y)^*)^*$. 
It follows that $y^*=(y^*\ra (x\ra y)^*)\ra (x\ra y)^*=((x\ra y)\ra y)\ra (x\ra y)^*=(x\Cup y)\ra (x\ra y)^*$. \\
$(5)$ Since $x\le_Q y\ra x, z\ra x$, applying Proposition \ref{roma-30}$(4)$, we have: 
$x=x\Cap (z\ra x)\le_Q (y\ra x)\Cap (z\ra x)$, hence $x\Cap ((y\ra x)\Cap (z\ra x))=x$. \\
$(6)$ If $x\le y$, then $x\ra y=1$ and we have 
$y\Cap x=((y^*\ra x^*)\ra x^*)^*=((x\ra y)\ra x^*)^*=(1\ra x^*)^*=(x^*)^*=x$. 
Conversely, if $y\Cap x=x$, using $(3)$ we get $x\ra y=x\ra (y\Cap x)=x\ra x=1$, hence $x\le y$. \\
$(7)$ By $(6)$, $y\le x$ implies $x\Cap y=y$. Since $x\le_Q y$, we have $x\Cap y=x$, hence $x=y$. \\
$(8)$ It follows by $(IOM)$ and Proposition \ref{roma-30}$(2)$. \\
$(9)$ Using $(3)$, we get: 
$(x\Cup y)\ra y=y^*\ra (x\Cup y)^*=y^*\ra (x^*\Cap y^*)=y^*\ra x^*=x\ra y$. \\
$(10)$ Using $(IOM)$, we have $y\le_Q x\ra y$, so that 
$(x\ra y)^*\le_Q y^*\le_Q (x^*\ra y^*)\ra y^*=(x\Cap y)^*$. Hence $x\Cap y\le_Q x\ra y$. 
Similarly $(x\ra y)^*\le (x\ra y)\ra x^*=(y^*\ra x^*)\ra x^*=(y\Cap x)^*$. 
Thus $y\Cap x\le_Q x\ra y$. 
\end{proof}

\begin{theorem} \label{roma-50} An involutive BE algebra $X$ is an IOM algebra if and only if it satisfies  
axiom $(QW_2)$. 
\end{theorem}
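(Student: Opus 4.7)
We prove both implications. The forward direction is a one-line substitution, while the converse reduces $(QW_2)$ to a single $\le_Q$-inequality.

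For $(QW_2)\Rightarrow\mathrm{(IOM)}$, I would substitute $y=0$ into $(QW_2)$. Using $0\Cap(z\Cap x)=0$ and $y\Cap 0=0$ from Proposition~\ref{qbe-20}(6), together with $x\ra 0=x^*$, the axiom collapses to
\[
x^*=x^*\Cap(x\ra z)\quad\text{for all }x,z\in X,
\]
i.e., $x^*\le_Q x\ra z$. Substituting $x\mapsto x^*$ and $z\mapsto y$ yields $x=x\Cap(x^*\ra y)$, which is condition $(IOM^{'})$ of Lemma~\ref{roma-10}; hence (IOM) holds.

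For $\mathrm{(IOM)}\Rightarrow(QW_2)$ the strategy is first to reduce $(QW_2)$ to a single identity involving an auxiliary element, and then to close the proof via a short $\le_Q$-chain. Using the definition $a\Cap b=((a^*\ra b^*)\ra b^*)^*$ together with Lemma~\ref{qbe-10}(3),(6) and $(BE_4)$, one computes---setting
\[
V:=((x\ra z)\ra(x\ra y))\ra(x\ra z)^*
\]
---that
\[
(x\ra y)\Cap(x\ra z)=V^*\qquad\text{and}\qquad x\ra\bigl(y\Cap(z\Cap x)\bigr)=(x\ra V)\ra x^*.
\]
Thus $(QW_2)$ becomes the single identity $(x\ra V)\ra x^*=V^*$.

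To finish, I would show $V\le_Q x$. By Lemma~\ref{roma-10}, (IOM) gives $x^*\Cap(x\ra y)=x^*$, i.e., $x^*\le_Q x\ra y$, for all $x,y\in X$. Proposition~\ref{roma-30}(4) (monotonicity of $\Cap$ in its first argument under $\le_Q$), applied with $c=x\ra z$, yields
\[
x^*=x^*\Cap(x\ra z)\le_Q(x\ra y)\Cap(x\ra z)=V^*.
\]
Proposition~\ref{roma-30}(2) (the dualization $u\le_Q v\Rightarrow v^*\le_Q u^*$) then produces $V\le_Q x$. Finally, Proposition~\ref{qbe-30}(5)---valid in any involutive BE algebra: $V\le_Q x\Rightarrow(x\ra V)\odot x=V$---unpacks via the definition of $\odot$ to $(x\ra V)\ra x^*=V^*$, as required.

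The main obstacle is the preliminary bookkeeping that isolates $V$: each side of $(QW_2)$ must be massaged by several applications of $(BE_4)$ together with Lemma~\ref{qbe-10}(3),(6), and it is not immediately obvious to introduce exactly this $V$. Once both sides are brought to the common form above, however, the order-theoretic finish is only three short steps.
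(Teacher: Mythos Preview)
Your proof is correct. The forward direction $(QW_2)\Rightarrow\mathrm{(IOM)}$ is identical to the paper's.

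For the converse, the paper takes a different route: it first establishes $x\Cup((y^*\ra x^*)^*\Cup(z^*\ra x^*)^*)=x$ from Proposition~\ref{roma-40}(5) by dualization and substitution, and then transforms $x\ra(y\Cap(z\Cap x))$ step by step into $(x\ra y)\Cap(x\ra z)$ via Proposition~\ref{qbe-30}(3) and Proposition~\ref{roma-40}(4). Your approach instead isolates the auxiliary $V=((x\ra y)\Cap(x\ra z))^*$, shows by pure BE-manipulation that the two sides of $(QW_2)$ become $(x\ra V)\ra x^*$ and $V^*$, and then finishes with the single order-theoretic step $V\le_Q x$ plus the cancellation in Proposition~\ref{qbe-30}(5). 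The underlying $\le_Q$-fact is the same in both arguments---your $x^*\le_Q(x\ra y)\Cap(x\ra z)$ is exactly the dual of Proposition~\ref{roma-40}(5)---so the difference is organizational rather than conceptual. Your version separates the involutive-BE bookkeeping from the IOM content more cleanly and closes with one cancellation; the paper's version avoids introducing the auxiliary $V$ but threads the order facts through a longer chain of identities.
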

\begin{proof}
Taking $y:=0$ in $(QW_2)$, we have $x^*=x^*\Cap (x\ra z)$, and replacing $x$ with $x^*$ and $z$ with $y$,  
we get $x=x\Cap (x^*\ra y)$. It follows that $X$ satisfies condition $(IOM^{'}$), hence it is 
an IOM algebra. 
Conversely, suppose that $X$ is an IOM algebra. 
From Proposition \ref{roma-40}$(5)$, we have $x^*\Cup ((y\ra x)^*\Cup (z\ra x)^*))=x^*$. 
Replacing $x, y, z$ with $x^*, y^*, z^*$, respectively, we get $x\Cup ((y^*\ra x^*)^*\Cup (z^*\ra x^*))=x$. 
It follows that: \\
$\hspace*{1.00cm}$ $x\ra (y\Cap (z\Cap x))=x\ra (y\Cap (z^*\Cup x^*)^*)$ \\
$\hspace*{4.00cm}$ $=x\ra (y^*\Cup ((z^*\Cup x^*))^*$ \\ 
$\hspace*{4.00cm}$ $=x\ra (y^*\Cup ((z^*\ra x^*)\ra x^*))^*$ \\
$\hspace*{4.00cm}$ $=x\ra (y^*\Cup (x\ra (z^*\ra x^*)^*)^*$ \\
$\hspace*{4.00cm}$ $=x\ra (x\ra ((y^*\ra x^*)^*\Cup (z^*\ra x^*)^*))^*$ (Prop. \ref{qbe-30}$(3)$) \\
$\hspace*{4.00cm}$ $=(x\Cup ((y^*\ra x^*)^*\Cup (z^*\ra x^*)^*))\ra (x\ra ((y^*\ra x^*)^*\Cup (z^*\ra x^*)^*))^*$ \
$\hspace*{4.00cm}$ $=((y^*\ra x^*)^*\Cup (z^*\ra x^*)^*)^*$ (Prop. \ref{roma-40}$(4)$) \\
$\hspace*{4.00cm}$ $=(y^*\ra x^*)\Cap (z^*\ra x^*)=(x\ra y)\Cap (x\ra z)$. \\
Hence axiom $(QW_2)$ is satisfied. 
\end{proof}



\begin{proposition} \label{roma-80} Let $X$ be an IOM algebra. The following hold for all $x,y,z\in X$: \\
$(1)$ $(x\Cap y)\Cap y=x\Cap y;$ \\
$(2)$ $x\Cup (y\Cap x)=x;$ \\ 
$(3)$ $x\Cap (y\Cup x)=x;$ \\
$(4)$ $x\Cap y\le_Q y\le_Q x\Cup y;$ \\
$(5)$ $(x\Cap y)\Cap (y\Cap z)=(x\Cap y)\Cap z;$ \\
$(6)$ $(x\Cup y)\Cup (y\Cup z)=(x\Cup y)\Cup z;$ \\
$(7)$ $\le_Q$ is transitive; \\
$(8)$ $(x\ra y)\Cup (x\ra (z\Cap y))=x\ra y;$ \\ 
$(9)$ $(x\ra y)\Cup ((z\ra x)\ra y)=x\ra y$.   
\end{proposition}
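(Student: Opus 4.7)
The plan is to dispose of the routine items and focus on (5)--(7) as the substantive content. Items (2) and (3) merely restate Proposition~\ref{roma-30}(1), item (4) is Proposition~\ref{roma-40}(8), and (1) is immediate from (4) via the definition of $\le_Q$: since $x\Cap y\le_Q y$, we have $(x\Cap y)\Cap y=x\Cap y$ by the very definition $a\le_Q b \Leftrightarrow a=a\Cap b$. For (8) and (9), I would use Proposition~\ref{roma-30}(2), which says that $a\le_Q b$ implies $b\Cup a=b$; it only remains to verify the appropriate $\le_Q$ inequality. For (8), observe that $z\Cap y\le_Q y$ by (4), so Proposition~\ref{roma-30}(3) gives $x\ra(z\Cap y)\le_Q x\ra y$. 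For (9), the axiom (IOM) gives $x\le_Q z\ra x$, so Proposition~\ref{roma-30}(3) gives $(z\ra x)\ra y\le_Q x\ra y$. In each case Proposition~\ref{roma-30}(2) now concludes.

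The heart of the proposition is (5) and (6), which are De Morgan duals of each other: using $(a\Cap b)^{*}=a^{*}\Cup b^{*}$ from Proposition~\ref{qbe-20}(3), (5) rewrites as $(x^{*}\Cup y^{*})\Cup(y^{*}\Cup z^{*})=(x^{*}\Cup y^{*})\Cup z^{*}$, which is (6) applied to $x^{*},y^{*},z^{*}$. So it suffices to prove (6), and I would reduce this to the auxiliary claim: if $B\le_Q A$, then $A\Cup(B\Cup C)=A\Cup C$. Applied to $A=x\Cup y$ and $B=y$ (where $y\le_Q x\Cup y$ by (4)), this yields (6). To prove the claim, expand $A\Cup(B\Cup C)=(A\ra(B\Cup C))\ra(B\Cup C)$, rewrite $A\ra(B\Cup C)=(B\ra C)\ra(A\ra C)$ via Proposition~\ref{qbe-20}(8), and apply $(BE_{4})$ to put the expression in the form $u\ra((u\ra v)\ra C)$, where $u=B\ra C$ and $v=A\ra C$. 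By Proposition~\ref{roma-30}(3), the hypothesis $B\le_Q A$ forces $v\le_Q u$. A short chain of identities from Lemma~\ref{qbe-10}(8),(3),(6) together with Proposition~\ref{qbe-20}(3) rewrites the inner factor $(u\ra(u\ra v)^{*})^{*}$ as $v\Cap u$, which equals $v$ since $v\le_Q u$. The whole expression then collapses to $v\ra C=(A\ra C)\ra C=A\Cup C$, as required.

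Finally, (7) falls out of (5) in one line: if $x\le_Q y$ and $y\le_Q z$, then $x\Cap y=x$ and $y\Cap z=y$, so (5) yields $x=(x\Cap y)\Cap(y\Cap z)=(x\Cap y)\Cap z=x\Cap z$, i.e., $x\le_Q z$. The main obstacle is the identity computation behind the auxiliary claim: several applications of Lemma~\ref{qbe-10} must be chained correctly, and the crucial moment is recognizing where the hypothesis $v\le_Q u$ is used to collapse $v\Cap u$ down to $v$. Once that step is in view, everything else is routine algebra.
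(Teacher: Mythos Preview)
Your proof is correct. The approach is essentially the mirror image of the paper's, with one economy the paper misses.

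For (2) and (3) you correctly observe that these are literally the two halves of Proposition~\ref{roma-30}(1); the paper, curiously, re-derives (2) from scratch via a nine-line computation through Proposition~\ref{roma-40}(3) and then dualizes to get (3). Your citation is the right move. Items (1), (4), (8), (9) are handled identically in both arguments, as is the derivation of (7) from (5).

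The one substantive difference is the order in which (5) and (6) are attacked. The paper proves (5) directly: it unfolds $(x\Cap y)\Cap(y\Cap z)$, applies Proposition~\ref{qbe-20}(7) to rewrite $(b\ra a)\ra(b\ra c)$ as $(a\Cap b)\ra c$, and then uses $x\Cap y\le_Q y$ (hence $(x\Cap y)^*\ra z^*\le_Q y^*\ra z^*$) to collapse the intermediate $\Cap$; (6) then follows by duality. You instead prove (6) first via the auxiliary claim ``$B\le_Q A$ implies $A\Cup(B\Cup C)=A\Cup C$'', using Proposition~\ref{qbe-20}(8) and Lemma~\ref{qbe-10}(8),(3),(6) to expose $v\Cap u$ and then collapsing it via $v\le_Q u$; (5) follows by duality. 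The two computations are formally dual---your $u,v$ are the $b,a$ of the paper with stars in the right places---so neither is shorter or more elementary, but your abstraction into a standalone claim is a cleaner packaging and makes the role of the $\le_Q$ hypothesis more visible.
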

\begin{proof}
$(1)$ Since $(y\ra (x^*\ra y^*)^*)^*\le_Q y$, we have: \\
$\hspace*{2.00cm}$ $(x\Cap y)\Cap y=((x^*\ra y^*)\ra y^*)^*\Cap y=(y\ra (x^*\ra y^*)^*)^*\Cap y$ \\
$\hspace*{3.85cm}$ $=(y\ra (x^*\ra y^*)^*)^*=((x^*\ra y^*)\ra y^*)^*=x\Cap y$. \\
$(2)$ By Proposition \ref{roma-40}$(3)$, we have: \\
$\hspace*{2.00cm}$ $x=x^{**}=(x^*)^*=(x^*\Cap (x\ra y))^*=x\Cup (x\ra y)^*$ \\
$\hspace*{2.30cm}$ $=(x\ra (x\ra y)^*)\ra (x\ra y)^*$ \\
$\hspace*{2.30cm}$ $=(x\ra y)\ra (x\ra (x\ra y)^*)^*$ \\
$\hspace*{2.30cm}$ $=(x\ra y)\ra ((x\ra y)\ra x^*)^*$ \\
$\hspace*{2.30cm}$ $=(x\ra y)\ra ((y^*\ra x^*)\ra x^*)^*$ \\
$\hspace*{2.30cm}$ $=(x\ra y)\ra (y\Cap x)$ \\
$\hspace*{2.30cm}$ $=(x\ra (y\Cap x))\ra (y\Cap x)$ \\
$\hspace*{2.30cm}$ $=x\Cup (y\Cap x)$. \\
$(3)$ Applying $(2)$, we have: \\
$\hspace*{2.15cm}$ $x\Cap (y\Cup x)=(x^*\Cup (y\Cup x)^*)^*=(x^*\Cup (y^*\Cap x^*))^*=(x^*)^*=x$. \\
$(4)$ By $(1)$, $x\Cap y\le_Q y$. Moreover $y\le_Q (x\ra y)\ra y=x\Cup y$. \\
$(5)$ Using Proposition \ref{qbe-20}$(7)$, since $x\Cap y\le_Q y$, we get: \\
$\hspace*{2.20cm}$ $(x\Cap y)\Cap (y\Cap z)=(((x\Cap y)^*\ra (y\Cap z)^*)\ra (y\Cap z)^*)^*$ \\
$\hspace*{5.00cm}$ $=(((x\Cap y)^*\ra (y^*\Cup z^*))\ra (y^*\Cup z^*))^*$ \\
$\hspace*{5.00cm}$ $=(((x\Cap y)^*\ra ((y^*\ra z^*)\ra z^*))\ra ((y^*\ra z^*)\ra z^*))^*$ \\
$\hspace*{5.00cm}$ $=(((y^*\ra z^*)\ra ((x\Cap y)^*\ra z^*))\ra ((y^*\ra z^*)\ra z^*))^*$ \\
$\hspace*{5.00cm}$ $=((((x\Cap y)^*\ra z^*)\Cap (y^*\ra z^*))\ra z^*)^*$ \\
$\hspace*{5.00cm}$ $=(((x\Cap y)^*\ra z^*)\ra z^*)^*$ \\
$\hspace*{5.00cm}$ $=(x\Cap y)\Cap z$. \\
(Denoting by $a=(x\Cap y)\ra z^*$, $b=y^*\ra z^*$, $c=z^*$, by Proposition \ref{qbe-20}$(7)$ we have 
$(b\ra a)\ra (b\ra c)=(a\Cap b)\ra c$. It follows that: \\
$((y^*\ra z^*)\ra ((x\Cap y)^*\ra z^*))\ra ((y^*\ra z^*)\ra z^*)=(((x\Cap y)^*\ra z^*)\Cap (y^*\ra z^*))\ra z^*$. \\
Since $x\Cap y\le_Q y$, we have $y^*\le_Q (x\Cap y)^*$. 
Hence $(x\Cap y)^*\ra z^*\le_Q y^*\ra z^*$, that is $((x\Cap y)^*\ra z^*)\Cap (y^*\ra z^*)=(x\Cap y)^*\ra z^*$). \\
$(6)$ Applying $(5)$, we have: \\
$\hspace*{2.20cm}$ $(x\Cup y)\Cup (y\Cup z)=((x^*\Cap y^*)\Cap (y^*\Cap z^*)^*)^*=((x^*\Cap y^*)\Cap z^*)^*$ \\
$\hspace*{5.00cm}$ $=(x^*\Cap y^*)^*\Cup z=(x\Cup y)\Cup z$. \\
$(7)$ If $x\le_Q y$ and $y\le_Q z$, then $x=x\Cap y$ and $y=y\Cap z$. 
Applying $(5)$, we get $x=(x\Cap y)\Cap (y\Cap z)=(x\Cap y)\Cap z=x\Cap z$, that is $x\le_Q z$. 
We conclude that $\le_Q$ is transitive. \\
$(8)$ and $(9)$ follow by Proposition \ref{roma-30}$(2)$, since $x\ra (z\Cap y)\le_Q x\ra y$ and 
$(z\ra x)\ra y\le_Q x\ra y$, respectively. 
\end{proof}

\begin{corollary} \label{roma-90} If $X$ is a quantum-Wajsberg algebra, then $\le_Q$ is a partial order on $X$. 
\end{corollary}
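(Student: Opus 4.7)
The plan is to verify the three defining properties of a partial order in turn, pulling each from results already in hand. I would begin by noting that reflexivity and antisymmetry of $\le_Q$ hold in every involutive BE algebra by Proposition \ref{qbe-20}$(2)$, so in particular they hold in any quantum-Wajsberg algebra, which is by definition an involutive BE algebra.

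The only remaining ingredient is transitivity, and here I would invoke Remark \ref{roma-20-10}, which tells us that every quantum-Wajsberg algebra is an IOM algebra. Having placed $X$ inside the class of IOM algebras, transitivity of $\le_Q$ is immediate from Proposition \ref{roma-80}$(7)$. Combining these three properties yields the conclusion.

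There is essentially no obstacle here, since the corollary is a repackaging of already-established facts: the substantive work was done in Proposition \ref{roma-80}$(7)$ (transitivity in IOM algebras) and in the earlier observation that QW algebras are IOM algebras. The only thing to be careful about is citing the correct propositions in the correct order, and making clear that the appeal to Proposition \ref{roma-80}$(7)$ is legitimate because of the inclusion of QW algebras among IOM algebras.
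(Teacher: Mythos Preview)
Your proposal is correct and follows essentially the same approach as the paper, which simply cites Proposition \ref{qbe-20}$(2)$ for reflexivity and antisymmetry and Proposition \ref{roma-80}$(7)$ for transitivity. Your only addition is the explicit appeal to Remark \ref{roma-20-10} to justify applying Proposition \ref{roma-80}$(7)$ to a QW algebra, a step the paper leaves implicit.
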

\begin{proof}
It follows by Propositions \ref{qbe-20}$(2)$ and \ref{roma-80}$(7)$. 
\end{proof}


\begin{proposition} \label{roma-100} Let $X$ be an IOM algebra. The following hold for all $x,y,z\in X$: \\
$(1)$ $(z\Cap x)\ra (y\Cap x)=(z\Cap x)\ra y;$ \\
$(2)$ $(x\ra y)^*\Cap x=(x\ra y)^*;$ \\ 
$(3)$ $(x\Cap y)\Cap y=x\Cap y;$ \\
$(4)$ $x\ra (y\ra z)=(x\odot y)\ra z;$ \\
$(5)$ $x\le_Q y\ra z$ implies $x\odot y\le z;$ \\
$(6)$ $x\odot y\le z$ implies $x\le y\ra z;$ \\
$(7)$ $(x\ra y)\odot x\le y;$ \\
$(8)$ $x\le_Q y$ implies $x\odot z\le_Q y\odot z$. 
\end{proposition}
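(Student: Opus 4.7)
The strategy is to push each of the eight items through a short computation grounded in the earlier results — Lemma \ref{qbe-10}, Propositions \ref{qbe-20}, \ref{roma-30}, \ref{roma-40}, and \ref{roma-80} — without having to revisit the axiom $(IOM)$ directly, except for the argument powering item $(2)$.

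For $(1)$, I plan to apply Proposition \ref{qbe-20}$(7)$ in the form $(z\Cap x)\ra w=(x\ra z)\ra (x\ra w)$ to both sides. With $w=y\Cap x$ the inner term $x\ra (y\Cap x)$ collapses to $x\ra y$ by Proposition \ref{roma-40}$(3)$, and with $w=y$ the same expression $(x\ra z)\ra (x\ra y)$ appears directly, so the two sides coincide. Item $(2)$ asserts $(x\ra y)^*\le_Q x$; I would first derive $x^*\le_Q x\ra y$ by replacing $x$ with $x^*$ in $(IOM^{'})$, then invoke Proposition \ref{roma-30}$(2)$ (antitonicity of $\ ^*$ with respect to $\le_Q$) to conclude $(x\ra y)^*\le_Q x^{**}=x$. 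Item $(3)$ is simply a restatement of Proposition \ref{roma-80}$(1)$.

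Item $(4)$ reduces to the identity $(x\ra y^*)^*\ra z=x\ra (y^{**}\ra z)$ from Lemma \ref{qbe-10}$(7)$ after unfolding $x\odot y=(x\ra y^*)^*$. Items $(5)$ and $(6)$ are then immediate consequences of $(4)$ together with the fact that $x\le_Q y\ra z$ implies $x\le y\ra z$ (Proposition \ref{qbe-20}$(4)$) and that $a\le b$ is encoded as $a\ra b=1$. For $(7)$, I would instantiate $(5)$ with the substitution $x:=x\ra y$, $y:=x$, $z:=y$, using the reflexivity of $\le_Q$ from Proposition \ref{qbe-20}$(2)$. Finally, for $(8)$, I would combine Proposition \ref{roma-30}$(3)$ (antitonicity of $\ra$ in the first argument with respect to $\le_Q$) with Proposition \ref{roma-30}$(2)$: from $x\le_Q y$ we get $y\ra z^*\le_Q x\ra z^*$, whence $(x\ra z^*)^*\le_Q (y\ra z^*)^*$, which unfolds to $x\odot z\le_Q y\odot z$.

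The hard part will be item $(1)$, where the non-obvious step is recognizing that Proposition \ref{qbe-20}$(7)$ applies cleanly to both sides only after Proposition \ref{roma-40}$(3)$ has been used to absorb the inner $\Cap x$. Once that pattern is spotted, the remaining items are each a one- or two-line manipulation of previously established identities, and the only care needed is in keeping straight the distinction between $\le$ and $\le_Q$ whenever the two order relations are compared.
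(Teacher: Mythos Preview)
Your proposal is correct. Items $(4)$--$(8)$ match the paper's arguments essentially line for line. The differences are in $(1)$, $(2)$, and $(3)$.

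For $(1)$, the paper unfolds $(z\Cap x)\ra (y\Cap x)$ via the definition of $\Cap$ and repeated use of Lemma \ref{qbe-10}$(5),(6)$ together with Proposition \ref{roma-40}$(3)$, reaching $(z\Cap x)\ra y$ after several rewritings; your route through Proposition \ref{qbe-20}$(7)$ is shorter and makes the role of Proposition \ref{roma-40}$(3)$ transparent, since both sides reduce immediately to $(x\ra z)\ra (x\ra y)$. For $(2)$, the paper does a direct computation expanding $(x\ra y)^*\Cap x$ and invoking Proposition \ref{roma-40}$(3)$, whereas you give an order-theoretic argument: $(IOM^{'})$ with $x\mapsto x^*$ yields $x^*\le_Q x\ra y$, and Proposition \ref{roma-30}$(2)$ then gives $(x\ra y)^*\le_Q x$. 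Both are valid; yours is more conceptual, the paper's more self-contained. For $(3)$, the paper re-derives the identity from the freshly proved $(2)$, while you simply cite Proposition \ref{roma-80}$(1)$, which already established it --- a legitimate shortcut.
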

\begin{proof}
$(1)$ Applying Proposition \ref{roma-40}$(3)$, we have: \\
$\hspace*{1.00cm}$ $(z\Cap x)\ra (y\Cap x)=((z^*\ra x^*)\ra x^*)^*\ra (y\Cap x)=(y\Cap x)^*\ra ((z^*\ra x^*)\ra x^*)$ \\
$\hspace*{4.00cm}$ $=(z^*\ra x^*)\ra ((y\Cap x)^*\ra x^*)$ \\
$\hspace*{4.00cm}$ $=(z^*\ra x^*)\ra (x\ra (y\Cap x))=(z^*\ra x^*)\ra (x\ra y)$ \\
$\hspace*{4.00cm}$ $=(z^*\ra x^*)\ra (y^*\ra x^*)=y^*\ra ((z^*\ra x^*)\ra x^*)$ \\
$\hspace*{4.00cm}$ $=((z^*\ra x^*)\ra x^*)^*\ra y=(z\Cap x)\ra y$. \\
$(2)$ Using Proposition \ref{roma-40}$(3)$, we get: \\
$\hspace*{2.00cm}$ $(x\ra y)^*\Cap x=(((x\ra y)\ra x^*)\ra x^*)^*=(((y^*\ra x^*)\ra x^*)\ra x^*)^*$ \\
$\hspace*{4.35cm}$ $=(x\ra ((y^*\ra x^*)\ra x^*)^*)^*=(x\ra (y\Cap x))^*=(x\ra y)^*$. \\ 
$(3)$ Applying $(2)$ we have: \\
$\hspace*{2.00cm}$ $(x\Cap y)\Cap y=((x^*\ra y^*)\ra y^*)^*\Cap y=(y\ra (x^*\ra y^*)^*)^*\Cap y$ \\
$\hspace*{3.85cm}$ $=(y\ra (x^*\ra y^*)^*)^*=((x^*\ra y^*)\ra y^*)^*=x\Cap y$. \\
$(4)$ We have: \\
$\hspace*{1cm}$ $x\ra (y\ra z)=x\ra (z^*\ra x^*)=z^*\ra (x\ra y^*)=(x\ra y^*)^*\ra z=(x\odot y)\ra z$. \\
$(5)$ $x\le_Q y\ra z$ implies $x\le y\ra z$, so that $x\ra (y\ra z)=1$. 
Hence $(x\odot y)\ra z=1$, that is $x\odot y\le z$. \\
$(6)$ From $x\odot y\le z$ we have $(x\odot y)\ra z=1$, that is $(x\ra y^*)^*\ra z=1$. 
It follows that $z^*\ra (x\ra y^*)=1$, so that $x\ra (z^*\ra y^*)=1$, hence $x\ra (y\ra z)=1$ and so 
$x\le y\ra z$. \\
$(7)$ Since $x\ra y\le_Q x\ra y$, by $(5)$ we get $(x\ra y)\odot x\le y$. \\
$(8)$ From $x\le_Q y$ we get $y\ra z^*\le_Q x\ra z^*$ and $(x\ra z^*)^*\le_Q (y\ra z^*)^*$, that is 
$x\odot z\le_Q y\odot z$.
\end{proof}

The next example is derived from \cite[Ex. 6.9]{Ior31}. 

\begin{example} \label{roma-110} 
Let $X=\{0,a,b,c,1\}$ and let $(X,\ra,0,1)$ be the involutive BE algebra with $\ra$ and the corresponding 
operation $\Cap$ given in the following tables:  
\[
\begin{array}{c|ccccccc}
\ra & 0 & a & b & c & 1 \\ \hline
0   & 1 & 1 & 1 & 1 & 1 \\ 
a   & b & 1 & b & 1 & 1 \\ 
b   & a & 1 & 1 & 1 & 1 \\ 
c   & c & 1 & 1 & 1 & 1 \\
1   & 0 & a & b & c & 1  
\end{array}
\hspace{10mm}
\begin{array}{c|ccccccc}
\Cap & 0 & a & b & c & 1 \\ \hline
0    & 0 & 0 & 0 & 0 & 0 \\ 
a    & 0 & a & b & c & a \\ 
b    & 0 & 0 & b & c & b \\ 
c    & 0 & a & b & c & c \\
1    & 0 & a & b & c & 1 
\end{array}
\]
Then $(X,\ra,0,1)$ is an implicative-orthomodular algebra.  
We can see that $a=b\ra (b\Cap a)\neq 1=b\ra a$, hence axiom $(QW_1)$ is not satisfied.  
It follows that $X$ is not a quantum-Wajsberg algebra. 
\end{example}

\begin{theorem} \label{roma-120} An IOM algebra $(X,\ra,0,1)$ is a QW algebra if and only if it satisfies the following condition for all $x,y\in X$: \\ 
$(IOM_2)$ $(x\Cap y)\ra (y\Cap x)=1$. 
\end{theorem}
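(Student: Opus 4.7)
The plan is to reduce the theorem to the equivalence, inside an IOM algebra, of the axiom $(IOM_2)$ and the axiom $(QW_1)$. Indeed, Theorem \ref{roma-50} shows that being an IOM algebra is the same as satisfying $(QW_2)$, and the axiom $(QW)$ splits as the conjunction of $(QW_1)$ and $(QW_2)$, so knowing $(QW_1)$ on top of an IOM structure is exactly knowing that the algebra is a QW algebra.

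For the direction ``QW $\Rightarrow (IOM_2)$'', I would simply expand $(x\Cap y)\ra (y\Cap x)$ using Proposition \ref{qbe-20}$(7)$ (with $z:=y\Cap x$) to obtain $(y\ra x)\ra (y\ra (y\Cap x))$, then apply $(QW_1)$ to the inner subterm $y\ra (y\Cap x)$ to rewrite it as $y\ra x$; the whole expression then collapses to $(y\ra x)\ra (y\ra x)=1$.

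The real content is in the reverse direction. Assume $X$ is an IOM algebra satisfying $(IOM_2)$; the goal is to establish $(QW_1)$, i.e.\ $x\ra (x\Cap y)=x\ra y$. I would produce the two opposite inequalities separately. For the first, $x\Cap y\le_Q y$ holds in every IOM algebra by Proposition \ref{roma-80}$(4)$, so the $\le_Q$-monotonicity of $\ra$ in its second slot (Proposition \ref{roma-30}$(3)$) yields $x\ra (x\Cap y)\le_Q x\ra y$. For the second, apply $(IOM_2)$ with $x$ and $y$ swapped to get $(y\Cap x)\ra (x\Cap y)=1$, and rewrite the left side using Proposition \ref{qbe-20}$(7)$ as $(x\ra y)\ra (x\ra (x\Cap y))$; this supplies the reverse inequality $x\ra y\le x\ra (x\Cap y)$. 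Proposition \ref{roma-40}$(7)$ (``$a\le_Q b$ and $b\le a$ imply $a=b$'') then glues these into the desired equality.

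The main obstacle is that $(IOM_2)$ is stated as a $\le$-inequality rather than a $\le_Q$-inequality, so one cannot simply upgrade $(IOM_2)$ to $x\Cap y=y\Cap x$ and finish by Proposition \ref{roma-40}$(3)$. The trick is to harvest the $\le_Q$ direction from the intrinsic IOM structure (Propositions \ref{roma-80}$(4)$ and \ref{roma-30}$(3)$) and use $(IOM_2)$ only for the complementary $\le$ direction, then merge them through the mixed antisymmetry-style statement of Proposition \ref{roma-40}$(7)$.
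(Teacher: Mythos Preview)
Your proof is correct and follows essentially the same route as the paper: both directions reduce $(x\Cap y)\ra(y\Cap x)$ (resp.\ $(y\Cap x)\ra(x\Cap y)$) to $(y\ra x)\ra(y\ra(y\Cap x))$ (resp.\ $(x\ra y)\ra(x\ra(x\Cap y))$), then use $(QW_1)$ forward or combine $x\ra(x\Cap y)\le_Q x\ra y$ with the mixed antisymmetry of Proposition~\ref{roma-40}$(7)$ backward, invoking Theorem~\ref{roma-50} for $(QW_2)$. The only difference is cosmetic: you appeal directly to Proposition~\ref{qbe-20}$(7)$ for the rewriting, whereas the paper expands through negations and $(BE_4)$ to reach the same identity.
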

\begin{proof}
If $X$ is a QW algebra, using $(QW_1)$ we have: \\
$\hspace*{1.00cm}$ $(x\Cap y)\ra (y\Cap x)=(y\Cap x)^*\ra (x\Cap y)^*=(y\Cap x)^*\ra ((x^*\ra y^*)\ra y^*)$ \\ 
$\hspace*{4.00cm}$ $=(x^*\ra y^*)\ra ((y\Cap x)^*\ra y^*)=(y\ra x)\ra (y\ra (y\Cap x))$ \\
$\hspace*{4.00cm}$ $=(y\ra x)\ra (y\ra x)=1$, \\
hence condition $(IOM_2)$ is verified. 
Conversely, assume that $X$ is an IOM algebra satisfying condition $(IOM_2)$. 
Then we have: \\
$\hspace*{2.00cm}$ $1=(y\Cap x)\ra (x\Cap y)=(x\Cap y)^*\ra (y\Cap x)^*$ \\
$\hspace*{2.30cm}$ $=(x\Cap y)^*\ra ((y^*\ra x^*)\ra x^*)=(y^*\ra x^*)\ra ((x\Cap y)^*\ra x^*)$ \\
$\hspace*{2.30cm}$ $=(x\ra y)\ra (x\ra (x\Cap y))$, \\ 
hence $x\ra y\le x\ra (x\Cap y)$. 
On the other hand, $x\Cap y\le_Q y$ implies $x\ra (x\Cap y)\le_Q x\ra y$, and applying Proposition \ref{roma-40}$(7)$ 
we get $x\ra (x\Cap y)=x\ra y$, so that $X$ satisfies condition $(QW_1)$. 
Since by Theorem \ref{roma-50} condition $(QW_2)$ is also satisfied, it follows that $X$ is a QW algebra. 
\end{proof}

\begin{remark} \label{roma-130} Condition $(IOM_2)$ is equivalent to the following condition: \\
$(IOM_2^{'})$ $(x\Cup y)\ra (y\Cup x)=1$. \\
Indeed, if condition $(IOM_2)$ is satisfied for all $x,y\in X$, then 
$(x\Cup y)\ra (y\Cup x)=(x^*\Cap y^*)^*\ra (y^*\Cap x^*)^*=(y^*\Cap x^*)\ra (x^*\Cap y^*)=1$, hence condition 
$(IOM_2^{'})$ is satisfied. The converse follows similarly. 
\end{remark}

\begin{remark} \label{roma-140} 
According to Remark \ref{roma-20-10} and Proposition \ref{qbe-80}, any QW algebra is an IOM algebra satisfying 
the following property for all $x,y\in X$: \\ 
$(Prel)$ $(x\ra y)\Cup (y\ra x)=1$ $\hspace*{0.5cm}$ (prelinearity), \\ 
but the converse is not true.  
Indeed, the IOM algebra from Example \ref{roma-110} satisfies the prelinearity property, but it is not a QW algebra. 
\end{remark}

$\vspace*{1mm}$

\section{Filters and deductive systems in implicative-orthomodular algebras}

The ideals in QMV algebras have been introduced and studied  by R. Giuntini and S. Pulmannov\'a in \cite{Giunt7}, 
and these notions were also investigated in \cite{DvPu, DvPu1}. 
We defined and studied in \cite{Ciu79} the filters and deductive systems in quantum-Wajsberg algebras. 
In this section, we define the filters and deductive systems in implicative-orthomodular algebras, we study their 
properties, and we prove that any deductive system is a filter. 
We also define the maximal and strongly maximal filters, and we show that a strongly maximal filter is maximal. 
Furthermore, we define and characterize the commutative deductive systems, and for the case of quantum-Wajsberg algebras, we prove that any deductive system is commutative. 

\begin{definition} \label{foma-10}   
\emph{
A \emph{filter} of $X$ is a nonempty subset $F\subseteq X$ satisfying the folowing conditions: \\
$(F_1)$ $x,y\in F$ implies $x\odot y\in F;$ \\
$(F_2)$ $x\in F$, $y\in X$, $x\le_Q y$ imply $y\in F$.   
}
\end{definition}

Denote by $\mathcal{F}(X)$ the set of all filters of $X$.

\begin{proposition} \label{foma-20}   
A nonempty subset $F\subseteq X$ is a filter of $X$ if and only if it satisfies conditions $(F_1)$ and \\ 
$(F_3)$ $x\in F$, $y\in X$ imply $y\ra x\in F$. 
\end{proposition}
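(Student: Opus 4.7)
The plan is to verify the two implications separately, using axiom $(IOM)$ for the forward direction and Proposition~\ref{roma-30}$(2)$ (which identifies $y$ with $(y\ra x)\ra x$ whenever $x\le_Q y$) for the converse.

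For the forward direction, I would assume $F$ is a filter, so that $(F_1)$ and $(F_2)$ both hold, and fix $x\in F$ together with an arbitrary $y\in X$. The defining axiom $(IOM)$ gives $x\Cap(y\ra x)=x$, which is precisely the statement $x\le_Q y\ra x$. Applying $(F_2)$ to this inequality immediately yields $y\ra x\in F$, so $(F_3)$ holds.

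For the converse, I would assume $F\neq\emptyset$ satisfies $(F_1)$ and $(F_3)$, and take $x\in F$, $y\in X$ with $x\le_Q y$; the goal is $y\in F$. By Proposition~\ref{roma-30}$(2)$, the hypothesis $x\le_Q y$ forces $y=y\Cup x$, and unfolding the definition of $\Cup$ rewrites this as $y=(y\ra x)\ra x$. Applying $(F_3)$ with the element $x\in F$ and the choice $z:=y\ra x\in X$, I conclude $(y\ra x)\ra x\in F$, i.e., $y\in F$, which establishes $(F_2)$.

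I do not anticipate any serious obstacle. The crux is simply the observation that $(IOM)$ is literally the assertion $x\le_Q y\ra x$, so $(F_3)$ is exactly the restriction of $(F_2)$ to the special elements of the form $y\ra x$; conversely, Proposition~\ref{roma-30}$(2)$ shows that any $y$ lying above $x$ in the $\le_Q$-ordering is already expressible as $z\ra x$, so $(F_3)$ is strong enough to recover the full upward-closure property $(F_2)$.
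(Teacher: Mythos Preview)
Your proof is correct and follows essentially the same approach as the paper: the forward direction uses $x\le_Q y\ra x$ (i.e., axiom $(IOM)$) together with $(F_2)$, and the converse uses Proposition~\ref{roma-30}$(2)$ to rewrite $y$ as $(y\ra x)\ra x$ and then applies $(F_3)$. The only cosmetic difference is that you name the intermediate variable $z:=y\ra x$ explicitly, whereas the paper simply substitutes directly.
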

\begin{proof} 

We show that conditions $(F_2)$ and $(F_3)$ are equivalent. \\
Assume that $F$ satisfies condition $(F_2)$, and let $x\in F$ and $y\in X$. 
Since $x\le_Q y\ra x$, applying $(F_2)$ we get $y\ra x\in F$, hence condition $(F_3)$ is satisfied.
Conversely, assume that $F$ satisfies condition $(F_3)$, and let $x\in F$ and $y\in X$ such that $x\le_Q y$. 
By Proposition \ref{roma-30}$(2)$ and $(F_3)$, $y=y\Cup x=(y\ra x)\ra x\in F$, that is condition $(F_2)$ is verified.  
\end{proof}

\begin{definition} \label{foma-30}    
\emph{
A \emph{deductive system} of $X$ is a subset $F\subseteq X$ satisfying the following conditions: \\
$(DS_1)$ $1\in F;$ \\
$(DS_2)$ $x, x\ra y\in F$ implies $y\in F$. 
}
\end{definition}

Denote by $\mathcal{DS}(X)$ the set of all deductive systems of $X$. 
Obviously $\{1\},X\in \mathcal{F}(X), \mathcal{DS}(X)$. 

\begin{proposition} \label{foma-40}     
$\mathcal{DS}(X)\subseteq \mathcal{F}(X)$. 
\end{proposition}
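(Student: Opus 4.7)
The plan is to show, via Proposition \ref{foma-20}, that any deductive system $F$ of $X$ is a filter by verifying the pair of conditions $(F_1)$ and $(F_3)$. The set $F$ is automatically nonempty, since $1 \in F$ by $(DS_1)$.

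Condition $(F_3)$ is the easy half. Given $x \in F$ and $y \in X$, Lemma \ref{qbe-10}$(1)$ yields $x \ra (y \ra x) = 1$, hence $x \ra (y \ra x) \in F$ by $(DS_1)$; modus ponens $(DS_2)$ applied with $x \in F$ then gives $y \ra x \in F$.

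The main obstacle is verifying closure under $\odot$, condition $(F_1)$, since the element $x \odot y = (x \ra y^*)^*$ does not appear in the hypotheses and $\le_Q$ goes the wrong way for a direct use of $(F_2)$. The strategy is to apply $(DS_2)$ twice, using the involutive identities of Lemma \ref{qbe-10} to massage $x \odot y$ into a form where $x$ and $y$ can each serve as a premise. First, by Lemma \ref{qbe-10}$(2)$ we have $x \le (x \ra y^*) \ra y^*$, i.e., $x \ra ((x \ra y^*) \ra y^*) = 1 \in F$; combined with $x \in F$, the rule $(DS_2)$ produces $(x \ra y^*) \ra y^* \in F$. Next, Lemma \ref{qbe-10}$(3)$, applied with $x$ replaced by $x \ra y^*$, rewrites this element as $y \ra (x \ra y^*)^* = y \ra (x \odot y)$. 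Since $y \in F$, a second application of $(DS_2)$ delivers $x \odot y \in F$, which establishes $(F_1)$ and, together with $(F_3)$, completes the proof via Proposition \ref{foma-20}.
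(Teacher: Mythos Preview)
Your proof is correct and follows essentially the same approach as the paper's. The only cosmetic differences are that the paper verifies $(F_2)$ directly (via $x\le_Q y\Rightarrow x\le y$ from Proposition~\ref{qbe-20}$(4)$) rather than $(F_3)$ through Proposition~\ref{foma-20}, and for $(F_1)$ the paper invokes Lemma~\ref{qbe-10}$(8)$ with $z:=(x\ra y^*)^*$ to obtain $x\ra(y\ra(x\odot y))=1$ in one step, whereas you reach the same identity via Lemma~\ref{qbe-10}$(2)$ and $(3)$ before applying $(DS_2)$ twice.
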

\begin{proof}
Let $F\in \mathcal{DS}(X)$ and let $x,y\in F$. Since by $(DS_1)$, $1\in F$, it follows that $F$ is nonempty. 
By Lemma \ref{qbe-10}$(8)$ we have $x\ra (y\ra z)=(x\ra y^*)^*\ra z$, for all $x,y,z\in X$. 
Taking $z:=(x\ra y^*)^*$, we get $x\ra (y\ra (x\ra y^*)^*)=(x\ra y^*)^*\ra (x\ra y^*)^*=1\in F$. 
Since $x,y\in F$, by $(DS_2)$ we get $x\odot y=(x\ra y^*)^*\in F$, that is $(F_1)$. \\ 
Let $x,y\in F$ such that $x\le_Q y$. By Proposition \ref{qbe-20}$(4)$, we have $x\le y$, that is $x\ra y=1\in F$. 
Since $x\in F$, by $(DS_2)$ we get $y\in F$, hence condition $(F_2)$ is also satisfied. 
It follows that $F\in \mathcal{F}(X)$, hence $\mathcal{DS}(X)\subseteq \mathcal{F}(X)$.  
\end{proof}

We say that $F\in \mathcal{F}(X)$ is \emph{proper} if $F\neq X$. 

\begin{proposition} \label{foma-50}    
Let $F$ be a subset $F\subseteq X$. The following are equivalent: \\
$(a)$ $F\in \mathcal{DS}(X);$ \\
$(b)$ $F$ is nonempty and it satisfies conditions $(F_1)$ and \\
$(F_4)$ $x\in F$, $y\in X$, $x\le y$ imply $y\in F;$ \\
$(c)$ $F$ is nonempty and it satisfies conditions $(F_1)$ and \\
$(F_5)$ $x\in F$, $y\in X$ imply $x\Cup y\in F$. 
\end{proposition}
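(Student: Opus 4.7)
The plan is to prove the cyclic chain $(a)\Rightarrow(b)\Rightarrow(c)\Rightarrow(a)$, using the already-established fact that $\mathcal{DS}(X)\subseteq\mathcal{F}(X)$ (Proposition~\ref{foma-40}) and the elementary inequality $x\le (x\ra y)\ra y=x\Cup y$ from Lemma~\ref{qbe-10}$(2)$.

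For $(a)\Rightarrow(b)$, I would start from Proposition~\ref{foma-40}, which already gives $(F_1)$ and nonemptyness (via $1\in F$). For $(F_4)$, take $x\in F$, $y\in X$ with $x\le y$; by definition of $\le$ this means $x\ra y=1\in F$ (by $(DS_1)$), hence $(DS_2)$ applied to $x$ and $x\ra y$ gives $y\in F$.

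For $(b)\Rightarrow(c)$, the key observation is that Lemma~\ref{qbe-10}$(2)$ yields $x\le (x\ra y)\ra y=x\Cup y$. So for any $x\in F$ and $y\in X$, $(F_4)$ directly promotes this $\le$-comparison into $x\Cup y\in F$, giving $(F_5)$.

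For $(c)\Rightarrow(a)$, which I expect to be the main obstacle, I need to recover both $(DS_1)$ and $(DS_2)$ from $(F_1)$ and $(F_5)$. Nonemptyness supplies some $x\in F$; applying $(F_5)$ with $y:=1$ gives $x\Cup 1=(x\ra 1)\ra 1=1\ra 1=1\in F$, yielding $(DS_1)$. For $(DS_2)$, suppose $x, x\ra y\in F$. By $(F_1)$, $z:=(x\ra y)\odot x\in F$. Proposition~\ref{roma-100}$(7)$ gives $z\le y$ (ordinary $\le$, not $\le_Q$), so $z\ra y=1$. The slick step is then to compute $z\Cup y=(z\ra y)\ra y=1\ra y=y$, and $(F_5)$ applied to $z\in F$ delivers $y=z\Cup y\in F$. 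This bootstrap trick—turning a plain $\le$-inequality into membership via the identity $z\Cup y=y$ whenever $z\le y$—is the crucial point that makes $(F_5)$ strong enough to substitute for $(F_4)$ (which in turn substitutes for $(DS_2)$).
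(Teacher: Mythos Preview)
Your proof is correct and uses essentially the same ingredients as the paper's. The paper organizes the equivalences as $(a)\Leftrightarrow(b)$ and $(b)\Leftrightarrow(c)$ rather than your cyclic chain, but the arguments coincide: your $(c)\Rightarrow(a)$ simply fuses the paper's $(c)\Rightarrow(b)$ (namely, $z\le y$ forces $z\Cup y=y$) with its $(b)\Rightarrow(a)$ (namely, $(x\ra y)\odot x\le y$ together with upward closure). Your citation of Proposition~\ref{roma-100}$(7)$ for the inequality $(x\ra y)\odot x\le y$ is in fact cleaner than the paper's reference to Proposition~\ref{qbe-120}$(4)$, since the latter is stated for quantum-Wajsberg algebras while the ambient $X$ here is only assumed to be an IOM algebra.
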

\begin{proof}
$(a)\Rightarrow (b)$ Since $F\in \mathcal{DS}(X)$, by Proposition \ref{foma-40}, $F$ satisfies 
condition $(F_1)$. 
If $x\in F$, $y\in X$ such that $x\le y$, then $x\ra y=1\in F$. 
It follows that $y\in F$, so that condition $(F_4)$ is satisfied. \\
$(b)\Rightarrow (a)$ Since $F$ is nonempty, $1\in F$. 
Let $x\in F$, $y\in X$ such that $x\ra y\in F$. 
By $(F_1)$ we get $x\odot (x\ra y)\in F$, and applying Proposition \ref{qbe-120}$(4)$ we get $x\odot (x\ra y)\le y$. 
Using $(F_4)$ we have $y\in F$, thus $F\in \mathcal{DS}(X)$. \\ 
$(b)\Rightarrow (c)$ We have $x\ra (x\Cup y)=x\ra ((x\ra y)\ra y)=(x\ra y)\ra (x\ra y)=1$, hence 
$x\le x\Cup y$. By $(F_4)$ we get $x\Cup y\in F$, so that $F$ satisfies condition $(F_5)$. \\
$(c)\Rightarrow (b)$ Suppose that $x\in F$ implies $x\Cup y\in F$, and let $y\in X$ such that $x\le y$, 
thus $x\ra y=1$. We have $x\Cup y=(x\ra y)\ra y=y$ and $x\Cup y\in F$, hence $y\in F$, 
that is $(F_4)$ is verified. 
\end{proof}

\begin{lemma} \label{foma-60}    
If $F\in \mathcal{F}(X)$ or $F\in \mathcal{DS}(X)$, then $F$ is a subalgebra of $X$. 
\end{lemma}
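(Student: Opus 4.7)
The plan is to check the two defining conditions of a BE-algebra subalgebra, namely that $F$ contains the constant $1$ and is closed under $\ra$. By Proposition~\ref{foma-40}, every deductive system is already a filter, so both cases reduce to showing the claim for $F\in\mathcal{F}(X)$, after which the statement for $F\in\mathcal{DS}(X)$ follows automatically; the deductive-system case can also be read off more directly from $(DS_1)$ and $(DS_2)$ applied alongside the closure of $\odot$ established inside the proof of Proposition~\ref{foma-40}.

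First I would verify that $1\in F$. Since $F$ is nonempty by hypothesis, pick any $x\in F$; from Proposition~\ref{qbe-20}(5) we have $x\le_Q 1$, and then condition $(F_2)$ immediately gives $1\in F$. In the deductive-system case this is of course cleaner still, as $(DS_1)$ asserts $1\in F$ outright.

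Next I would verify closure under the implication $\ra$. By Proposition~\ref{foma-20} every filter satisfies condition $(F_3)$: if $x\in F$ and $y\in X$, then $y\ra x\in F$. Applied to $x,y\in F$, taking $y$ in the role of the ``$x$'' of $(F_3)$ and $x$ in the role of its ``$y$'', this yields $x\ra y\in F$. Hence $F$ is closed under $\ra$. If one wishes to record closure under the derived operation $\odot$ as well, this is exactly $(F_1)$, and in the deductive-system case it was already verified inside Proposition~\ref{foma-40}.

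The main conceptual point, rather than any genuine technical obstacle, is settling what ``subalgebra'' is supposed to mean here. In the involutive BE setting one might initially worry about closure under the unary $^*$, but a proper filter cannot contain $0$, so the intended notion must be the BE-algebra subalgebra in the basic signature $(\ra,1)$. Once that is granted, the two verifications above suffice, and nothing beyond the previously established conditions $(F_1)$--$(F_3)$ is required.
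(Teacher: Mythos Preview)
Your proof is correct and follows essentially the same line as the paper's. The paper argues directly from $x\le_Q y\ra x$ (the IOM axiom) together with $(F_2)$ to get $x\ra y,\,y\ra x\in F$, whereas you invoke the packaged form $(F_3)$ from Proposition~\ref{foma-20}; these are the same step. One cosmetic difference: the paper treats the filter and deductive-system cases separately (the latter via $x\ra(y\ra x)=1\in F$ and $(DS_2)$), while you simply reduce the deductive-system case to the filter case through Proposition~\ref{foma-40}, which is a clean shortcut. Your explicit remark about why ``subalgebra'' must mean the $(\ra,1)$-reduct, since a proper filter cannot contain $0$, is a useful clarification that the paper leaves implicit.
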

\begin{proof}
Let $F\in \mathcal{F}(X)$ and let $x,y\in F$. We have $x\le_Q y\ra x$ and $y\le_Q x\ra y$, hence by $(F_2)$ 
we get $x\ra y, y\ra x\in F$. 
If $F\in \mathcal{DS}(X)$ and $x,y\in F$, from $x\le_Q y\ra x$, $y\le_Q x\ra y$ we have $x\le y\ra x$ and 
$y\le x\ra y$, that is $x\ra (y\ra x)=y\ra (x\ra y)=1\in F$. Since $x,y\in F$, we get $x\ra y, y\ra x\in F$. 
Thus $F$ is a subalgebra of $X$. 
\end{proof}

\begin{example} \label{foma-70} 
Let $(X,\ra,0,1)$ be the implicative-orthomodular algebra from Example \ref{roma-110}.  
Then $\mathcal{DS}(X)=\{\{1\},X\}$ and $\mathcal{F}(X)=\{\{1\},\{a,1\},X\}$. 
\end{example}

If $Y\subseteq X$, the smallest filter of $X$ containing $Y$ (i.e. the intersection of all filters $F$ of $X$ 
such that $Y\subseteq F$) is called the \emph{filter generated} by $Y$ and 
it is denoted by $[Y)$. If $Y=\{x\}$ we write $[x)$ instead of $[\{x\})$ and $[x)$ is called a 
\emph{principal filter} of $X$. We can easily prove that: \\
$\hspace*{1cm}$ $[Y)=\{y\in X\mid y\ge_Q y_1\odot y_2\odot \cdots \odot y_n$, for some $n\ge 1$ and 
$y_1,y_2,\dots,y_n\in Y\}$ and \\
$\hspace*{1cm}$ $[x)=\{y\in X\mid y\ge_Q x^n$, for some $n\ge 1\}$, for any $x\in X$. \\
If $F\in \mathcal{F}(X)$ and $x\in X$, we also can show that: \\
$\hspace*{1cm}$ $F(x)=[F\cup \{x\})=\{y\in X\mid y\ge_Q f\odot x^n$, for some $n\ge 1$ and $f\in F\}$. 
Obviously, if $x\in F$, then $F(x)=F$. 

\begin{definition} \label{foma-80}     
\emph{
A filter $F\in \mathcal{F}(X)$ is said to be: \\
$(1)$ \emph{maximal} if it is proper and it is not contained in any other proper filter of $X;$ \\
$(2)$ \emph{strongly maximal} if, for all $x\in X$, $x\notin F$, there exists $n\ge 1$ such that $(x^n)^*\in F$. 
}
\end{definition}

\begin{proposition} \label{foma-90}   
If $F\in \mathcal{F}(X)$, the following are equivalent: \\
$(a)$ $F$ is maximal; \\
$(b)$ for any $x\notin F$, there exist $f\in F$ and $n\ge 1$ such that $f\odot x^n=0$. 
\end{proposition}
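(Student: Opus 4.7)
My plan is to exploit the explicit description of the filter $F(x)=[F\cup\{x\})$ recalled just before the statement, namely $F(x)=\{y\in X\mid y\ge_Q f\odot x^n\text{ for some }n\ge 1,\ f\in F\}$. The ambient facts I will use are that $0\le_Q y$ for every $y\in X$ (Proposition~\ref{qbe-20}(5)) and that $\le_Q$ is antisymmetric (Proposition~\ref{qbe-20}(2)). I will tacitly assume $F$ is proper throughout, since maximality requires properness.

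For $(a)\Rightarrow(b)$, I would fix $x\notin F$ and observe that $x\in F(x)$, so $F\subsetneq F(x)$. Maximality then forces $F(x)=X$, hence $0\in F(x)$. Applying the description of $F(x)$ gives $f\in F$ and $n\ge 1$ with $0\ge_Q f\odot x^n$. Combined with $0\le_Q f\odot x^n$ and antisymmetry of $\le_Q$, this yields $f\odot x^n=0$, as required.

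For $(b)\Rightarrow(a)$, I would argue by contradiction: suppose there exists a proper filter $G$ with $F\subsetneq G$, and pick $x\in G\setminus F$. Applying $(b)$ produces $f\in F\subseteq G$ and $n\ge 1$ with $f\odot x^n=0$. Iterating condition $(F_1)$ on $x\in G$ shows $x^n\in G$, and one more application of $(F_1)$ to $f,x^n\in G$ places $0=f\odot x^n$ in $G$. Since $0\le_Q y$ for every $y\in X$, condition $(F_2)$ now forces $G=X$, contradicting the properness of $G$. Hence no proper filter strictly contains $F$, so $F$ is maximal.

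The only delicate point I anticipate is keeping clear the distinction between $\le$ and $\le_Q$: both the filter condition $(F_2)$ and the generating description of $F(x)$ live at the $\le_Q$ level, so the antisymmetry of $\le_Q$ (rather than any extra lattice reasoning) is what allows us to collapse the inequality $f\odot x^n\le_Q 0$ into the equality $f\odot x^n=0$. Once this is noted, the argument is a direct unfolding of the generators of $F(x)$ and the filter axioms.
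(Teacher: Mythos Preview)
Your proposal is correct and follows essentially the same route as the paper's proof: both directions unfold the description of $F(x)$ to pass between maximality and the existence of $f\odot x^n=0$. You are simply more explicit in a few places (noting $x\in F(x)$ via $1\odot x=x$, invoking antisymmetry of $\le_Q$ to turn $f\odot x^n\le_Q 0$ into equality, and iterating $(F_1)$ to obtain $x^n\in G$), but the argument is the same.
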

\begin{proof}
$(a) \Rightarrow (b)$ Since $F$ is maximal and $x\notin F$, then $F(x)=[F\cup \{x\})=X$, hence $0\in F(x)$. 
It follows that there exist $f\in F$ and $n\ge 1$ such that $f\odot x^n\le_Q 0$, that is $f\odot x^n= 0$. \\
$(b) \Rightarrow (a)$ Let $F^{'}$ be a proper filter of $X$ such that $F\subsetneq  F^{'}$. 
Then there exists $x\in F^{'}$ such that $x\notin F$. 
It follows that there exist $f\in F$ and $n\ge 1$ such that $f\odot x^n=0$. Since $f,x\in F^{'}$ we get 
$0\in F^{'}$, that is $F^{'}=X$. Hence $F$ is a maximal filter of $X$. 
\end{proof}

\begin{proposition} \label{foma-100}     
Every strongly maximal filter of $X$ is maximal.
\end{proposition}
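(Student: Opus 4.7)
The plan is to apply the characterization of maximality established in Proposition \ref{foma-90}: a proper filter $F$ is maximal precisely when, for every $x\notin F$, there exist $f\in F$ and $n\ge 1$ with $f\odot x^n=0$. Given a strongly maximal (hence proper) filter $F$ and an arbitrary $x\notin F$, strong maximality itself supplies the required auxiliary data: there is some $n\ge 1$ with $(x^n)^*\in F$, and I will take $f:=(x^n)^*$ and that same exponent $n$.

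The only thing left to verify is the identity $(x^n)^*\odot x^n=0$. This is a general fact valid in any involutive BE algebra: using the definition $a\odot b=(a\ra b^*)^*$ together with involutivity, for any $y\in X$ one has
$$y^*\odot y=(y^*\ra y^{**})^*=(y^*\ra y^*)^*=1^*=0.$$
Applying this with $y:=x^n$ gives the desired equality. Thus the witnesses $f=(x^n)^*\in F$ and $n$ satisfy $f\odot x^n=0$, so condition $(b)$ of Proposition \ref{foma-90} holds for the arbitrary $x\notin F$, and that proposition immediately yields that $F$ is maximal.

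I do not expect any real obstacle in this argument; the substantive combinatorial content has already been absorbed into Proposition \ref{foma-90}. The only point worth flagging is the uniform choice $f=(x^n)^*$ paired with the very same exponent $n$ produced by strong maximality, which makes the whole verification collapse to the one-line identity $y^*\odot y=0$ in any involutive BE algebra — there is no need to invoke any of the deeper IOM-specific machinery, and in particular no appeal to the (IOM) axiom is required.
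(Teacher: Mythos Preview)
Your argument is correct and matches the paper's in substance. The paper works directly from the definition of maximality---taking any filter $F'\supsetneq F$, picking $x\in F'\setminus F$, and deriving $0=x^n\odot(x^n)^*\in F'$---rather than routing through Proposition~\ref{foma-90}, but this is just the $(b)\Rightarrow(a)$ direction of that proposition unwound inline, and the key step in both is the identity $y\odot y^*=0$ applied to $y=x^n$.
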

\begin{proof}
Let $F$ be a strongly maximal filter of $X$ and let $F^{'}\in \mathcal{F}(X)$ such that $F\subseteq F^{'}$. 
Suppose that there exists $x\in F^{'}$, $x\notin F$. It follows that there exists $n\ge 1$ such that $(x^n)^*\in F$. 
Since $x\in F^{'}$, we have $x^n\in F^{'}$, hence $0=x^n\odot (x^n)^*\in F^{'}$. 
Thus $F^{'}=X$, that is $F$ is maximal.
\end{proof}

\begin{proposition} \label{foma-110}    
If $\mathcal{F}(X)=\mathcal{DS}(X)$, then every maximal filter of $X$ is strongly maximal.  
\end{proposition}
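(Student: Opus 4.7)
The plan is to start with a maximal filter $F$ and an arbitrary $x \notin F$, and to use Proposition \ref{foma-90} to extract $f \in F$ and $n \ge 1$ with $f \odot x^n = 0$. From here I would translate this zero equation into the $\le$-inequality $f \le (x^n)^{*}$, since $f \odot x^n = 0$ means $(f \ra (x^n)^{*})^{*} = 0$, equivalently $f \ra (x^n)^{*} = 1$.

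The next step is to move from $f \le (x^n)^{*}$ to the conclusion $(x^n)^{*} \in F$. This is the only place where the hypothesis $\mathcal{F}(X) = \mathcal{DS}(X)$ is needed, and it is the main (minor) obstacle: the definition of filter only guarantees upward closure under $\le_Q$, which is in general strictly finer than $\le$, so from $f \le (x^n)^{*}$ alone one cannot conclude $(x^n)^{*} \in F$. However, the hypothesis forces $F$ to be a deductive system, so by the equivalent characterization $(F_4)$ in Proposition \ref{foma-50}, $f \in F$ together with $f \le (x^n)^{*}$ yields $(x^n)^{*} \in F$.

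Having found $n \ge 1$ with $(x^n)^{*} \in F$ for each $x \notin F$, the definition of strongly maximal filter (Definition \ref{foma-80}(2)) is verified, completing the argument.
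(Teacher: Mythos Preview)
Your proposal is correct and follows essentially the same route as the paper: obtain $f\odot x^n=0$ from maximality (the paper does this via $F(x)=X$ rather than citing Proposition~\ref{foma-90}, but these are equivalent), convert this to $f\le (x^n)^*$, and then invoke $(F_4)$ using the hypothesis $\mathcal{F}(X)=\mathcal{DS}(X)$. Your direct unpacking of $\odot$ to get $f\ra (x^n)^*=1$ is a slight variant of the paper's appeal to the implication ``$x\odot y\le z \Rightarrow x\le y\ra z$'', but the argument is the same in substance.
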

\begin{proof}
Let $X$ be a maximal filter of $X$ and let $x\in X$ such that $x\notin F$. 
Then $F(x)=X$, hence $0\in F(x)$. 
It follows that there exist $n\ge 1$ and $f\in F$ such that $f\odot x^n\le_Q 0$, so that $f\odot x^n\le 0$. 
By Proposition \ref{qbe-120}$(3)$, we have $f\le x^n\ra 0=(x^n)^*$. 
Since $F\in \mathcal{DS}(X)$, by $(F_4)$ we get $(x^n)^*\in F$, hence $F$ is strongly maximal. 
\end{proof}

\begin{definition} \label{foma-120}    
\emph{
A filter $F\in \mathcal{F}(X)$ is called \emph{commutative} if it satisfies the following condition 
for all $x,y\in X$: \\
$(CF)$ $y\ra x\in F$ implies $(x\Cup y)\ra x\in F$. 
}
\end{definition}

Since $\mathcal{DS}(X)\subseteq \mathcal{F}(X)$ (Proposition \ref{foma-40}), this definition is also valid for the case of deductive systems. 
Denote by $\mathcal{F}_c(X)$ the set of all commutative filters and by $\mathcal{DS}_c(X)$ the set of all 
commutative deductive systems of $X$. 

\begin{proposition} \label{foma-130}  
Let $F\subseteq X$. Then $F\in \mathcal{DS}_c(X)$ if and only if it satisfies the following conditions 
for all $x,y,z\in X$: \\
$(1)$ $1\in F;$ \\
$(2)$ $z\ra (y\ra x)\in F$ and $z\in F$ imply $(x\Cup y)\ra x\in F$.   
\end{proposition}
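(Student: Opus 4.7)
The plan is to prove both implications by choosing two convenient specializations of the variables in condition $(2)$, relying only on the basic BE-algebra identities $1\ra x=x$ and $x\ra 1=1$.

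For the forward implication, assume $F\in \mathcal{DS}_c(X)$. Then $(1)$ is just $(DS_1)$. For $(2)$, suppose $z\in F$ and $z\ra(y\ra x)\in F$; applying $(DS_2)$ gives $y\ra x\in F$, and then commutativity $(CF)$ yields $(x\Cup y)\ra x\in F$. This direction is essentially a restatement.

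For the backward implication, I will recover $(DS_2)$ and $(CF)$ as two separate instances of $(2)$. First, specializing $y:=1$, the hypothesis becomes $z\ra(1\ra x)=z\ra x\in F$, while the conclusion simplifies using $x\Cup 1=(x\ra 1)\ra 1=1\ra 1=1$, so $(x\Cup 1)\ra x=1\ra x=x$. Thus $(2)$ collapses to: $z, z\ra x\in F$ imply $x\in F$, which is $(DS_2)$. Combined with $(1)$, this shows $F\in \mathcal{DS}(X)$, and in particular $F\in \mathcal{F}(X)$ by Proposition \ref{foma-40}. Second, specializing $z:=1$, we have $z\in F$ by $(1)$ and $z\ra(y\ra x)=1\ra(y\ra x)=y\ra x$, so $(2)$ reads: $y\ra x\in F$ implies $(x\Cup y)\ra x\in F$, which is exactly $(CF)$. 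Hence $F$ is a commutative deductive system.

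There is no real technical obstacle here; the only insight needed is to spot the two substitutions $y:=1$ and $z:=1$, which respectively extract the modus ponens property and the commutativity property from the combined schema $(2)$.
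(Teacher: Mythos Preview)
Your proof is correct and essentially identical to the paper's: both directions are handled the same way, with the forward implication being immediate from $(DS_2)$ followed by $(CF)$, and the converse obtained by the two specializations $y:=1$ (recovering $(DS_2)$ via $(y\Cup 1)\ra y=y$) and $z:=1$ (recovering $(CF)$). The only cosmetic difference is that the paper renames variables when applying $(2)$, whereas you keep the names aligned with the schema.
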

\begin{proof}
Let $F\in \mathcal{DS}_c(X)$. Condition $(1)$ is satisfied by $(DS_1)$. 
Let $x,y,z\in F$ such that $z\ra (y\ra x)\in F$ and $z\in F$. 
By $(DS_2)$, we have $y\ra x\in F$, so that by $(CF)$, $(x\Cup y)\ra x\in F$. 
Conversely, let $F\subseteq X$ satisfying conditions $(1)$ and $(2)$. Obviously $1\in F$. 
Let $x,y\in F$ such that $x, x\ra y\in F$. Since $x\ra (1\ra y)=x\ra y\in F$, by $(2)$ we have $y=(y\Cup 1)\ra y\in F$. 
It follows that $F\in \mathcal{DS}(X)$. 
Let $x,y\in X$ such that $y\ra x\in F$. Since $1\ra (y\ra x)\in F$ and $1\in F$, by $(2)$ we get 
$(x\Cup y)\ra x\in F$, that is $(CF)$. Hence $F\in \mathcal{DS}_c(X)$. 
\end{proof}

\begin{proposition} \label{foma-140}  
Let $F\in \mathcal{DS}_c(X)$ and $E\in \mathcal{DS}(X)$ such that $F\subseteq E$. Then $E\in \mathcal{DS}_c(X)$.  
\end{proposition}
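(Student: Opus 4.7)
To show $E \in \mathcal{DS}_c(X)$ I verify condition $(CF)$ from Definition \ref{foma-120} for $E$: whenever $y \ra x \in E$, the element $(x \Cup y) \ra x$ must also lie in $E$. Fix such $x, y \in X$.

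The strategy is to produce a ``universal'' witness $w \in F$ which, combined with $y \ra x \in E$, forces $(x \Cup y) \ra x \in E$ via $(DS_2)$ applied to $E$. The natural candidate is
\[
w := (y \ra x) \ra ((x \Cup y) \ra x),
\]
which by the BE-axiom $(BE_4)$ equals $(x \Cup y) \ra ((y \ra x) \ra x) = (x \Cup y) \ra (y \Cup x)$. If $w \in F$, then $w \in E$ via $F \subseteq E$, and $(DS_2)$ applied to $y \ra x, w \in E$ immediately yields $(x \Cup y) \ra x \in E$. The proposition therefore reduces to the sub-claim: $(x \Cup y) \ra (y \Cup x) \in F$ for every $x, y \in X$ whenever $F$ is a commutative DS.

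For the sub-claim I would invoke the equivalent characterization of commutativity from Proposition \ref{foma-130}(2) --- namely, $z \in F$ together with $z \ra (y' \ra x') \in F$ implies $(x' \Cup y') \ra x' \in F$ --- with a suitable choice of $z, y', x'$ so that the conclusion matches the target $(x \Cup y) \ra (y \Cup x)$. A promising route is to take $x' := y \Cup x$ and select $y'$ so that $(y \Cup x) \Cup y' = x \Cup y$, guided by the $\Cup$-identity $(x \Cup y) \Cup (y \Cup z) = (x \Cup y) \Cup z$ from Proposition \ref{roma-80}(6), combined with the $\Cap/\Cup$ duality of Proposition \ref{qbe-20}(3) and the involutive identities of Lemma \ref{qbe-10}.

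The main obstacle is identifying the correct substitutions: naive choices such as $z := 1$, $(y', x') := (y, x)$ recover only the original conditional $(CF)$ rather than the unconditional sub-claim, because $\Cup$ is generally not commutative in an IOM algebra (Remark \ref{roma-130}). Success will hinge on a non-trivial rewriting of $w$ using $(BE_4)$ and the involutive identities. Once the sub-claim is established, the conclusion follows at once from $F \subseteq E$ and $(DS_2)$ applied to $E$, completing the verification that $E$ is commutative.
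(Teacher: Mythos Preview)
Your reduction to the sub-claim $w=(x\Cup y)\ra(y\Cup x)\in F$ is correct and would indeed finish the proof. The gap is in how you try to establish the sub-claim: you look for a $y'$ with $(y\Cup x)\Cup y'=x\Cup y$ \emph{exactly}, and no such identity is available in a general IOM algebra (recall $\Cup$ is not commutative here). What works instead is an inequality. Take $z:=1$, $x':=y\Cup x$, $y':=y$ in Proposition~\ref{foma-130}: since $y\ra(y\Cup x)=(y\ra x)\ra(y\ra x)=1\in F$, commutativity of $F$ gives $((y\Cup x)\Cup y)\ra(y\Cup x)\in F$. Now $x\le_Q y\Cup x$ (Proposition~\ref{roma-80}(4)) yields $x\Cup y\le_Q (y\Cup x)\Cup y$ by Proposition~\ref{roma-30}(4), hence $((y\Cup x)\Cup y)\ra(y\Cup x)\le_Q (x\Cup y)\ra(y\Cup x)=w$ by Proposition~\ref{roma-30}(3), and $(F_2)$ applied to $F$ puts $w\in F$.

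The paper's proof runs along the same rails but packages the steps slightly differently: setting $u:=y\ra x$, it obtains $((u\ra x)\Cup y)\ra(u\ra x)\in F\subseteq E$ from commutativity of $F$, rewrites this via $(BE_4)$ as $u\ra(((u\ra x)\Cup y)\ra x)$, applies $(DS_2)$ in $E$ with $u\in E$ to get $((u\ra x)\Cup y)\ra x\in E$, and only then uses the $\le_Q$-monotonicity $x\Cup y\le_Q (u\ra x)\Cup y$ together with $(F_2)$ in $E$. So both arguments pivot on the same substitution $(x',y')=(y\Cup x,\,y)$ and the same monotonicity; your route isolates $w\in F$ first, the paper's route defers the monotonicity to the end and works inside $E$. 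The missing idea in your proposal is precisely replacing the sought-for equality by the $\le_Q$-inequality and closing under $(F_2)$.
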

\begin{proof}
Let $x,y\in X$ such that $u=y\ra x\in E$. 
It follows that $y\ra (u\ra x)=y\ra ((y\ra x)\ra x)=(y\ra x)\ra (y\ra x)=1\in F$. 
Since $F$ is commutative, $((u\ra x)\Cup y)\ra (u\ra x)\in F$, and $F\subseteq E$ implies 
$((u\ra x)\Cup y)\ra (u\ra x)\in E$. 
It follows that $u\ra (((u\ra x)\Cup y)\ra x)\in E$. 
Since $u\in E$, using $(DS_2)$ we get $((u\ra x)\Cup y)\ra x\in E$.   
From $x\le_Q u\ra x$, we have $(u\ra x)\ra y\le_Q x\ra y$ and $(x\ra y)\ra y\le_Q ((u\ra x)\ra y)\ra y$, 
that is $x\Cup y\le_Q (u\ra x)\Cup y$. 
It follows that $((u\ra x)\Cup y)\ra x\le_Q (x\Cup y)\ra x$, hence $(x\Cup y)\ra x\in E$, so that by $(F_2)$,  
$E\in \mathcal{DS}_c(X)$. 
\end{proof}

\begin{corollary} \label{foma-150} 
$\{1\}\in \mathcal{DS}_c(X)$ if and only if $\mathcal{DS}(X)=\mathcal{DS}_c(X)$.  
\end{corollary}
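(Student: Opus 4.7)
The plan is to derive both implications essentially as direct consequences of Proposition \ref{foma-140}, noting that $\{1\}$ is always a deductive system of $X$ and is contained in every other deductive system.

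For the forward direction, I would first observe that $\{1\}$ is trivially a member of $\mathcal{DS}(X)$: condition $(DS_1)$ is immediate, and if $x, x\to y\in\{1\}$ then $x=1$ and $1\to y=y=1$, so $y\in\{1\}$. Now assume $\{1\}\in\mathcal{DS}_c(X)$, and let $E\in\mathcal{DS}(X)$. By $(DS_1)$ applied to $E$ we have $1\in E$, so $\{1\}\subseteq E$. Applying Proposition \ref{foma-140} with $F:=\{1\}$ and the given $E$ yields $E\in\mathcal{DS}_c(X)$. Hence $\mathcal{DS}(X)\subseteq\mathcal{DS}_c(X)$, and since the reverse inclusion holds by definition of commutative deductive system, we conclude $\mathcal{DS}(X)=\mathcal{DS}_c(X)$.

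For the converse direction, assume $\mathcal{DS}(X)=\mathcal{DS}_c(X)$. Since $\{1\}\in\mathcal{DS}(X)$ by the observation above, the assumed equality immediately gives $\{1\}\in\mathcal{DS}_c(X)$.

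There is no real obstacle here, since the whole content has already been packaged into Proposition \ref{foma-140}; the only care needed is the initial check that $\{1\}$ is indeed a deductive system, which is immediate from $(BE_3)$ (giving $1\to y=y$) and is the reason $\{1\}$ sits below every deductive system.
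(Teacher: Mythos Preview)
Your proof is correct and follows exactly the intended approach: the corollary is an immediate consequence of Proposition~\ref{foma-140} together with the fact that $\{1\}$ is the smallest deductive system of $X$ (which the paper records just after Definition~\ref{foma-30}). The paper leaves the proof of this corollary implicit for precisely this reason.
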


\begin{proposition} \label{foma-160} If $X$ is a QW algebra, then $\mathcal{DS}(X)=\mathcal{DS}_c(X)$.  
\end{proposition}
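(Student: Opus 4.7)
The plan is to reduce the claim to showing that the trivial deductive system $\{1\}$ is itself commutative, and then to exploit the identity $(x\Cup y)\ra x = y\ra x$ that holds in every QW algebra.

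First I would invoke Corollary \ref{foma-150}: it suffices to prove $\{1\}\in \mathcal{DS}_c(X)$. To confirm $\{1\}\in \mathcal{DS}(X)$, note $1\in\{1\}$ trivially, and if $x, x\ra y \in \{1\}$ then $x=1$, so $y = 1\ra y = x\ra y = 1 \in \{1\}$ by $(BE_3)$. So $\{1\}$ is indeed a deductive system.

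Next I would verify condition $(CF)$ for $F=\{1\}$. Suppose $y\ra x \in \{1\}$, i.e.\ $y\ra x = 1$. In a QW algebra, Proposition \ref{qbe-60}$(8)$ gives the identity
\[
(x\Cup y)\ra x = y\ra x.
\]
Substituting the hypothesis yields $(x\Cup y)\ra x = 1 \in \{1\}$, which is exactly condition $(CF)$. Hence $\{1\}\in \mathcal{DS}_c(X)$, and by Corollary \ref{foma-150} we conclude $\mathcal{DS}(X)=\mathcal{DS}_c(X)$.

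There is essentially no obstacle here: the real work was done in establishing Proposition \ref{qbe-60}$(8)$ for QW algebras and in proving the propagation Proposition \ref{foma-140} (which underlies Corollary \ref{foma-150}). The current statement is a clean application of those facts, once one observes that the QW identity $(x\Cup y)\ra x = y\ra x$ forces the commutativity condition to hold already at the level of the smallest deductive system.
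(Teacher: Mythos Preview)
Your proof is correct and rests on the same key identity the paper uses, namely $(x\Cup y)\ra x = y\ra x$ in any QW algebra. The only structural difference is that the paper applies this identity directly to an arbitrary $F\in\mathcal{DS}(X)$: since $(x\Cup y)\ra x$ and $y\ra x$ are literally the same element, $y\ra x\in F$ immediately gives $(x\Cup y)\ra x\in F$ for every deductive system $F$, with no need to single out $\{1\}$. Your detour through Corollary~\ref{foma-150} and Proposition~\ref{foma-140} is therefore harmless but unnecessary; once the identity is in hand, the propagation machinery adds nothing. (Incidentally, the paper derives the identity from $(QW_1)$ in the proof itself, whereas you cite it via Proposition~\ref{qbe-60}(8); either source is fine.)
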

\begin{proof}
Applying axiom $(QW_1)$, we get $(x\Cup y)\ra x=x^*\ra (x\Cup y)^*=x^*\ra (x^*\Cap y^*)=x^*\ra y^*=y\ra x$. 
Hence, for any $F\in \mathcal{DS}(X)$ we have $(x\Cup y)\ra x\in F$ if and only if $y\ra x\in F$, that is 
$F\in \mathcal{DS}_c(X)$. 
\end{proof}

\begin{example} \label{foma-170} 
\emph{
Let $(X,\ra,0,1)$ be an implicative-orthomodular algebra. 
A \emph{Bosbach state} on $X$ is a map $s:X\longrightarrow [0,1]$ satisfying the following conditions 
for all $x,y\in X$: \\
$(bs_1)$ $s(0)=0$ and $s(1)=1;$ \\
$(bs_2)$ $s(x)+s(x\ra y)=s(y)+ s(y\ra x)$. \\
Denote by $\mathcal{BS}(X)$ the set of all Bosbach states on $X$. 
For any $s\in \mathcal{BS}(X)$, the set $\Ker(s)=\{x\in X\mid s(x)=1\}$ is called the \emph{kernel} of $s$. 
If $x,x\ra y\in \Ker(s)$, by $(bs_2)$ we get $s(y)=s(y\ra x)=1$, hence $y\in \Ker(s)$. 
Since by $(bs_1)$, $1\in \Ker(s)$, it follows that $\Ker(s)\in \mathcal{DS}(X)$. 
Let $x,y\in X$ such that $y\ra x\in \Ker(s)$, that is $s(y\ra x)=1$. 
Using $(bs_2)$, we get $s(x\ra y)=1-s(x)+s(y)$ and $s(x)+s(x\ra (x\Cup y))=s(x\Cup y)+s((x\Cup y)\ra x)$. 
Since $x\ra (x\Cup y)=x\ra ((x\ra y)\ra y)=(x\ra y)\ra (x\ra y)=1$, we have $s(x\ra (x\Cup y))=1$. 
Moreover from $s(x\ra y)+s((x\ra y)\ra y)=s(y)+s(y\ra (x\ra y))$, we get 
$s(x\Cup y)=1-s(x\ra y)+s(y)=1-1+s(x)-s(y)+s(y)=s(x)$. 
It follows that $s((x\Cup y)\ra x)=s(x)+1-s(x\Cup y)=s(x)+1-s(x)=1$, hence $(x\Cup y)\ra x\in \Ker(s)$, that is 
$\Ker(s)\in \mathcal{DS}_c(X)$. 
}
\end{example}

$\vspace*{1mm}$

\section{Congruences in implicative-orthomodular algebras}

In this section, we introduce the notion of congruences determined by deductive systems of an 
implicative-orthomodular algebra $X$. 
We prove that any deductive system of $X$ induces a congruence on $X$, and conversely, for any congrence on $X$ 
we can define a deductive system. 
Furthermore, we define the quotient implicative-orthomodular algebra with respect to the congruence induced by a 
deductive system.
One of the main results consists of proving that a deductive system is commutative if and only if all deductive 
systems of the corresponding quotient algebra are commutative. 

\begin{definition} \label{cobe-10} Let $F\in \mathcal{F}(X)$ and let $x,y\in X$. Then $x\equiv_F y$ if and only if 
there exists $f\in X$ such that $x,y\le_Q f$ and $f\ra x, f\ra y\in F$. 
\end{definition}

\begin{proposition} \label{cobe-20} If $F\in \mathcal{DS}(X)$, the following are equivalent for all $x,y\in X$: \\
$(a)$ $x\equiv_F y;$ \\ 
$(b)$ there exist $f_1, f_2 \in F$ such that $x\le_Q f_1$, $y\le_Q f_2$ and $f_1\ra x=f_2\ra y;$ \\   
$(c)$ there exist $f_1, f_2 \in F$ such that $x\le_Q f_2\ra y$ and $y\le_Q f_1\ra x$. 
\end{proposition}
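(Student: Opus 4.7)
The plan is to establish the three equivalences cyclically as $(a)\Rightarrow(b)\Rightarrow(c)\Rightarrow(a)$, so that the substantive construction of a single element $f$ from a pair $(f_1,f_2)$ only has to be done once.

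For $(a)\Rightarrow(b)$, I would set $f_1:=f\ra x$ and $f_2:=f\ra y$, both of which lie in $F$ by hypothesis. The inequalities $x\le_Q f_1$ and $y\le_Q f_2$ are direct instances of axiom $(IOM)$. By Proposition \ref{roma-30}$(2)$, $x\le_Q f$ forces $f\Cup x=f$, so $f_1\ra x=(f\ra x)\ra x=f\Cup x=f$ and symmetrically $f_2\ra y=f$, yielding $f_1\ra x=f_2\ra y$. For $(b)\Rightarrow(c)$, using the same $f_1,f_2$, axiom $(IOM)$ gives $x\le_Q f_1\ra x$ and $y\le_Q f_2\ra y$, and substituting the common value $f_1\ra x=f_2\ra y$ produces exactly the inequalities required by $(c)$.

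The substantive direction is $(c)\Rightarrow(a)$. I would take $f:=f_1\ra x$; then $x\le_Q f$ by $(IOM)$ and $y\le_Q f$ by hypothesis. For the first filter membership, $f\ra x=(f_1\ra x)\ra x=f_1\Cup x$, and $(BE_4)$ gives $f_1\ra(f_1\Cup x)=(f_1\ra x)\ra(f_1\ra x)=1$, so $f_1\le f_1\Cup x$; since $f_1\in F$, Proposition \ref{foma-50} yields $f\ra x\in F$. The harder task is $f\ra y=(f_1\ra x)\ra y\in F$. My plan is: (i) extract $x\ra y\in F$ from $x\le_Q f_2\ra y$ by passing to $\le$ via Proposition \ref{qbe-20}$(4)$, applying $(BE_4)$ to get $f_2\le x\ra y$, and invoking $(F_4)$ on $f_2\in F$; (ii) combine $(f_1\ra x)\odot f_1\le x$ from Proposition \ref{roma-100}$(7)$ with antitonicity of $\ra$ in its first argument---a BE-algebra fact provable from $(BE_4)$ together with the identity $b\le b\Cup c$---to deduce $x\ra y\le ((f_1\ra x)\odot f_1)\ra y$, so that the right side lies in $F$ via $(F_4)$; (iii) rewrite this element as $f_1\ra((f_1\ra x)\ra y)$ using Proposition \ref{roma-100}$(4)$ and $(BE_4)$, then apply $(DS_2)$ with $f_1\in F$ to conclude that $(f_1\ra x)\ra y\in F$.

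The main obstacle is precisely this last three-step chain, in which the correct interplay of antitonicity of $\ra$, the adjoint-style estimate $(f_1\ra x)\odot f_1\le x$, and the deduction rule $(DS_2)$ must be arranged in the right order. The remaining steps amount to bookkeeping with identities from Lemma \ref{qbe-10} and Propositions \ref{roma-30}, \ref{roma-40}, and \ref{roma-100}.
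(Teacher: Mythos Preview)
Your cyclic scheme and your choice of witness $f:=f_1\ra x$ for $(c)\Rightarrow(a)$ are fine, and steps $(a)\Rightarrow(b)$, $(b)\Rightarrow(c)$, and parts (i), (iii) of $(c)\Rightarrow(a)$ are correct. The gap is in step (ii): the ``BE-algebra fact'' you invoke, namely that $a\le b$ implies $b\ra c\le a\ra c$, is \emph{false} in general IOM algebras. Your sketched justification (combine $a\le b$ with $b\le b\Cup c$) tacitly uses transitivity of $\le$, which fails here. In Example~\ref{roma-110} one has $a\le c$ (since $a\ra c=1$) but $c\ra b=1\not\le b=a\ra b$; in fact with $f_1=a$, $x=c$, $y=b$ one gets $x\ra y=1$ while $((f_1\ra x)\odot f_1)\ra y=(a\ra c)\ra(a\ra b)=b$, and $1\le b$ fails. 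So the inequality you need in (ii) simply does not hold, and the passage from $x\ra y\in F$ to $((f_1\ra x)\odot f_1)\ra y\in F$ via $(F_4)$ breaks down.

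The repair is to work with $\le_Q$, for which monotonicity \emph{is} available (Proposition~\ref{roma-30}$(3)$), and to use the hypothesis $x\le_Q f_2\ra y$ directly rather than the weakened consequence $x\ra y\in F$. From $x\le_Q f_2\ra y$ one obtains
\[
f_1\Cup x=(f_1\ra x)\ra x\ \le_Q\ (f_1\ra x)\ra(f_2\ra y)=f_2\ra\bigl((f_1\ra x)\ra y\bigr).
\]
Since $f_1\in F$ gives $f_1\Cup x\in F$ (Proposition~\ref{foma-50}), condition $(F_2)$ yields $f_2\ra((f_1\ra x)\ra y)\in F$, and then $(DS_2)$ with $f_2\in F$ gives $f\ra y=(f_1\ra x)\ra y\in F$. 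This is exactly how the paper handles the corresponding step in its proof of $(c)\Rightarrow(b)$; your detour through $x\ra y$ and $\le$-antitonicity is where the argument goes wrong.
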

\begin{proof}
$(a)\Rightarrow (b)$ Suppose that $x\equiv_F y$, that is there exists $f\in X$ such that $x,y\le_Q f$ 
and $f\ra x, f\ra y\in F$. By Proposition \ref{roma-30}$(2)$, $f=f\Cup x=f\Cup y$, and  
taking $f_1:=f\ra x$, $f_2:=f\ra y$, we have $x\le_Q f_1$ and $y\le_Q f_2$.   
Moreover $f_1\ra x=(f\ra x)\ra x=f\Cup x=f=f\Cup y=(f\ra y)\ra y=f_2\ra y$, hence condition $(b)$ is satisfied. \\
$(b)\Rightarrow (a)$ Assume that there exist $f_1, f_2 \in F$ such that $x\le_Q f_1$, $y\le_Q f_2$ 
and $f_1\ra x=f_2\ra y$. 
Taking $f:=f_1\ra x=f_2\ra y$, we have $x, y\le_Q f$. 
Moreover $f\ra x=(f_1\ra x)\ra x=f_1\Cup x=f_1\in F$ (since $x\le_Q f_1)$ and 
$f\ra y=(f_2\ra y)\ra y=f_2\Cup y=f_2\in F$ (since $y\le_Q f_2$). 
It follows that $x\equiv_F y$. \\
$(b)\Rightarrow (c)$ With $f_1, f_2$ from $(b)$ we have $x\le_Q f_1\ra x=f_2\ra y$ and 
$y\le_Q f_2\ra y=f_1\ra x$, hence condition $(c)$ is satisfied. \\
$(c)\Rightarrow (b)$ Assume that there exist $f_1, f_2 \in F$ such that $x\le_Q f_2\ra y$ and $y\le_Q f_1\ra x$.  
Taking $g_1:=f_1\Cup x$ and $g_2:=(f_1\ra x)\ra y$, we have $x\le_Q g_1$ and $y\le_Q g_2$.  
Since $F\in \mathcal{DS}(X)$ and $f_1\in F$, by Proposition \ref{foma-50} we get $g_1=f_1\Cup x\in F$. 
From $f_2\ra y\ge_Q x$ we have 
$f_2\ra g_2=f_2\ra ((f_1\ra x)\ra y)=(f_1\ra x)\ra (f_2\ra y)\ge_Q (f_1\ra x)\ra x=f_1\Cup x=g_1\in F$. 
Thus $f_2\ra g_2\in F$.  
Since $F\in \mathcal{DS}(X)$ and $f_2, f_2\ra g_2\in F$, we get $g_2 \in F$. 
Moreover $g_1\ra x=(f_1\Cup x)\ra x=f_1\ra x$ (by Proposition \ref{roma-40}$(9)$) and 
$g_2\ra y=((f_1\ra x)\ra y)\ra y=(f_1\ra x)\Cup y=f_1 \ra x$, since $y\le_Q f_1\ra x$.  
It follows that $g_1\ra x=g_2\ra y=f_1\ra x$, so that condition $(b)$ is verified with 
$g_1, g_2$ instead of $f_1, f_2$. 
\end{proof}

\begin{theorem} \label{cobe-30} Let $F\in \mathcal{DS}(X)$. The relation $\equiv_F$ is an equivalence relation 
on $X$. 
\end{theorem}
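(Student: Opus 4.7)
The plan is to verify the three defining properties of an equivalence relation on $X$: reflexivity, symmetry, and transitivity. The first two are essentially immediate from the shape of Definition \ref{cobe-10}, so the real work lies in transitivity.

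For reflexivity, given $x\in X$ I would take $f:=x$ as witness: $\le_Q$ is reflexive by Proposition \ref{qbe-20}(2), so $x\le_Q x$, and $f\ra x=x\ra x=1\in F$ because $F$ is a deductive system. Symmetry is immediate because the condition ``there exists $f$ with $x,y\le_Q f$ and $f\ra x,\,f\ra y\in F$'' is literally symmetric in $x$ and $y$, so the same witness works for both directions.

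For transitivity, suppose $x\equiv_F y$ and $y\equiv_F z$. Here I would pivot to the equivalent characterization $(c)$ of Proposition \ref{cobe-20}, which furnishes $f_1,f_2\in F$ with $x\le_Q f_2\ra y$ and $y\le_Q f_1\ra x$, and $g_1,g_2\in F$ with $y\le_Q g_2\ra z$ and $z\le_Q g_1\ra y$. Applying the monotonicity in Proposition \ref{roma-30}(3) to $y\le_Q g_2\ra z$ gives $f_2\ra y\le_Q f_2\ra(g_2\ra z)$, and transitivity of $\le_Q$ (Proposition \ref{roma-80}(7)) then yields $x\le_Q f_2\ra(g_2\ra z)=(f_2\odot g_2)\ra z$, the last equality being Proposition \ref{roma-100}(4). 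The symmetric chain produces $z\le_Q(g_1\odot f_1)\ra x$. Since $F$ is a filter (Proposition \ref{foma-40}) and therefore closed under $\odot$ by $(F_1)$, both $f_2\odot g_2$ and $g_1\odot f_1$ lie in $F$, so characterization $(c)$ is met for the pair $(x,z)$ and $x\equiv_F z$.

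The main obstacle is exactly the transitivity step, and it explains why Proposition \ref{cobe-20} is placed immediately before this theorem: one must fuse the two independent witness pairs $(f_1,f_2)$ and $(g_1,g_2)$ into a single pair for $(x,z)$, and the key move is collapsing a nested implication $f\ra(g\ra z)$ to $(f\odot g)\ra z$ and then invoking closure of $F$ under $\odot$.
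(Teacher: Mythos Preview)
Your argument is correct. Reflexivity and symmetry are handled exactly as in the paper (which dismisses them as easy), and your transitivity argument is sound: the chain $x\le_Q f_2\ra y\le_Q f_2\ra(g_2\ra z)=(f_2\odot g_2)\ra z$ uses precisely the monotonicity of Proposition~\ref{roma-30}(3), the transitivity from Proposition~\ref{roma-80}(7), and the identity of Proposition~\ref{roma-100}(4), after which closure of $F$ under $\odot$ and characterization~(c) finish the job.

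The paper follows a somewhat different route for transitivity. Rather than invoking characterization~(c) at the outset, it unpacks Definition~\ref{cobe-10} directly: it takes witnesses $f_1,f_2\in X$ with $x,y\le_Q f_1$, $y,z\le_Q f_2$ and $f_1\ra x,\,f_1\ra y,\,f_2\ra y,\,f_2\ra z\in F$, then sets $g_1:=(f_1\ra x)\odot(f_2\ra y)$ and $g_2:=(f_1\ra y)\odot(f_2\ra z)$ and proves $z\le_Q g_1\ra x$ and $x\le_Q g_2\ra z$ by an explicit computation of the form $g_1\ra x=(f_2\ra y)\ra(f_1\Cup x)=(f_2\ra y)\ra f_1\ge_Q(f_2\ra y)\ra y=f_2\Cup y=f_2\ge_Q z$, only then appealing to~(c). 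Your version is shorter and more transparent because you start from~(c) and work entirely with the abstract order-theoretic tools (monotonicity, transitivity of $\le_Q$, the exchange law $a\ra(b\ra c)=(a\odot b)\ra c$); the paper's version instead traces everything back through the $\Cup$-form of the definition. Both approaches hinge on the same two ingredients: Proposition~\ref{cobe-20}(c) and closure of $F$ under $\odot$.
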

\begin{proof}
We can easily show that the relation $\equiv_F$ is reflexive and symmetric. \\
Let $x,y,z\in F$ such that $x\equiv_F y$ and $y\equiv_F z$. We will show that $x\equiv_F z$. 
By definition, there exist $f_1, f_2 \in X$ such that $x,y\le_Q f_1$, $f_1\ra x, f_1\ra y\in F$ 
and $y,z\le_Q f_2$, $f_2\ra y, f_2\ra z\in F$. 
If $g_1:=(f_1\ra x)\odot (f_2\ra y)\in F$ and $g_2:=(f_1\ra y)\odot (f_2\ra z)\in F$ we prove that 
$x\le_Q g_2\ra z$ and $z\le_Q g_1\ra x$. 
Since $x\le_Q f_1$ implies $f_1\Cup x=f_1$, $f_1\ge_Q y$ implies 
$(f_2\ra y)\ra f_1\ge_Q (f_2\ra y)\ra y$, and $y\le_Q f_2$ implies $f_2\Cup y=f_2$, we have: \\
$\hspace*{2.00cm}$ $g_1\ra x=x^*\ra g_1^*=x^*\ra ((f_1\ra x)\odot (f_2\ra y))^*$ \\
$\hspace*{3.10cm}$ $=x^*\ra ((f_1\ra x)\ra (f_2\ra y)^*)$ \\
$\hspace*{3.10cm}$ $=x^*\ra ((f_2\ra y)\ra (f_1\ra x)^*)$ \\
$\hspace*{3.10cm}$ $=(f_2\ra y)\ra (x^*\ra (f_1\ra x)^*)$ \\
$\hspace*{3.10cm}$ $=(f_2\ra y)\ra ((f_1\ra x)\ra x)$ \\
$\hspace*{3.10cm}$ $=(f_2\ra y)\ra (f_1\Cup x)$ \\
$\hspace*{3.10cm}$ $=(f_2\ra y)\ra f_1\ge_Q (f_2\ra y)\ra y$ \\
$\hspace*{3.10cm}$ $=f_2\Cup y=f_2\ge_Q z$. \\
Similarly, since $z\le_Q f_2$ implies $f_2\Cup z=f_2$, $f_2\ge_Q y$ implies 
$(f_1\ra y)\ra f_2\ge_Q (f_1\ra y)\ra y$, and $y\le_Q f_1$ implies $f_1\Cup y=f_1$, we get: \\
$\hspace*{2.00cm}$ $g_2\ra z=z^*\ra g_2^*=z^*\ra ((f_1\ra y)\odot (f_2\ra z))^*$ \\
$\hspace*{3.10cm}$ $=z^*\ra ((f_1\ra y)\ra (f_2\ra z)^*)$ \\
$\hspace*{3.10cm}$ $=(f_1\ra y)\ra (z^*\ra (f_2\ra z)^*)$ \\
$\hspace*{3.10cm}$ $=(f_1\ra y)\ra ((f_2\ra z)\ra z)$ \\
$\hspace*{3.10cm}$ $=(f_1\ra y)\ra (f_2\Cup z)$ \\
$\hspace*{3.10cm}$ $=(f_1\ra y)\ra f_2\ge_Q (f_1\ra y)\ra y$ \\
$\hspace*{3.10cm}$ $=f_1\Cup y=f_1\ge_Q x$. \\
By Proposition \ref{cobe-20}$(c)$, it follows that $x\equiv_F z$, that is $\equiv_F$ is transitive, so that 
it is an equivalence relation on $X$. 
\end{proof}

\begin{proposition} \label{cobe-40} Let $F\in \mathcal{DS}(X)$ and let $x,y\in X$. 
If $x\equiv_F y$, then $x^*\equiv_F y^*$.  
\end{proposition}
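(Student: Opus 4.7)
The plan is to leverage the equivalent characterizations of $\equiv_F$ from Proposition~\ref{cobe-20}, unpacking the hypothesis via form $(b)$ and verifying the conclusion via form $(c)$ with the witnesses swapped.

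Given $x\equiv_F y$, Proposition~\ref{cobe-20}$(b)$ furnishes $f_1, f_2\in F$ with $x\le_Q f_1$, $y\le_Q f_2$, and a common value $f := f_1\ra x = f_2\ra y$. Applying Proposition~\ref{qbe-30}$(5)$ to $x\le_Q f_1$ yields $(f_1\ra x)\odot f_1 = x$, i.e., $f\odot f_1 = x$; symmetrically $f\odot f_2 = y$. These identities express both $x$ and $y$ as $\odot$-products involving the common $f$.

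The central computation is
\[
f_1\odot y \;=\; f_1\odot (f\odot f_2) \;=\; (f\odot f_1)\odot f_2 \;=\; x\odot f_2 \;\le_Q\; x,
\]
using associativity and commutativity of $\odot$ together with the fact that $a\odot b\le_Q a$ for all $a,b\in X$ (which is Proposition~\ref{roma-100}$(2)$ rewritten through $(a\ra b^*)^* = a\odot b$; equivalently, obtained by applying Proposition~\ref{roma-30}$(2)$ to the axiom $(IOM)$). The symmetric computation gives $f_2\odot x\le_Q y$. Proposition~\ref{roma-30}$(2)$, combined with the identity $(f_1\odot y)^* = f_1\ra y^*$, converts these into
\[
x^* \le_Q f_1\ra y^* \qquad\text{and}\qquad y^* \le_Q f_2\ra x^*.
\]

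Setting $g_1 := f_2$ and $g_2 := f_1$, both in $F$, these are exactly the witnesses demanded by Proposition~\ref{cobe-20}$(c)$ for $x^*\equiv_F y^*$. The main obstacle is spotting the role swap $g_1 = f_2$, $g_2 = f_1$ between the witnesses for $x\equiv_F y$ and those for $x^*\equiv_F y^*$, and invoking the identity $(f_1\ra x)\odot f_1 = x$ coming from the $\le_Q$-hypothesis; once these are in place the verification is a short algebraic rearrangement.
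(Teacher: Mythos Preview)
Your proof is correct and takes a genuinely different route from the paper's. The paper works directly from Definition~\ref{cobe-10}: starting from a single witness $f$ with $x,y\le_Q f$ and $f\ra x,f\ra y\in F$, it builds the element $g:=x\ra(f\odot y^*)$ and verifies, through two fairly long chains of inequalities (invoking Proposition~\ref{qbe-30}$(5)$,$(6)$ and several $\Cup$-identities), that $x^*,y^*\le_Q g$ and $g\ra x^*,g\ra y^*\in F$. By contrast, you exploit the equivalent forms of Proposition~\ref{cobe-20}: extracting via $(b)$ the common value $f=f_1\ra x=f_2\ra y$ and the factorizations $x=f\odot f_1$, $y=f\odot f_2$, a single associativity rearrangement yields $f_1\odot y=x\odot f_2\le_Q x$, and dualizing with Proposition~\ref{roma-30}$(2)$ immediately gives the $(c)$-form witnesses for $x^*\equiv_F y^*$ with $f_1,f_2$ swapped. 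Your argument is shorter and more conceptual, highlighting the underlying symmetry; the paper's construction, on the other hand, produces an explicit single witness $g$ in the sense of the original definition, which may be of independent interest.
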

\begin{proof}
Since $x\equiv_F y$, there exists $f\in X$ such that $x,y\le_Q f$ and $f\ra x, f\ra y\in F$. 
It follows that $x^*\ge_Q f^*$ and $y^*\ge_Q f^*$. 
Denote $g:=x\ra (f\odot y^*)$, so that $g\ge_Q x^*$. 
Obviously $y\le_Q f$ implies $f=f\Cup y$, and $f\odot y^*\le_Q y^*$ 
implies $y^*=y^*\Cup (f\odot y^*)$. Taking into consideration that $x\le_Q f$, we get: \\
$\hspace*{2.00cm}$ $g=x\ra (f\odot y^*)\ge_Q f\ra (f\odot y^*)=(f\Cup y)\ra (f\odot y^*)$ \\
$\hspace*{2.30cm}$ $=((f\ra y)\ra y)\ra (f\odot y^*)=(y^*\ra (f\ra y)^*)\ra (f\odot y^*)$ \\
$\hspace*{2.30cm}$ $=(y^*\ra (f\odot y^*))\ra (f\odot y^*)=y^*\Cup (f\odot y^*)=y^*$. \\
Hence $g\ge_Q y^*$. 
Moreover: \\
$\hspace*{2.00cm}$ $g\ra x^*=(x\ra (f\odot y^*))\ra x^*=((f\odot y^*)^*\ra x^*)\ra x^*$ \\
$\hspace*{3.30cm}$ $=(f\odot y^*)^*\Cup x^*=(f\ra y)\Cup x^*\in F$, \\
since $f\ra y\in F$, and $F\in \mathcal{DS}(X)$. \\
By Proposition \ref{qbe-30}$(6)$,$(5)$, $x\ra (f\odot y^*)=((f\ra y)\odot x)^*$ and $x\le_Q f$ implies 
$x=(f\ra x)\odot f$. 
Since $f\ge_Q y$ we get: \\  
$\hspace*{2.00cm}$ $(f\ra x)\odot (f\ra y)\odot f \ge_Q (f\ra x)\odot (f\ra y)\odot y$ and \\ 
$\hspace*{2.00cm}$ $((f\ra x)\odot (f\ra y)\odot f)^*\ra y^* 
                   \ge_Q ((f\ra x)\odot (f\ra y)\odot y)^*\ra y^*$. \\
It follows that: \\
$\hspace*{2.00cm}$ $g\ra y^*=(x\ra (f\odot y^*))\ra y^*=((f\ra y)\odot x)^*\ra y^*$ \\
$\hspace*{3.30cm}$ $=((f\ra y)\odot (f\ra x)\odot f)^*\ra y^*$ \\
$\hspace*{3.30cm}$ $\ge_Q ((f\ra x)\odot (f\ra y)\odot y)^*\ra y^*$ \\
$\hspace*{3.30cm}$ $=(((f\ra x)\odot (f\ra y))\ra y^*)\ra y^*$ \\
$\hspace*{3.30cm}$ $=((f\ra x)\odot (f\ra y))\Cup y^*\in F$, \\
since $(f\ra x)\odot (f\ra y)\in F$, and $F\in \mathcal{DS}(X)$. 
We proved that there exists $g\in X$ such that $x^*, y^*\le_Q g$, and 
$g\ra x^*, g\ra y^*\in F$, hence $x^*\equiv_F y^*$. 
\end{proof}

\begin{proposition} \label{cobe-50} Let $F\in \mathcal{DS}(X)$ and let $x,y\in X$. 
If $x\equiv_F y$ and $u\equiv_F v$, then $x\odot u\equiv_F y\odot v$.  
\end{proposition}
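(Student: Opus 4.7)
The plan is to exhibit a single explicit witness $h:=f\odot g$, where $f$ comes from the hypothesis $x\equiv_F y$ (so $x,y\le_Q f$ and $f\ra x,\,f\ra y\in F$) and $g$ comes from $u\equiv_F v$ (so $u,v\le_Q g$ and $g\ra u,\,g\ra v\in F$), and then to verify the two requirements of Definition~\ref{cobe-10} separately.

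For the upper-bound requirement, I apply the monotonicity of $\odot$ in $\le_Q$ from Proposition~\ref{roma-100}(8) twice together with the transitivity of $\le_Q$ from Proposition~\ref{roma-80}(7): from $x\le_Q f$ and $u\le_Q g$ one obtains $x\odot u\le_Q f\odot u\le_Q f\odot g=h$, and symmetrically $y\odot v\le_Q h$.

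The substantive step is to show that $h\ra(x\odot u)\in F$ and $h\ra(y\odot v)\in F$. The starting point is Proposition~\ref{qbe-30}(5), which converts the $\le_Q$-hypotheses into the exact identities $(f\ra x)\odot f=x$ and $(g\ra u)\odot g=u$. Using the commutativity and associativity of $\odot$ one then combines these into
\[
\bigl((f\ra x)\odot(g\ra u)\bigr)\odot(f\odot g)=x\odot u.
\]
The residuation-type implication of Proposition~\ref{roma-100}(6) applied to this equality gives $(f\ra x)\odot(g\ra u)\le(f\odot g)\ra(x\odot u)$. Since $F\in\mathcal{DS}(X)\subseteq\mathcal{F}(X)$ by Proposition~\ref{foma-40}, $F$ is closed under $\odot$, so $(f\ra x)\odot(g\ra u)\in F$; and since deductive systems are upward-closed under $\le$ by condition $(F_4)$ of Proposition~\ref{foma-50}, we conclude $(f\odot g)\ra(x\odot u)\in F$. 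The analogous argument with $f\ra y$ and $g\ra v$ yields $(f\odot g)\ra(y\odot v)\in F$, and Definition~\ref{cobe-10} then gives $x\odot u\equiv_F y\odot v$.

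The delicate point—and the main thing to get right—is the interplay between $\le$ and $\le_Q$. Residuation in an IOM algebra is a mixed-order statement (Proposition~\ref{roma-100}(5)--(6)) and, as can be checked on Example~\ref{roma-110}, the ordinary order $\le$ need not even be transitive in a general IOM algebra, so one cannot hope to push the argument through by $\odot$-monotonicity in $\le$ alone. The essential use of $\le_Q$ is via Proposition~\ref{qbe-30}(5), which produces an exact equality $(f\ra x)\odot f=x$ and therefore removes any need to chain $\le$-inequalities under $\odot$.
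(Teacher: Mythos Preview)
Your proof is correct and follows essentially the same approach as the paper: the same witness $h=f\odot g$, the same key identity $((f\ra x)\odot(g\ra u))\odot(f\odot g)=x\odot u$ obtained from Proposition~\ref{qbe-30}(5), and the same use of closure of $F$ under $\odot$ together with upward closure. The only cosmetic difference is that the paper unwinds the implication $((f\ra x)\odot(g\ra u))\ra(h\ra(x\odot u))$ explicitly to $1$ and invokes $(DS_2)$, whereas you phrase it via residuation (Proposition~\ref{roma-100}(6)) and $(F_4)$; these are equivalent formulations of the same step.
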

\begin{proof}
Since $x\equiv_F y$ and $u\equiv_F v$, there exist $f, g\in X$ such that $x,y\le_Q f$, $u,v\le_Q g$ and 
$f\ra x, f\ra y, g\ra u, g\ra v\in F$. 
Denote $h:=f\odot g$, and obviously $x\odot u, y\odot v\le_Q h$. 
Since the operation $\odot$ is commutative and associative, applying Proposition \ref{qbe-30}$(5)$, we get: \\ 
$\hspace*{1.50cm}$ $((f\ra x)\odot (g\ra u))\ra (h\ra (x\odot u))=$ \\
$\hspace*{4.00cm}$ $=((f\ra x)\odot (g\ra u))\ra ((f\odot g)\ra (x\odot u))$ \\
$\hspace*{4.00cm}$ $=(f\ra x)\odot (g\ra u)\ra ((f\odot g)\odot (x\odot u)^*)^*$ \\
$\hspace*{4.00cm}$ $=[(f\ra x)\odot (g\ra u)\odot (f\odot g)\odot (x\odot u)^*]^*$ \\
$\hspace*{4.00cm}$ $=[((f\ra x)\odot f)\odot ((g\ra u)\odot g)\odot (x\odot u)^*]^*$ \\
$\hspace*{4.00cm}$ $=[(x\odot u)\odot (x\odot u)^*]^*=(x\odot u)\ra (x\odot u)=1\in F$. \\                   
Taking into consideration that $(f\ra x)\odot (g\ra u)\in F$ and $F\in \mathcal{DS}(X)$, we get 
$h\ra (x\odot u)\in F$. 
We can prove similarly that $h\ra (y\odot v)\in F$, hence $x\odot u\equiv_F y\odot v$.  
\end{proof}

\begin{corollary} \label{cobe-60} For any $F\in \mathcal{DS}(X)$, $x\equiv_F y$ and $u\equiv_F v$ imply 
$x\ra u\equiv_F y\ra v$, $x\Cup u\equiv_F y\Cup v$ and $x\Cap u\equiv_F y\Cap v$. 
\end{corollary}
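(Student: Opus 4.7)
The plan is to derive all three compatibilities from the two compatibility results already established, namely Proposition \ref{cobe-40} (preservation of $\equiv_F$ by $^*$) and Proposition \ref{cobe-50} (preservation of $\equiv_F$ by $\odot$), together with the definitional identities $x\ra y=(x\odot y^*)^*$, $x\Cup y=(x\ra y)\ra y$ and $x\Cap y=(x^*\Cup y^*)^*$ from Section~2. Since $\equiv_F$ is an equivalence relation by Theorem \ref{cobe-30}, the three operations can be handled by reducing them to $\odot$ and $^*$.

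First I would handle $\ra$. Starting from $u\equiv_F v$, Proposition \ref{cobe-40} gives $u^*\equiv_F v^*$. Combined with $x\equiv_F y$, Proposition \ref{cobe-50} yields $x\odot u^*\equiv_F y\odot v^*$. A second application of Proposition \ref{cobe-40} gives $(x\odot u^*)^*\equiv_F (y\odot v^*)^*$, which is precisely $x\ra u\equiv_F y\ra v$.

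Next, for $\Cup$, I would write $x\Cup u=(x\ra u)\ra u$ and apply the $\ra$-compatibility just proved twice: first with the pairs $(x,y)$ and $(u,v)$ to obtain $x\ra u\equiv_F y\ra v$, then with the pairs $(x\ra u,\,y\ra v)$ and $(u,v)$ to obtain $(x\ra u)\ra u\equiv_F (y\ra v)\ra v$. For $\Cap$, I would use $x\Cap u=(x^*\Cup u^*)^*$: Proposition \ref{cobe-40} gives $x^*\equiv_F y^*$ and $u^*\equiv_F v^*$, the $\Cup$-compatibility gives $x^*\Cup u^*\equiv_F y^*\Cup v^*$, and a final application of Proposition \ref{cobe-40} produces $x\Cap u\equiv_F y\Cap v$.

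I do not expect any real obstacle: the statement is an immediate corollary in the literal sense, and the only thing to be careful about is the order of applications and the fact that $\Cup$ and $\Cap$ must be expanded through their defining identities so that the previously proved compatibilities for $\odot$ and $^*$ can be invoked. No further lemma from the paper is needed.
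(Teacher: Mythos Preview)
Your proposal is correct and follows essentially the same route as the paper: the paper also derives $\ra$-compatibility from Propositions \ref{cobe-40} and \ref{cobe-50} via $x\ra u=(x\odot u^*)^*$, then obtains $\Cup$-compatibility by applying the $\ra$-case to $(x\ra u)\ra u$, and finally gets $\Cap$-compatibility from $x\Cap u=(x^*\Cup u^*)^*$ and Proposition \ref{cobe-40}. Your write-up is slightly more explicit about the double application of the $\ra$-case for $\Cup$, but the arguments are the same.
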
 
\begin{proof}
Applying Proposition \ref{cobe-40} we have $u^*\equiv_F v^*$, while by Proposition \ref{cobe-50}, 
$x\equiv_F y$ and $u^*\equiv_F v^*$ imply $x\odot u^*\equiv_F y\odot v^*$. 
Using again Proposition \ref{cobe-40}, we get $(x\odot u^*)^*\equiv_F (y\odot v^*)^*$, 
that is $x\ra u\equiv_F y\ra v$. 
It follows that $(x\ra u)\ra u \equiv_F (y\ra v)\ra v$, that is $x\Cup u\equiv_F y\Cup v$. 
Since $x\Cap u=(x^*\Cup u^*)^*$ and $y\Cap v=(y^*\Cup v^*)^*$, using Proposition \ref{cobe-40} we get 
$x\Cap u\equiv_F y\Cap v$. 
\end{proof}

\begin{theorem} \label{cobe-70} For any $F\in \mathcal{DS}(X)$, there exists a congruence $\equiv_F$ 
on X such that $\{x\in X\mid x\equiv_F 1\}=F$. Conversely, given a congruence $\equiv$ on $X$, then 
$\{x\in X\mid x\equiv 1\}\in \mathcal{DS}(X)$. 
\end{theorem}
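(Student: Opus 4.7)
The plan is to treat the two directions separately, since nearly all the work has been done in the preceding lemmas and propositions. In the forward direction, given $F \in \mathcal{DS}(X)$, I would first invoke Theorem~\ref{cobe-30} to get that $\equiv_F$ is an equivalence relation on $X$. Compatibility with the fundamental operation $\ra$ (and derivatively with $^*$, $\Cap$, $\Cup$) then follows from Corollary~\ref{cobe-60} together with Proposition~\ref{cobe-40}, so $\equiv_F$ is a congruence on the involutive BE algebra $X$.

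The substantive step is showing $\{x \in X \mid x \equiv_F 1\} = F$. The inclusion $F \subseteq \{x \mid x \equiv_F 1\}$ is established by exhibiting $f = 1$ as a witness in Definition~\ref{cobe-10}: Proposition~\ref{qbe-20}(5) gives $x \le_Q 1$ for every $x$, and both $1 \ra x = x \in F$ and $1 \ra 1 = 1 \in F$ are immediate. For the reverse inclusion, the key observation will be that $1 \le_Q f$ forces $f = 1$; indeed, Proposition~\ref{qbe-20}(6) gives $1 \Cap f = f$, so the defining equation $1 = 1 \Cap f$ collapses to $f = 1$. Consequently, if $x \equiv_F 1$, any witnessing element $f$ must be $1$, whence $x = 1 \ra x = f \ra x \in F$.

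For the converse direction, given a congruence $\equiv$ on $X$, I would set $F := \{x \in X \mid x \equiv 1\}$ and verify the two axioms of Definition~\ref{foma-30}. Reflexivity of $\equiv$ yields $1 \equiv 1$, hence $(DS_1)$. For $(DS_2)$, if $x \equiv 1$ and $x \ra y \equiv 1$, then compatibility of $\equiv$ with $\ra$ gives $y = 1 \ra y \equiv x \ra y \equiv 1$, so $y \in F$.

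The only moment requiring any real thought is the reverse inclusion in the identification of the $1$-class, which rests entirely on the sharp maximality of $1$ with respect to $\le_Q$, namely that no element weakly exceeds $1$ in the order $\le_Q$ other than $1$ itself. Everything else amounts to reassembling the congruence machinery developed in Theorem~\ref{cobe-30}, Proposition~\ref{cobe-40}, Proposition~\ref{cobe-50}, and Corollary~\ref{cobe-60}.
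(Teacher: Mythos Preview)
Your proof is correct and follows the same overall strategy as the paper: invoke the congruence machinery (Theorem~\ref{cobe-30}, Proposition~\ref{cobe-40}, Corollary~\ref{cobe-60}) for the forward direction, then identify the $1$-class with $F$, and handle the converse by a direct verification. The tactical details differ in two places. For the inclusion $\{x \mid x \equiv_F 1\} \subseteq F$, the paper passes through the equivalent characterization of Proposition~\ref{cobe-20}$(b)$, obtaining $f_1, f_2 \in F$ with $f_1 \ra x = f_2 \ra 1 = 1$ and $x \le_Q f_1$, whence $x = f_1 \Cup x = f_1 \in F$; you instead work straight from Definition~\ref{cobe-10} and force the witness $f$ to equal $1$ via $1 \le_Q f$ and Proposition~\ref{qbe-20}$(6)$. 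Both arguments are short and equally valid. For the converse, the paper verifies conditions $(F_1)$ and $(F_5)$ (closure under $\odot$ and under $\Cup$) and then appeals to Proposition~\ref{foma-50} to conclude $F \in \mathcal{DS}(X)$, whereas you check $(DS_1)$ and $(DS_2)$ directly from the congruence compatibility of $\ra$. Your route here is a bit more economical, since it bypasses the filter characterization entirely and uses only transitivity of $\equiv$ and the identity $1 \ra y = y$.
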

\begin{proof}
Let $F\in \mathcal{DS}(X)$. According to Theorem \ref{cobe-30}, $\equiv_F$ is an equivalence relation on $X$. 
Since by Corollary \ref{cobe-60}, $x\equiv_F y$ and $u\equiv_F v$ imply $x\ra u\equiv_F y\ra v$, it follows that  
$\equiv_F$ is a congruence on $X$. 
If $x\in \{x\in X\mid x\equiv_F 1\}$, then by Proposition \ref{cobe-20}$(b)$, there exist $f, g\in F$ such that 
$x\le_Q f$, $1\le_Q g$ and $f\ra x=g \ra 1=1$. 
Since $x\le_Q f$, we have $f\Cup x=f$, hence 
$x=1\ra x=(f\ra x)\ra x=f\Cup x=f\in F$. 
Thus $\{x\in X\mid x\equiv_F 1\}\subseteq F$. 
Let $x\in F$. For $f:=x$, $g:=1$ we have $f,g \in F$, $x\le_Q f$, $1\le_Q g$, and $f\ra x=g\ra 1=1$. 
Hence $x\equiv_F 1$, so that $F\subseteq \{x\in X\mid x\equiv_F 1\}$. 
It follows that $\{x\in X\mid x\equiv_F 1\}= F$. \\ 
Conversely, let $\equiv$ be a congruence on $X$. For all $x,y,u,v\in X$ such that $x\equiv y$ and $u\equiv v$, 
we have $x\ra u\equiv y\ra v$ and $x^*\equiv y^*$.   
Let $F=\{x\in X\mid x\equiv 1\}$ and let $x,y\in F$. 
We can easily see that $x\odot y=(x\ra y^*)^*\equiv 1$, hence $x\odot y\in F$, that is condition $(F_1)$. 
Moreover, for any $x\in F$ and $y\in X$, from $x\equiv 1$ we get $x\Cup y=(x\ra y)\ra y\equiv y\ra y=1$. 
It follows that $x\Cup y\equiv 1$, hence $x\Cup y\in F$, thus condition $(F_5)$ is also verified. 
Finally, by Proposition \ref{foma-50}, we conclude that $F \in \mathcal{DS}(X)$.
\end{proof}

\noindent
The \emph{quotient IOM algebra} induced by $\equiv_F$ is denoted by $X/F$, while the equivalence class of $x\in X$ 
is denoted by $x/F$. 

\begin{proposition} \label{cobe-80} $F\in \mathcal{DS}_c(X)$ if and only if $\mathcal{DS}(X/F)=\mathcal{DS}_c(X/F)$.  
\end{proposition}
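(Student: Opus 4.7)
The plan is to reduce the equivalence to Corollary \ref{foma-150} applied to the quotient algebra $X/F$. That corollary says $\mathcal{DS}(X/F)=\mathcal{DS}_c(X/F)$ holds precisely when the trivial deductive system $\{1/F\}$ is commutative. So the strategy is to identify $F$ being commutative in $X$ with $\{1/F\}$ being commutative in $X/F$, via the bijection between $F$ and the class $1/F$ coming from Theorem \ref{cobe-70}.

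For the forward direction, I would assume $F\in\mathcal{DS}_c(X)$ and verify $\{1/F\}\in\mathcal{DS}_c(X/F)$. Take $x,y\in X$ with $(y/F)\ra(x/F)=1/F$ in $X/F$. By compatibility of $\ra$ with $\equiv_F$ (Corollary \ref{cobe-60}) this rewrites as $(y\ra x)/F = 1/F$, hence $y\ra x\equiv_F 1$, and by the second half of Theorem \ref{cobe-70} (i.e.\ $\{x\mid x\equiv_F 1\}=F$) we get $y\ra x\in F$. Commutativity of $F$ yields $(x\Cup y)\ra x\in F$, so $(x\Cup y)\ra x\equiv_F 1$, and passing back through the quotient operations this is exactly $((x/F)\Cup(y/F))\ra(x/F)=1/F$. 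Thus $\{1/F\}$ satisfies $(CF)$, and Corollary \ref{foma-150} applied to $X/F$ gives $\mathcal{DS}(X/F)=\mathcal{DS}_c(X/F)$.

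For the converse, assume $\mathcal{DS}(X/F)=\mathcal{DS}_c(X/F)$. Since $\{1/F\}\in\mathcal{DS}(X/F)$ trivially, this equality forces $\{1/F\}\in\mathcal{DS}_c(X/F)$. Now let $x,y\in X$ with $y\ra x\in F$. Then $y\ra x\equiv_F 1$, i.e.\ $(y/F)\ra(x/F)=1/F\in\{1/F\}$. Commutativity of $\{1/F\}$ in $X/F$ then gives $((x/F)\Cup(y/F))\ra(x/F)=1/F$, i.e.\ $(x\Cup y)\ra x\equiv_F 1$, and thus $(x\Cup y)\ra x\in F$. This is precisely $(CF)$ for $F$, so $F\in\mathcal{DS}_c(X)$.

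The argument is almost purely formal once the machinery is in place; the only step requiring care is the translation between conditions expressed in $X$ and those expressed in $X/F$, and this is fully underwritten by Corollary \ref{cobe-60} (compatibility of $\ra$ and $\Cup$ with $\equiv_F$) together with the identity $\{x\mid x\equiv_F 1\}=F$ from Theorem \ref{cobe-70}. I do not expect a genuine obstacle; the main thing to watch is invoking Corollary \ref{foma-150} for the algebra $X/F$ rather than for $X$.
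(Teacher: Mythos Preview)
Your proposal is correct and follows essentially the same approach as the paper: both directions go through showing that $F$ is commutative in $X$ exactly when $\{1/F\}$ is commutative in $X/F$, and then invoking Corollary~\ref{foma-150} for the quotient. If anything, you are slightly more explicit than the paper in citing Corollary~\ref{cobe-60} and Theorem~\ref{cobe-70} to justify the passage between conditions in $X$ and in $X/F$.
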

\begin{proof}
Let $F\in \mathcal{DS}_c(X)$ and let $x,y\in X$ such that $y/F\ra x/F \in \{1/F\}$, that is $y\ra x\in F$. 
Since $F$ is commutative, we have $(x\Cup y)\ra x\in F$. 
Thus $(x/F\Cup y/F)\ra x/F\in \{1/F\}$, so that $\{1/F\}\in \mathcal{DS}_c(X/F)$, and by Corollary \ref{foma-150}, 
we get $\mathcal{DS}(X/F)=\mathcal{DS}_c(X/F)$. 
Conversely assume that $\mathcal{DS}(X/F)=\mathcal{DS}_c(X/F)$, so that $\{1/F\}\in \mathcal{DS}_c(X/F)$. 
Let $x,y\in X$ such that $y\ra x\in F$, that is $y/F\ra x/F =1/F$. 
Since $1/F$ is commutative, it follows that $(x/F\Cup y/F)\ra x/F\in 1/F$. 
Hence $((x\Cup y)\ra x)/F= 1/F$, thus $(x\Cup y)\ra x\in F$, that is $F\in \mathcal{DS}_c(X)$. 
\end{proof}

$\vspace*{5mm}$

\section{Concluding remarks}

As we mentioned, the orthomodular algebras were introduced and studied by A. Iorgulescu in \cite{Ior31} as generalizations of quantum-MV algebras (QMV algebras for short). 
We redefined in \cite{Ciu78} the quantum-MV algebras as involutive BE algebras, by introducing the notion of quantum-Wajsberg algebras and we studied these structures.  
To be able to compare certain quantum structures, it is preferable that they have the same 
signature, which would also allow possible generalizations of the quantum structures.  
For this reason, in this paper, we redefined the orthomodular algebras, by defining the notion of implicative-orthomodular algebras. 
We characterized these algebras, and gave conditions for implicative-orthomodular algebras to be quantum-Wajsberg algebras. 
We proved that axioms (IOM) and (Pom) are equivalent, so that the implicative-orthomodular algebras are 
equivalent to orthomodular algebras. 
We defined the notions of filters, deductive systems, maximal, strongly maximal and commutative  
deductive systems in IOM algebras, as well as the congruences induced by deductive systems. 
We also defined the quotient IOM algebra with respect to a deductive system, proving that a deductive system is commutative if and only if all deductive systems of the corresponding quotient algebra are commutative. 
The reader can see that the filters correspond to q-ideals, while the deductive systems correspond to 
p-ideals defined in \cite{Giunt7} for the case of QMV algebras. \\
R. Giuntini introduced in \cite{Giunt4} the notion of a commutative center of QMV algebras, as the subset of 
those elements that commute with all its elements. 
The author proved that the commutative center of a QMV algebra is an MV algebra. 
As a further direction of research, one can define and study various centers of implicative-orthomodular algebras. 
R. Giuntini defined also the quasilinear and weakly linear QMV algebras (\cite{Giunt1, Giunt6}) and studied their properties. 
As another topic of research, we could investigate these notions in the case of implicative-orthomodular algebras.

$\vspace*{1mm}$
          
\begin{center}
\sc Acknowledgement 
\end{center}
The author is very grateful to the anonymous referees for their useful remarks and suggestions on the subject that helped improving the presentation.

$\vspace*{1mm}$

\end{document}